\def\@abssec#1{\vspace{.05in}\footnotesize \parindent .2in
{\bf #1. }\ignorespaces}
\newtheorem{theorem}{Theorem}[section]
\newtheorem{lemma}[theorem]{Lemma}
\newtheorem{proposition}[theorem]{Proposition}
\newtheorem{definition}[theorem]{Definition}
\newtheorem{remark}[theorem]{Remark}
\allowdisplaybreaks\numberwithin{equation}{section}
\title[Well-posedness for the 2D dynamics of dislocation densities]
{On the well-posedness of a 2D nonlinear and nonlocal system arising from the dislocation dynamics}
\author{Dong Li}
\address{Department of Mathematics, University of Iowa, 14 MacLean Hall, Iowa City, IA52242.}
\email{mpdongli@gmail.com}
\author{Changxing Miao}
\address{Institute of Applied Physics and Computational Mathematics, P.O. Box 8009, Beijing 100088, P.R. China.}
\email{miao\_{}changxing@iapcm.ac.cn}
\author{Liutang Xue}
\address{The Graduate School of China Academy of Engineering Physics, P.O. Box 2101, Beijing 100088, P.R. China.}
\email{xue\_{}lt@163.com}
\subjclass[2000]{54C70, 35L45, 35Q72, 74H20, 74H25.}
\keywords{Nonlinear transport equations, nonlocal transport equations, dissipation,
nonlocal modulus of continuity, maximum principle, dynamics of dislocation densities.}
\date{}
\begin{document}
%%%%%%%%%%%%%%%%%%%%%%%%%%%%%%%%%%%%%%%%%%%%%%%%%%%%%%%%%%%%%%%%%%%%%%%%%%%%%%%%%%%%%%%%%%%%%%%%%%%%
%abstract
\begin{abstract}
  In this paper we consider a 2D nonlinear and nonlocal model describing the dynamics of the dislocation densities.
We prove the local well-posedness of strong solution to this system in the suitable functional framework,
and we show the global well-posedness for some dissipative cases by the method of nonlocal maximum principle.
\end{abstract}

\maketitle
%%%%%%%%%%%%%%%%%%%%%%%%%%%%%%%%%%%%%%%%%%%%%%%%%%%%%%%%%%%%%%%%%%%%%%%%%%%%%%%%%%%%%%%%%%%%%%%%%%%

\section{Introduction}
\setcounter{section}{1}\setcounter{equation}{0}

In the materials science, dislocations are termed as certain defects shown by real crystals in the organization of their crystalline structure.
They were considered as the principal explanation of plastic deformation at the microscopic scale of materials.
Dislocations can move under the effect of an exterior stress. In a particular case where the defects are parallel line
in the three-dimensional space, dislocations can be viewed as points in a plane by considering their cross-sections.
These dislocations are called ``edge dislocations" which move in the direction of the ``Burgers vector" which has a fixed direction
(cf. \cite{HiLo} for more physical description).

In this paper we focus on the following nonlinear and nonlocal system on $\mathbb{R}^2$ which arise from the dislocation dynamics
\begin{equation}\label{eq 1}
\begin{cases}
  \partial_t \rho^+ + u \cdot\nabla \rho^+ + \kappa |D|^\alpha \rho^+ =0, \qquad \alpha\in ]0,2], \\
  \partial_t \rho^- - u \cdot\nabla \rho^- + \kappa |D|^\alpha \rho^- = 0, \\
  u=\big(\mathcal{R}_1^2 \mathcal{R}_2^2 (\rho^+ -\rho^-),0\big), \\
  \rho^+|_{t=0}=\rho^+_0,\quad \rho^-|_{t=0}=\rho^-_0,
\end{cases}
\end{equation}
where $\kappa\geq 0$ is the viscosity coefficient, $\mathcal{R}_i\triangleq \partial_i/|D|$ ($i=1,2$, $\partial_i\triangleq \partial_{x_i}$)
is the usual Riesz transform and $|D|^\alpha$ is defined via the Fourier transform
\begin{equation*}
  \widehat{|D|^\alpha f}(\zeta)= |\zeta|^\alpha \widehat{f}(\zeta).
\end{equation*}
The inviscid case (i.e. $\kappa=0$) of \eqref{eq 1} is the model introduced by I. Groma and P. Balogh in \cite{Gro97,GroBal99}
where they consider two types of dislocations in the plane $(x_1,x_2)$.
Typically for a given velocity field, the dislocations of type $(+)$ propagate in the direction $+b$, with $b=(1,0)$ the Burgers vector,
while those of type $(-)$ propagate in the direction $-b$.
The terms $\rho^\pm$ are the plastic deformations in the material. The velocity vector field $u$
is the shear stress in the material, which solves the equation of elasticity (cf. \cite[Section 2]{CEMR08}).
Another closely related physical quantities are the derivatives of $\rho^\pm$ in the $x_1$-direction
$\partial_1\rho^\pm$, denoting by $\theta^\pm$, which represent the dislocation densities of type $(\pm)$.
Physically, $\theta^\pm$ are non-negative functions.
In terms of $\theta^\pm$, one can also formally rewrite the system \eqref{eq 1} as follows
\begin{equation}\label{eq 2}
\begin{cases}
  \partial_t \theta^+ + \partial_1(u_1\, \theta^+)
  + \kappa |D|^\alpha \theta^+ =0, \qquad \alpha\in ]0,2], \\
  \partial_t \theta^- - \partial_1 (u_1\, \theta^-)
  + \kappa |D|^\alpha \theta^- = 0, \\
  u_1=\mathcal{R}_1 \mathcal{R}_2^2 |D|^{-1}(\theta^+-\theta^-), \\
  \theta^+|_{t=0}=\theta^+_0,\quad \theta^-|_{t=0}=\theta^-_0.
\end{cases}
\end{equation}

In \cite{CEMR08}, Cannone et al considered the inviscid system \eqref{eq 1} with the initial data
\begin{equation}\label{eq Data}
  \rho^\pm(t=0,x_1,x_2)=\rho^\pm_0(x_1,x_2)= \bar\rho^\pm_0(x_1,x_2) + L x_1,\quad L\geq 0,
\end{equation}
where $\bar\rho^\pm_0(x_1,x_2)=\rho^{\pm,per}_0(x_1,x_2)$ and $\rho^{\pm,per}$ is a $1$-periodic function in $x=(x_1,x_2)$,
and by exploiting a fundamental entropy estimate satisfied by the dislocation densities,
the authors can show the global existence of a weak solution.
In \cite{Elha}, El Hajj proved that the inviscid model \eqref{eq 1} has a unique local-in-time solution with the initial data
\eqref{eq Data} prescribed on $\mathbb{R}^2$ and $\bar\rho^\pm_0(x_1,x_2)\in C^r(\mathbb{R}^2)\cap L^p(\mathbb{R}^2)$
with $r>1$ and $p\in]1,\infty[$. Note that $L$ may be chosen large enough so that $\partial_1 \rho^\pm_0\geq 0$
and this property for $\partial_1 \rho^\pm$ can be satisfied up to some positive time $T$ depending on $L$ and the initial data. For the study of more general dynamics of dislocation lines, we also refer to the works of \cite{AHLM,BCLM} and references therein
for some existence and uniqueness results.

In this article, in contrast with \cite{Elha}, we start with studying the system \eqref{eq 2} about the dislocation densities,
and then from the relation between $\theta^\pm$ and $\rho^\pm$, we go back to the system \eqref{eq 1} to give the meaning. The first result is the local well-posedness of the solution to the system \eqref{eq 2}.
\begin{theorem}\label{thm local}
  Let $\kappa\geq 0$, $\alpha\in ]0,2]$, $p\in ]1,2[$, $m>2$ and $(\theta^+_0,\theta^-_0)\in H^m(\mathbb{R}^2)\cap L^p(\mathbb{R}^2)$
be composed of real scalar functions. Then there exists
$T>0$ depending only on $\|\theta^\pm_0\|_{H^m \cap L^p}$ such that the system \eqref{eq 2} has a unique solution
$(\theta^+,\theta^-)\in C([0,T];H^m\cap L^p)$. Moreover, we have
$(\theta^+,\theta^-)\in C^1([0,T]; H^{m_0})$ with $m_0=\min\{m-1,m-\alpha\}$.

Besides, let $T^*>0$ be the maximal existence time of $(\theta^+,\theta^-) \in C([0,T^*[;H^m\cap L^p)$, then if $T^*<\infty$,
we necessarily have
\begin{equation}
  \int_0^{T^*} \|(\theta^+,\theta^-)(t)\|_{L^\infty} \mathrm{d}t =\infty,
\end{equation}
where we have used the notation that $\|(f,g)\|_X\triangleq\|f\|_X+\|g\|_X$ for some $f,g\in X$.
\end{theorem}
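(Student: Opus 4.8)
The plan is to run the classical scheme for nonlinear nonlocal transport equations: regularize the system to get global approximate solutions, derive a priori bounds in $H^m\cap L^p$ that are uniform in the regularization and valid on a time interval depending only on $\|\theta^\pm_0\|_{H^m\cap L^p}$, pass to the limit by compactness, and then settle uniqueness, the time regularity, and the continuation criterion. Before anything I would record the two structural facts about $u_1=\mathcal{R}_1\mathcal{R}_2^2|D|^{-1}(\theta^+-\theta^-)$ that drive every estimate. First, its multiplier $-i\zeta_1\zeta_2^2/|\zeta|^4$ is homogeneous of degree $-1$, so at high frequencies $u_1$ is one derivative smoother than $\theta^\pm$, i.e. $\|u_1\|_{\dot H^{s+1}}\lesssim\|\theta^+-\theta^-\|_{\dot H^s}$, while $\partial_1 u_1=\mathcal{R}_1^2\mathcal{R}_2^2(\theta^+-\theta^-)$ and $\partial_2 u_1=\mathcal{R}_1\mathcal{R}_2^3(\theta^+-\theta^-)$ are zeroth-order Calder\'on--Zygmund operators of $\theta^+-\theta^-$. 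Second, the low frequencies of $|D|^{-1}$ are singular, so $u_1$ is a genuine function only because $p<2$: by Hardy--Littlewood--Sobolev and the $L^r$-boundedness of the Riesz transforms, $\|u_1\|_{L^q}\lesssim\|\theta^+-\theta^-\|_{L^p}$ with $\tfrac{1}{q}=\tfrac{1}{p}-\tfrac{1}{2}\in(0,\tfrac{1}{2})$, hence $q\in(2,\infty)$; this is exactly where $p\in(1,2)$ enters, and interpolating this $L^q$ bound against the $\dot H^{m+1}$ bound controls $\|u_1\|_{L^\infty}$. The dissipation is harmless throughout, since $\kappa\int J^m|D|^\alpha\theta\,J^m\theta=\kappa\||D|^{\alpha/2}J^m\theta\|_{L^2}^2\geq0$ and, by the Cordoba--Cordoba positivity inequality, $\kappa\int|D|^\alpha\theta\,|\theta|^{p-2}\theta\geq0$, so all bounds are uniform in $\kappa\geq0$.

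For existence I would introduce the Friedrichs-mollified system (a frequency truncation $J_\varepsilon$ inside the nonlinearity, with the velocity built from the truncated densities), which is an ODE with locally Lipschitz right-hand side in $H^m\cap L^p$ and is solved by Cauchy--Lipschitz. The heart of the matter is the uniform $H^m$ estimate. Applying $J^m\partial_1$ to the conservative term and pairing with $J^m\theta^+$, I would commute the full operator $J^m\partial_1$ past multiplication by $u_1$,
\begin{equation*}
\int J^m\partial_1(u_1\theta^+)\,J^m\theta^+\,dx=\int u_1\,\partial_1 J^m\theta^+\,J^m\theta^+\,dx+\int [J^m\partial_1,u_1]\theta^+\,J^m\theta^+\,dx .
\end{equation*}
The first integral equals $-\tfrac{1}{2}\int(\partial_1 u_1)(J^m\theta^+)^2\lesssim\|\partial_1 u_1\|_{L^\infty}\|\theta^+\|_{H^m}^2$, and for the commutator the Kato--Ponce estimate gives $\|[J^m\partial_1,u_1]\theta^+\|_{L^2}\lesssim\|\nabla u_1\|_{L^\infty}\|\theta^+\|_{H^m}+\|u_1\|_{\dot H^{m+1}}\|\theta^+\|_{L^\infty}$, the crucial point being that the term carrying all derivatives on $u_1$ is acceptable precisely because $\|u_1\|_{\dot H^{m+1}}\lesssim\|\theta^+-\theta^-\|_{\dot H^m}$; note that no $L^\infty$ norm of $\nabla\theta^\pm$ and no $H^{m+1}$ norm appears. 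Together with the analogous $L^p$ estimate (testing against $|\theta^+|^{p-2}\theta^+$ and integrating by parts), and with $\|\nabla u_1\|_{L^\infty}\lesssim\|(\theta^+,\theta^-)\|_{H^m}$ (Sobolev, $m>2$) and $\|u_1\|_{L^\infty}\lesssim\|(\theta^+,\theta^-)\|_{H^m\cap L^p}$, these close into $\tfrac{d}{dt}N\lesssim N^2$ for $N(t)=\|(\theta^+,\theta^-)(t)\|_{H^m\cap L^p}$, giving a uniform time $T\gtrsim 1/N(0)$. Passing $\varepsilon\to0$ by the Aubin--Lions lemma (using the equation to bound $\partial_t\theta^\pm$ in a lower norm) produces a solution in $C([0,T];H^m\cap L^p)$.

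Uniqueness I would obtain by an $L^2\cap L^p$ estimate for the difference $\delta^\pm$ of two solutions: the dangerous term $\int\delta u_1\,\partial_1\theta_2^+\,\delta^+$ is handled by H\"older with $\|\delta u_1\|_{L^q}\lesssim\|(\delta^+,\delta^-)\|_{L^p}$, while $\partial_1\delta u_1$ is bounded on $L^2$, and Gr\"onwall forces $\delta^\pm\equiv0$. The regularity $(\theta^+,\theta^-)\in C^1([0,T];H^{m_0})$ is then read off the equation: $u_1\theta^\pm\in H^m$ (Sobolev algebra, $m>2$) gives $\partial_1(u_1\theta^\pm)\in H^{m-1}$ and $\kappa|D|^\alpha\theta^\pm\in H^{m-\alpha}$, so $\partial_t\theta^\pm\in H^{m_0}$ with $m_0=\min\{m-1,m-\alpha\}$, and time continuity follows from the continuity of $\theta^\pm$ in $H^m$.

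Finally, for the continuation criterion I would sharpen the $H^m$ estimate so that only $\|(\theta^+,\theta^-)\|_{L^\infty}$ enters at leading order. Keeping the $\kappa$-terms with their good sign and using the same commutator identity, everything reduces to controlling $\|\nabla u_1\|_{L^\infty}$; since $\nabla u_1$ is a zeroth-order Calder\'on--Zygmund operator of $\theta^+-\theta^-$, I would invoke the logarithmic Sobolev (Kozono--Taniuchi / Brezis--Gallouet--Wainger) inequality
\begin{equation*}
\|\nabla u_1\|_{L^\infty}\lesssim 1+\|(\theta^+,\theta^-)\|_{L^\infty}\log\!\big(e+\|(\theta^+,\theta^-)\|_{H^m}\big)+\|(\theta^+,\theta^-)\|_{L^p}.
\end{equation*}
This yields $\tfrac{d}{dt}\|(\theta^+,\theta^-)\|_{H^m}^2\lesssim\big(1+\|(\theta^+,\theta^-)\|_{L^\infty}\log(e+\|(\theta^+,\theta^-)\|_{H^m})+\|(\theta^+,\theta^-)\|_{L^p}\big)\|(\theta^+,\theta^-)\|_{H^m}^2$, and after first propagating $\|(\theta^+,\theta^-)\|_{L^p}$ by its own Gr\"onwall inequality, a logarithmic Gr\"onwall argument bounds $\|(\theta^+,\theta^-)(t)\|_{H^m}$ double-exponentially in $\int_0^t\|(\theta^+,\theta^-)\|_{L^\infty}$. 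Hence if $\int_0^{T^*}\|(\theta^+,\theta^-)\|_{L^\infty}\,dt<\infty$ the $H^m\cap L^p$ norm stays finite up to $T^*$, the solution extends past $T^*$, and maximality is contradicted. I expect this last step — producing an energy inequality whose top-order growth is governed solely by $\|(\theta^+,\theta^-)\|_{L^\infty}$ up to the logarithm, which hinges on the gain of one derivative in $u_1$ and on the logarithmic estimate for the zeroth-order operator — to be the main obstacle, with the low-frequency bookkeeping of $u_1$ through the $L^p$ norm the other delicate point.
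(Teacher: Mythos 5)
Your proposal is correct and follows essentially the same route as the paper: the one-derivative gain in $u_1$ combined with the Hardy--Littlewood--Sobolev control of its low frequencies (which is exactly where $p\in{]1,2[}$ enters) to close a priori estimates in $H^m\cap L^p$ on a time depending only on the data, uniqueness by a Gr\"onwall argument in a weak Lebesgue norm of the difference, and a logarithmic inequality for the zeroth-order operator $\nabla u_1$ to upgrade the blowup criterion to $\int_0^{T^*}\|(\theta^+,\theta^-)(t)\|_{L^\infty}\,\mathrm{d}t=\infty$. The only immaterial differences are that the paper constructs the solution by Picard iteration on linear transport-diffusion problems rather than by Friedrichs mollification with Aubin--Lions, and implements your Kato--Ponce commutator step through a Bony paraproduct estimate (its Lemma \ref{lem ExiKey}).
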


We also have some further properties of the solution.
\begin{proposition}\label{prop FurP}
  Let $\kappa\geq 0$, $\alpha\in ]0,2]$, $p\in ]1,2[$, $m>4$.
Suppose that
$(\theta^+,\theta^-)\in C([0,T^*[; H^m\cap L^p)$ is the corresponding maximal lifespan solution of the system \eqref{eq 2}
obtained in Theorem \ref{thm local}. Then the following statements hold true.
\begin{enumerate}[(1)]
  \item If $\theta^\pm_0$ are non-negative, then $\theta^\pm(t)$ are also non-negative for all $]0,T^*[$.

  \item Assume that $\kappa=0$ or $\kappa>0$ and $\alpha\in ]\frac{1}{2},2]$. If $\theta^\pm_0\in L^{\infty,1}_{x_2,x_1}(\mathbb{R}^2)$ (for
  definition see the next section) are non-negative, then
  $\theta^\pm\in L^\infty ([0,T^*[;L^{\infty,1}_{x_2,x_1})$ satisfies that
  \begin{equation}\label{eq MP}
    \|\theta^\pm(t)\|_{L^{\infty,1}_{x_2,x_1}}\leq \|\theta^\pm_0\|_{L^{\infty,1}_{x_2,x_1}},\quad \forall t\in [0,T^*[.
  \end{equation}
  Besides, the expression
  \begin{equation}\label{eq Exp}
    \rho^\pm(t,x_1,x_2)\triangleq \int_{-\infty}^{x_1} \theta^\pm(t,\tilde{x}_1,x_2)\mathrm{d}\tilde{x}_1,\quad
    \forall (t,x_1,x_2)\in [0,T^*[\times \mathbb{R}^2
  \end{equation}
  is well-defined and $\rho^\pm$ are the mild solutions to the system \eqref{eq 1}.

  \item If the conditions of (2) are supposed, and we moreover assume that for each $k=1,2,3$,
   $\partial_2^k \rho^\pm_0\in L_x^\infty(\mathbb{R}^2)$ and
   $\lim_{x_1\rightarrow-\infty}\partial_2^k \rho_0^\pm(x)=0$ for every $x_2\in\mathbb{R}$, then
   $$\rho^\pm\in L^\infty([0,T^*[, W^{3,\infty})\cap C([0,T^*[; W^{1,\infty}),$$ and
   $(\rho^+,\rho^-)$ satisfies the system \eqref{eq 1} in the classical pointwise sense.

  \item Under the assumption of (3), then for every $\epsilon>0$ and $t\in ]0,T^*[$, there exists $R>0$
  depending on $\kappa,\epsilon,t$ and $\|\theta^\pm\|_{L^\infty_t (H^m\cap L^p)}$ such that
  \begin{equation}\label{eq NRDecay}
    \|\nabla\rho^\pm\|_{L^\infty([0,t]; L^{\infty}_x (B_R^c))}\leq \|\nabla\rho^\pm_0\|_{L^\infty_x}+ \epsilon,
  \end{equation}
  where $B_R\triangleq \{x\in\mathbb{R}^2; \, |x|< R\}$ and $B_R^c$ is its complement.
\end{enumerate}

\end{proposition}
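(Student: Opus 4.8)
The plan is to treat the four items in order, since each builds on the previous. Throughout I write the drift form of the first equation in \eqref{eq 2} as $\partial_t\theta^+ + u_1\partial_1\theta^+ + (\partial_1 u_1)\theta^+ + \kappa|D|^\alpha\theta^+ = 0$, noting that the reaction coefficient $\partial_1 u_1 = \mathcal{R}_1^2\mathcal{R}_2^2(\theta^+-\theta^-)$ is a Calderón–Zygmund transform of $\theta^+-\theta^-\in H^m$, hence lies in $L^\infty_t L^\infty_x$ on every $[0,T]\subset[0,T^*[$ via $H^m\hookrightarrow L^\infty$. For (1) I would run a nonlocal maximum principle on $m(t):=\min_{x}\theta^+(t,x)$, which is attained and Lipschitz in $t$ because $\theta^+\in C([0,T];H^m)$ with $m>4$. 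At a point $x^*(t)$ realizing a negative minimum the transport term vanishes ($\partial_1\theta^+(x^*)=0$) and $|D|^\alpha\theta^+(x^*)\le 0$, so $-\kappa|D|^\alpha\theta^+(x^*)\ge 0$; hence $\frac{d}{dt}m(t)\ge -(\partial_1 u_1)(x^*)\,m(t)\ge -\|\partial_1 u_1\|_{L^\infty}|m(t)|$, and Gronwall forces $m(t)\ge 0$ whenever $m(0)\ge 0$. The same applies to $\theta^-$, giving item (1).

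For (2), positivity lets me set $M^\pm(t,x_2):=\int_{\mathbb{R}}\theta^\pm(t,x_1,x_2)\,\mathrm{d}x_1=\|\theta^\pm(t,\cdot,x_2)\|_{L^1_{x_1}}$. Integrating the equation in $x_1$, the conservative transport integrates to zero, while a Fubini computation on the principal value gives the key identity $\int_{\mathbb{R}}|D|^\alpha\theta^\pm\,\mathrm{d}x_1=|D_{x_2}|^\alpha M^\pm$ (the $x_1$–integration collapses the two–dimensional kernel $|z|^{-2-\alpha}$ to the one–dimensional $|z_2|^{-1-\alpha}$). Thus $\partial_t M^\pm+\kappa|D_{x_2}|^\alpha M^\pm=0$, so $M^\pm(t)=e^{-\kappa t|D_{x_2}|^\alpha}M^\pm_0$; since the $1$D fractional–heat kernel is a probability density, the semigroup contracts $L^\infty_{x_2}$ and yields \eqref{eq MP}. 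To show that $\rho^\pm$ from \eqref{eq Exp} is a mild solution of \eqref{eq 1}, I would apply $\partial_1^{-1}=\int_{-\infty}^{x_1}$ to the Duhamel formula for $\theta^\pm$: since $\partial_1^{-1}$ commutes with $e^{-\kappa(t-s)|D|^\alpha}$ as Fourier multipliers and $\partial_1^{-1}\partial_1(u_1\theta^\pm)=u_1\theta^\pm$ (valid because $u_1\theta^\pm\in L^1_{x_1}\to 0$ at $x_1\to-\infty$), the $\theta$–Duhamel identity turns into exactly the mild formulation for $\rho^\pm$. The restriction $\alpha>\tfrac12$ in the viscous case enters here, guaranteeing the integrability and regularity needed to give $|D|^\alpha\rho^\pm$ a meaning for the bounded, non–decaying front $\rho^\pm$ and to legitimize the interchange of $\partial_1^{-1}$ with the nonlocal semigroup; this is the delicate point of the item.

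For (3) the mixed derivatives $\partial_1^j\partial_2^k\rho^\pm$ with $j\ge1$ and $j+k\le4$ are derivatives of $\theta^\pm=\partial_1\rho^\pm\in H^m\hookrightarrow W^{3,\infty}$ (using $m>4$), so only the pure tangential derivatives $\partial_2^k\rho^\pm$, $k=1,2,3$, remain to be controlled. Differentiating the $\rho^+$ equation $k$ times in $x_2$ gives $\partial_t(\partial_2^k\rho^+)+u_1\partial_1(\partial_2^k\rho^+)+\kappa|D|^\alpha(\partial_2^k\rho^+)=-\sum_{j\ge1}\binom{k}{j}(\partial_2^j u_1)(\partial_2^{k-j}\theta^+)$, whose right–hand side is bounded in $L^\infty_t L^\infty_x$ because $\partial_2^j u_1=\mathcal{R}_1\mathcal{R}_2^2|D|^{-1}\partial_2^j(\theta^+-\theta^-)$ and $\partial_2^{k-j}\theta^+$ are controlled by $\|\theta^\pm\|_{H^m}$. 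A maximum–principle (or Duhamel) $L^\infty$ estimate with datum $\partial_2^k\rho^\pm_0\in L^\infty_x$ — the hypothesis $\lim_{x_1\to-\infty}\partial_2^k\rho^\pm_0=0$ fixing the antiderivative normalization — then propagates the bound, giving $\rho^\pm\in L^\infty_t W^{3,\infty}$; continuity in $C_t W^{1,\infty}$ follows from the equation and the continuity of $\theta^\pm$, and the gained regularity makes every term in \eqref{eq 1} continuous, so the system holds pointwise.

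Finally, for (4) I would split $\nabla\rho^\pm=(\theta^\pm,\partial_2\rho^\pm)$. The component $\theta^\pm\in L^\infty_t(H^m\cap L^p)$ is uniformly small on $B_R^c$ for large $R$. For $\partial_2\rho^\pm$ I would use its equation from (3) with forcing $-(\partial_2 u_1)\theta^\pm$, which is likewise concentrated near the origin, and track the tail $\|\partial_2\rho^\pm\|_{L^\infty([0,t];L^\infty(B_R^c))}$ via Duhamel, estimating how far the nonlocal semigroup $e^{-\kappa s|D|^\alpha}$ and the drift transport mass outward over the finite interval $[0,t]$. Choosing $R$ large depending on $\kappa,t,\epsilon$ and $\|\theta^\pm\|_{L^\infty_t(H^m\cap L^p)}$ absorbs this spreading into $\epsilon$ and yields \eqref{eq NRDecay}. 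The hardest step is this last uniform–in–time tail control: because $|D|^\alpha$ is nonlocal it instantaneously couples the far field to the concentrated near field, so keeping $\nabla\rho^\pm$ within $\epsilon$ of its initial size outside a single ball requires a quantitative estimate on the polynomially decaying tails of the fractional heat kernel, uniformly over $[0,t]$.
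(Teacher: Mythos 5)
Your treatment of items (1), (3) and (4) follows essentially the same route as the paper: (1) is the standard positivity/maximum-principle argument (the paper packages it as Lemma \ref{lem Pos}, but the content is the same evaluation at a negative minimum where $\nabla\theta^\pm=0$ and $-\kappa|D|^\alpha\theta^\pm\geq 0$); (3) is the Duhamel/ $L^\infty$ propagation of $\partial_2^k\rho^\pm$ with the commutator forcing bounded by $\|\theta^\pm\|_{H^m}$; and (4) is the split $\nabla\rho^\pm=(\theta^\pm,\partial_2\rho^\pm)$ with a tail estimate on the fractional heat kernel. These sketches are acceptable.

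Item (2), however, contains a genuine gap, and it sits exactly at the step you label as the ``key identity.'' You set $M^\pm(t,x_2)=\int_{\mathbb{R}}\theta^\pm(t,x_1,x_2)\,\mathrm{d}x_1$ and assert $\int_{\mathbb{R}}|D|^\alpha\theta^\pm\,\mathrm{d}x_1=|D_{x_2}|^\alpha M^\pm$ by Fubini. But for $t>0$ you only know $\theta^\pm(t)\in H^m\cap L^p$; neither $\theta^\pm(t,\cdot,x_2)$ nor $|D|^\alpha\theta^\pm(t,\cdot,x_2)$ is known to lie in $L^1_{x_1}$ --- indeed the finiteness of $M^\pm(t,\cdot)$ is part of what is being proved, so writing an evolution equation for $M^\pm$ and solving it by the semigroup $e^{-\kappa t|D_{x_2}|^\alpha}$ is circular. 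The paper avoids this by multiplying by a cutoff $\phi_R(x_1)$, deriving
\begin{equation*}
  \frac{d}{dt}\int_{\mathbb{R}}\theta^\pm\phi_R\,\mathrm{d}x_1
  +\kappa|D_2|^\alpha\Big(\int_{\mathbb{R}}\theta^\pm\phi_R\,\mathrm{d}x_1\Big)
  =\mathrm{I}^\pm+\mathrm{II}^\pm_2,
\end{equation*}
and bounding the commutator-type error $\mathrm{II}^\pm_2=-\kappa\int(|D|^\alpha-|D_2|^\alpha)\theta^\pm\,\phi_R\,\mathrm{d}x_1$ by $C\kappa R^{-(\alpha-\frac12)}\|\theta^\pm\|_{L^\infty_T H^m}$; only then does monotone convergence (which is where the non-negativity of $\theta^\pm$ is used) give \eqref{eq MP}. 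This is precisely where the hypothesis $\alpha>\tfrac12$ enters: for $\alpha\leq\tfrac12$ the error does not vanish as $R\to\infty$. Your proposal misattributes the role of $\alpha>\tfrac12$ to giving $|D|^\alpha\rho^\pm$ a meaning, and as written your argument would ``prove'' (2) for all $\alpha\in\,]0,2]$ with $\kappa>0$ --- more than the proposition claims --- which signals that the real difficulty has been skipped. A secondary (repairable) gap in the same item: the commutation of $\int_{-\infty}^{x_1}$ with $e^{-\kappa(t-\tau)|D|^\alpha}$ is not a Fourier-multiplier identity on the non-decaying function $\rho^\pm_0$; one must show the boundary terms $\lim_{\tilde x_1\to-\infty}e^{-\kappa t|D|^\alpha}\rho^\pm_0$ and the analogous Duhamel term vanish, which the paper does via the explicit kernel $K_\alpha\geq 0$ with decay \eqref{eq KerPro} and dominated convergence.
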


\begin{remark}
  Under the conditions of Proposition \ref{prop FurP}-(3), the corresponding solutions $\theta^\pm$ and $\rho^\pm$ are very locally well-posed,
and we only note that $\rho^\pm$ and $\partial_2\rho^\pm$ in general are bounded functions and don't satisfy the spatial decay property,
due to the physical constraint $\partial_1\rho^\pm\geq 0$.
Compared with those of \cite{Elha}, these initial data are of different type,
and they may have more advantage to guarantee the extension from the local solution to the global solution
(this can be convinced in some dissipative cases as follows).
We also notice that these assumptions can admit a large class of initial data, for instance,
the data of the form $\theta_0^\pm(x)=f^\pm(x_1) g^\pm(x_2)$
which satisfies that~$f^\pm\in H^m(\mathbb{R})\cap L^1(\mathbb{R})$,~$g^\pm\in H^m(\mathbb{R})\cap L^p(\mathbb{R})$~($m>4,p\in]1,2[$).~
\end{remark}

Next we shall consider the dissipative cases to show some global results.
From Theorem \ref{thm local}, in order to show the global well-posedness of the system \eqref{eq 2}, one should
prove that for every $T\in ]0,T^*[$, there is an upper bound of the quantity
$\int_0^T \|(\theta^+,\theta^-)(t)\|_{L^\infty}\mathrm{d}t $, or equivalently,
$\int_0^T \|(\partial_1\rho^+,\partial_1\rho^-)(t)\|_{L^\infty}\mathrm{d}t$. It seems very hard to obtain such a
bound directly from the system \eqref{eq 2}, thus here we shall turn to take advantage of the system \eqref{eq 1}
to give the desired bound.

Observe that for $\theta^-_0\equiv 0$, from the uniqueness issue in Theorem \ref{thm local}
and the fact that zero solution is a solution to the equation of $\theta^-$,
we have that $\theta^-(t)=\rho^-(t)\equiv 0$ for all $t\in[0,T^*[$.
By setting $\rho\triangleq \rho^+-\rho^-  =\rho^+$, we obtain
\begin{equation}\label{eq 4}
\begin{cases}
  \partial_t \rho + u\cdot\nabla \rho + \kappa |D|^\alpha \rho =0,  \quad \alpha\in ]0,2], \\
  u=(\mathcal{R}_1^2 \mathcal{R}_2^2\rho,0), \quad  \rho|_{t=0}=\rho_0.
\end{cases}
\end{equation}
The equation \eqref{eq 4} is reminiscent of the surface quasi-geostrophic (abbr. SQG) equation
\begin{equation}\label{eq SQG}
\begin{cases}
  \partial_t \rho + u\cdot\nabla \rho + \kappa |D|^\alpha \rho =0,\quad \alpha\in ]0,2], \\
  u=(-\mathcal{R}_2\rho,\mathcal{R}_1\rho), \quad  \rho|_{t=0}=\rho_0,
\end{cases}
\end{equation}
which arises from the geostrophic study of strongly rotating fluids (\cite{ConMT}) and has been intensely studied
in recent years (cf. \cite{CV,ChenMZ,ConWu99,CorC,Dab,KisNV,Kis,Res} and references therein).
For the dissipative (i.e. $\kappa>0$) SQG equation,
so far we only know that the cases of $\alpha\in[1,2]$ are global well-posed in various functional spaces,
and whether the supercritical cases of $\alpha\in]0,1[$ are global well-posed or not remains an outstanding open problem. We here briefly recall
some remarkable results. For the subcritical cases (i.e. $\alpha\in]1,2]$), it has been known that the SQG equation has
global strong solutions since the works \cite{Res} and \cite{ConWu99}. For the subtle critical case (i.e. $\alpha=1$),
the issue of global regularity was independently settled by \cite{KisNV} and \cite{CV} almost at the same time.
Kiselev et al in \cite{KisNV} proved the global well-posedness with the periodic smooth data by developing a new method
called the nonlocal maximum principle method, whose idea is to show that a family of suitable moduli of continuity are preserved by the evolution.
From a totally different direction, Caffarelli and Vasseur in \cite{CV}
established the global regularity of weak solutions by deeply exploiting the De Giorgi's iteration method.
We also refer to \cite{KisN} and \cite{ConV} for another two delicate and still quite different proofs of the same issue.

Compared to the SQG equation, the main disadvantage of the simplified model \eqref{eq 4} is that
the velocity field $u$ in \eqref{eq 4} is not divergence-free. This deficiency often leads to much difficulty in the application of the existing methods
(like Caffarelli-Vasseur's method), thus despite its possible advantage, we here do not expect to obtain better well-posed results than the SQG equation.
Hence, we hope that the coupling system \eqref{eq 1} in the cases of $\kappa>0$ (for brevity, setting $\kappa=1$) and
$\alpha\in [1,2]$ can generate a unique global strong solution and there is an upper bound
of the quantity $\int_0^T\|(\partial_1\rho^+,\partial_1\rho^-)(t)\|_{L^\infty}\mathrm{d}t$ for every $T\in ]0,T^*[$.
We find that the method of nonlocal maximum principle originated in \cite{KisNV} is not sensitive to the divergence-free condition of the velocity field,
and by applying this method, we indeed can prove the global results for the system \eqref{eq 1} in the cases $\alpha\in [1,2]$.
More precisely, we have
\begin{theorem}\label{thm glob}
  Let $\kappa=1$, $\alpha\in[1,2]$, $(\theta^+_0,\theta^-_0)$ be composed of non-negative real functions which belong to
$H^m(\mathbb{R}^2)\cap L^p(\mathbb{R}^2) \cap L^{\infty,1}_{x_2,x_1}(\mathbb{R}^2)$ with $m>4$, $p\in ]1,2[$.
Assume $\rho^\pm_0(x_1,x_2)=\int_{-\infty}^{x_1}\theta^\pm_0(\tilde{x}_1,x_2)\mathrm{d}
\tilde{x}_1$ satisfy that for each $k=1,2,3$, $\partial_2^k \rho^\pm_0\in L_x^\infty(\mathbb{R}^2)$ and
$\lim_{x_1\rightarrow-\infty}\partial_2^k \rho^\pm_0(x)=0$ for every $x_2\in\mathbb{R}$.
Then there exists a unique global solution
\begin{equation*}
  (\theta^+,\theta^-)\in C([0,\infty[;H^m\cap L^p)\cap L^\infty([0,\infty[;L^{\infty,1}_{x_2,x_1})
\end{equation*}
to the system \eqref{eq 2}. Moreover, $(\rho^+,\rho^-)\in L^\infty([0,\infty[; W^{3,\infty})\cap C([0,\infty[;W^{1,\infty})$ solves
the system \eqref{eq 1} in the classical pointwise sense.
\end{theorem}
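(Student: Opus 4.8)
\section*{Proof proposal for Theorem \ref{thm glob}}

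The plan is to promote the local solution of Theorem \ref{thm local} to a global one by producing an a priori bound that defeats the continuation criterion. Since $\theta^\pm=\partial_1\rho^\pm$ and Proposition \ref{prop FurP} guarantees that the $\rho^\pm$ solve \eqref{eq 1} in the classical pointwise sense with $\rho^\pm\in L^\infty_{loc}(W^{3,\infty})\cap C(W^{1,\infty})$, it suffices to bound $\int_0^T\|(\theta^+,\theta^-)(t)\|_{L^\infty}\mathrm{d}t$ on every $[0,T]\subset[0,T^*[$, and for this it is enough to control $\|\nabla\rho^\pm(t)\|_{L^\infty}$ uniformly in $t$, because $\|\theta^\pm\|_{L^\infty}=\|\partial_1\rho^\pm\|_{L^\infty}\le\|\nabla\rho^\pm\|_{L^\infty}$. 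First I would record the uniform amplitude bound $\|\rho^\pm(t)\|_{L^\infty}\le\|\theta^\pm_0\|_{L^{\infty,1}_{x_2,x_1}}=:M_0$, which follows from combining the maximum principle \eqref{eq MP} with the representation \eqref{eq Exp}, since $\rho^\pm(t,x_1,x_2)\le\int_{\mathbb{R}}\theta^\pm(t,\tilde x_1,x_2)\mathrm{d}\tilde x_1$.

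The Lipschitz bound I would obtain via the nonlocal maximum principle of Kiselev--Nazarov--Volberg \cite{KisNV}. I would fix a stationary, concave modulus of continuity $\omega$ with $\omega'(0)<\infty$ and prove that it is preserved by the flow: if both $\rho^\pm_0$ strictly obey $\omega$, then so do $\rho^\pm(t)$ for all $t\in[0,T^*[$. In the critical case $\alpha=1$ the system \eqref{eq 1} is invariant under $\rho^\pm(t,x)\mapsto\rho^\pm(\lambda t,\lambda x)$, so after rescaling with $\lambda$ small (which keeps $\|\rho^\pm_0\|_{L^\infty}$ fixed while shrinking the Lipschitz seminorm) the hypotheses $\rho^\pm_0\in W^{1,\infty}$ and $\|\rho^\pm_0\|_{L^\infty}\le M_0$ ensure that $\rho^\pm_0$ obey $\omega$, provided $\omega$ grows past $2M_0$; for $\alpha\in]1,2]$ the stronger, subcritical dissipation lets one run the same scheme directly. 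I would then argue by contradiction: were the modulus first violated at some $t_0$, continuity together with $\omega(\xi)>2M_0\ge|\rho^\pm(t_0,x)-\rho^\pm(t_0,y)|$ for large $\xi$ would force a touching pair $x\ne y$ at a finite distance $\xi=|x-y|$ with, say, $\rho^+(t_0,x)-\rho^+(t_0,y)=\omega(\xi)$, while both $\rho^\pm(t_0)$ still obey $\omega$ everywhere.

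At such a configuration the first-order conditions give $\nabla\rho^+(x)=\nabla\rho^+(y)=\omega'(\xi)\,\hat e$ with $\hat e=(x-y)/|x-y|$, so the equation for $\rho^+$ yields
\begin{equation*}
\frac{d}{dt}\big[\rho^+(t,x)-\rho^+(t,y)\big]\Big|_{t=t_0}
=-\,\omega'(\xi)\,\hat e_1\big(u_1(x)-u_1(y)\big)-\big(|D|^\alpha\rho^+(x)-|D|^\alpha\rho^+(y)\big).
\end{equation*}
For the advection term I would use $|u_1(x)-u_1(y)|\le\Omega(\xi)$, where $\Omega$ is the modulus of continuity produced by the zero-order Calder\'on--Zygmund operator $\mathcal{R}_1^2\mathcal{R}_2^2$ acting on $\rho^+-\rho^-$; the standard singular-integral lemma gives $\Omega(\xi)\le C\big(\omega(\xi)+\int_0^\xi\frac{\omega(\eta)}{\eta}\mathrm{d}\eta+\xi\int_\xi^\infty\frac{\omega(\eta)}{\eta^2}\mathrm{d}\eta\big)$, where the local term reflects the nonzero mean of the symbol $\zeta_1^2\zeta_2^2/|\zeta|^4$ and $\omega$ here stands for the modulus $2\omega$ of $\rho^+-\rho^-$. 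For the dissipation I would invoke the Córdoba--Córdoba pointwise inequality \cite{CorC} to obtain $|D|^\alpha\rho^+(x)-|D|^\alpha\rho^+(y)\ge D_\alpha[\omega](\xi)>0$. The contradiction then follows from the key structural inequality $\omega'(\xi)\,\Omega(\xi)<D_\alpha[\omega](\xi)$ for all $\xi>0$.

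The heart of the matter, and the step I expect to be the main obstacle, is constructing a modulus $\omega$ that makes this inequality hold in the borderline critical case $\alpha=1$, where advection and dissipation are homogeneous of the same degree; here I would take the explicit KNV modulus (linear with a $\xi^{3/2}$ correction near $0$, logarithmically growing for large $\xi$) and verify the inequality separately for small and large $\xi$, absorbing the harmless extra local contribution into the constants. For $\alpha\in]1,2]$ the dissipation dominates at small scales and the same scheme passes more easily. I would emphasize that the coupling is dealt with simply by letting both $\rho^\pm$ share one modulus (so $\rho^+-\rho^-$ obeys $2\omega$), and that the absence of the divergence-free condition on $u$ plays no role here, since only the increment $u_1(x)-u_1(y)$ enters the touching-point computation. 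Once $\omega$ is shown to be preserved it yields $\|\nabla\rho^\pm(t)\|_{L^\infty}\le\omega'(0)$ (up to the fixed rescaling factor) for all $t\in[0,T^*[$, whence $\int_0^T\|(\theta^+,\theta^-)\|_{L^\infty}\mathrm{d}t<\infty$ for every $T$; the criterion of Theorem \ref{thm local} forces $T^*=\infty$, and the regularity and classical-solvability statements follow from Proposition \ref{prop FurP} applied on each finite interval.
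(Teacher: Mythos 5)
Your overall strategy coincides with the paper's: reduce to a uniform-in-time Lipschitz bound on $\rho^\pm$ via the blowup criterion of Theorem \ref{thm local}, record the amplitude bound $\|\rho^\pm(t)\|_{L^\infty}\le\|\theta^\pm_0\|_{L^{\infty,1}_{x_2,x_1}}$, propagate a rescaled Kiselev--Nazarov--Volberg modulus shared by both components (so that $\rho^+-\rho^-$ obeys $2\omega$ and the touching-point computation closes), and use the $\xi^{3/2}$ correction near zero together with the logarithmic tail (for $\alpha=1$) or a truncated modulus (for $\alpha\in\,]1,2]$). The treatment of the advection term through the modulus $\Omega$ of $\mathcal{R}_1^2\mathcal{R}_2^2$, including the extra local term coming from the nonzero mean of the symbol, and the dissipation term through the K\"ordoba-type pointwise estimate, both match the paper's Lemmas \ref{lem MudRR} and \ref{lem MudDiss}.

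There is, however, one genuine gap, and it is exactly the point the authors single out as the main new difficulty of the whole-space, non-divergence-free setting. You write that at the first time $t_0$ of violation, ``continuity together with $\omega(\xi)>2M_0$ for large $\xi$ would force a touching pair $x\neq y$ at a finite distance.'' Bounding the distance $\xi=|x-y|$ from above (by the amplitude) and from below (by $\omega''(0+)=-\infty$ and Lemma \ref{lem MudLem}) is not enough: on $\mathbb{R}^2$ the near-maximizing pairs could escape to spatial infinity, in which case the supremum of $F^\pm(t_0,x,y)=|\rho^\pm(t_0,x)-\rho^\pm(t_0,y)|/\omega_\lambda(|x-y|)$ equals $1$ without being attained, and the pointwise ODE argument at a touching point never gets off the ground. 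In the periodic setting of \cite{KisNV} this is automatic by compactness, and in the whole-space SQG adaptations it is handled by the spatial decay of $\nabla\rho$; here $\rho^\pm$ is monotone nondecreasing in $x_1$ and does \emph{not} decay, so neither mechanism is available. The paper closes this hole with Proposition \ref{prop FurP}-(4): the estimate \eqref{eq NRDecay} shows that outside a large ball the Lipschitz constant of $\rho^\pm(t)$ stays within a factor of the initial one (this is \eqref{eq key1}), and with the choice of $\lambda$ in \eqref{eq lambda} this forces $F^\pm\le 1/2$ there, reducing the breakthrough analysis to a compact set of pairs $(x,y)$ (Lemma \ref{lem scenario}). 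Your proposal cites Proposition \ref{prop FurP} only for regularity and classical solvability, not for this localization, so as written the existence of the touching pair is unjustified. A secondary, minor omission: to get the contradiction from $(f^+)'(t_0)<0$ you should also note that both $\rho^+$ and $\rho^-$ may touch simultaneously at $t_0$ (the three scenarios of Lemma \ref{lem scenario}), though your argument applies verbatim to each component, so this is only a matter of bookkeeping.
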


Compared with the application of nonlocal-maximum-principle method to the SQG equation, there are another two noticeable different points:
the first is that what we considered here is a coupling system instead of a single equation, and
the second is that $(\rho^+,\rho^-)$ does not have the spatial decay property that
$\|(\nabla\rho^+,\nabla\rho^-)\|_{L^\infty([0,t]; L^\infty_x(B^c_R))}\rightarrow 0$ as $R\rightarrow \infty$ for each $t\in]0,T^*[$.
Notice that in the works \cite{AbidiH,DongD,MiaoX},
this spatial decay property is needed when applying the method of \cite{KisNV} to the whole-space SQG-type equation.
For the first point, we find that by proper modification in the scheme,
the nonlocal maximum principle method can still be suited to the system \eqref{eq 1}.
While for the second point, we observe that we indeed do not need such a strong decay property, and
what we need is that the Lipschitz norm of $(\rho^+,\rho^-)$ does not grow rapidly near infinity (cf. \eqref{eq key1}),
which just can be implied by Proposition \ref{prop FurP}-(4).

In the proof of Theorem \ref{thm glob}, Proposition \ref{prop FurP}-(2)(3) will also play an important role.
Since in the program of the nonlocal-maximum-principle method, we need that $(\rho^+,\rho^-)$ satisfies the system \eqref{eq 1} in the classical pointwise
sense and it also has sufficient smoothness property.

\begin{remark}\label{Rem1}
  From the direction of showing the regularity of weak solutions to the system \eqref{eq 1},
so far there is no direct result implying the global regularity,
due to that the velocity field $u=(\mathcal{R}_1^2\mathcal{R}_2^2(\rho^+-\rho^-),0)$ is neither divergence-free
nor belonging to $L^\infty_{t,x}$.
The main obstacle lies on the improvement from the bounded solution to the H\"older continuous solution;
as far as we know, the best result is as Silvestre \cite{Silv} shows, which calls for $u\in L^\infty_{t,x}$
to ensure that this improvement is satisfied for the drift-diffusion equation
$\partial_t \rho + u\cdot\nabla \rho + |D|^\alpha \rho =0$ with $\alpha\in[1,2[$ and general velocity field $u$.
\end{remark}

\begin{remark}
  The procedure in showing the global part of Theorem \ref{thm glob} can be applied to the Groma-Balogh model with
generalized dissipation, and we shall sketch it in the appendix.
\end{remark}

The paper is organized as follows. In Section \ref{sec prel}, we present some
preparatory results including some auxiliary lemmas and some facts about the modulus of continuity.
We show Theorem \ref{thm local}, Proposition \ref{prop FurP} and Theorem \ref{thm glob}
in Section \ref{sec local}--\ref{sec glob} respectively.

Throughout this paper, $C$ stands for a constant which may be different from line to line. For two quantities $X$ and $Y$, we
sometimes use $X\lesssim Y$ instead of $X\leq C Y$, and we use $X\approx Y$ if both $X\lesssim Y$ and $Y\lesssim X$ hold.
Denote $\widehat f$ the Fourier transform of $f$, i.e., $\widehat f(\zeta)=\int_{\mathbb{R}^2}e^{i x\cdot \zeta} f(x)\mathrm{d}\zeta$.

%%%%%%%%%%%%%%%%%%%%%%%%%%%%%%%%%%%%%%%%%%%%%%%%%%%%%%%%%%%%%%%%%%%%%%%%%%%%%%%%%%%%%%%%%%%%%%%%%%%%%%%%%%%%%%
\section{Preliminaries}\label{sec prel}
\setcounter{section}{2}\setcounter{equation}{0}
In this preparatory section, we compile the definitions of functional spaces used in this paper, some auxiliary lemmas
and some facts related to the modulus of continuity.

\subsection{Functional spaces and auxiliary lemmas}\label{subsec Pre1}

For $q\in [1,\infty]$, $L^q_x= L^q_x(\mathbb{R}^2)$, $L^q_{x_i}=L^q_{x_i}(\mathbb{R})$ ($i=1,2$) denote the usual Lebesgue spaces,
and we sometimes abbreviate $L^q_x(\mathbb{R}^2)$ by $L^q$. For $(q,r)\in [1,\infty]^2$, denote
$L^{q,r}_{x_2,x_1}=L^{q,r}_{x_2,x_1}(\mathbb{R}^2)$
the set of the tempered distributions $f\in\mathcal{S}'(\mathbb{R}^2)$ satisfying that
\begin{equation*}
  \|f\|_{L^{q,r}_{x_2,x_1}}\triangleq \|\|f(x)\|_{L^r_{x_1}}\|_{L^q_{x_2}}<\infty.
\end{equation*}
Similarly we can define the space $L^{q,r}_{x_1,x_2}=L^{q,r}_{x_1,x_2}(\mathbb{R}^2)$. Note that in general $L^{q,r}_{x_2,x_1}\neq L^{q,r}_{x_1,x_2}$.

For $s\in\mathbb{N}$, $q\in [1,\infty]$, $W^{s,q}=W^{s,q}(\mathbb{R}^2)$ denotes the usual Sobolev space:
\begin{equation*}
  W^{s,q}\triangleq \Big\{f\in\mathcal{S}'(\mathbb{R}^2);
  \|f\|_{W^{s,q}}\triangleq \sum_{|\beta|\leq s} \|\partial^\beta_x f\|_{L^q}<\infty \Big\},
\end{equation*}
When $q=2$, we also write $W^{s,2}=H^s=H^s(\mathbb{R}^2)$ with the norm $\|\cdot\|_{H^s}$.
For general $s\in\mathbb{R}$, we can also define the Sobolev space of fractional power $H^s=H^s(\mathbb{R}^2)$ via the Fourier transform, i.e.
\begin{equation*}
  H^{s}\triangleq\Big\{f\in \mathcal{S}'(\mathbb{R}^2);
  \|f\|_{H^s}\triangleq \| (1+|\zeta|^s)\widehat f(\zeta)\|_{L^2_\zeta}<\infty\Big\}.
\end{equation*}

In order to define the Besov spaces, we need the following dyadic partition of unity.
Let $\chi\in C^\infty(\mathbb{R}^2)$ be a radial function taking values in $[0,1]$, supported on the ball $B_{4/3}$
and $\chi\equiv 1$ on $B_1$. Define $\varphi(\zeta)= \chi(\zeta/2)-\chi(\zeta)$ for all $\zeta\in\mathbb{R}^2$, then
$\varphi$ is a smooth radial function supported on the shell $\{\zeta\in
\mathbb{R}^2: 1\leq |\zeta|\leq  \frac{8}{3} \}$. Clearly, \begin{equation*}
  \chi(\zeta)+\sum_{j\geq 0}\varphi(2^{-j}\zeta)=1, \quad
  \forall\zeta\in\mathbb{R}^2; \qquad
  \sum_{j\in \mathbb{Z}}\varphi(2^{-j}\zeta)=1, \quad \forall\zeta\neq 0.
\end{equation*}
Then for all $f\in\mathcal{S}'(\mathbb{R}^2)$, define the following nonhomogeneous Littlewood-Paley operators
\begin{equation*}
  \Delta_{-1}f \triangleq \chi(D)f; \qquad
  \Delta_j f \triangleq \varphi(2^{-j}D)f,\;\;\quad \forall j\in\mathbb{N},
\end{equation*}
and thus $\sum_{j\geq-1}\Delta_j f=f$. While for all $f\in\mathcal{S}'(\mathbb{R}^2)/\mathcal{P}(\mathbb{R}^2)$
with $\mathcal{S}'/\mathcal{P}$ the quotient space of tempered distributions up to polynomials,
define the homogeneous Littlewood-Paley operator
\begin{equation*}
  \dot{\Delta}_j f\triangleq \varphi(2^{-j}D)f,\qquad \forall j\in\mathbb{Z},
\end{equation*}
and thus $\sum_{j\in\mathbb{Z}}\dot\Delta_jf=f$.

Now for $(p,r)\in[1,\infty]^2$, $s\in\mathbb{R}$, we define the nonhomogeneous Besov space as follows
\begin{equation*}
  B^s_{p,r}\triangleq\Big\{f\in\mathcal{S}'(\mathbb{R}^2);\|f\|_{B^s_{p,r}}\triangleq\|\{2^{js}\|\Delta
  _j f\|_{L^p}\}_{j\geq -1}\|_{\ell^r}<\infty  \Big\}.
\end{equation*}
We point out that for all $s\in\mathbb{R}$, $B^s_{2,2}=H^s$. We also introduce the space-time Besov space
$L^\sigma([0,T],B^s_{p,r})$, abbreviated by $L^\sigma_{T}B^s_{p,r}$, which is the set of tempered distributions $f$ satisfying
\begin{equation*}
  \|f\|_{L^\sigma_T B^s_{p,r}}\triangleq \big\| \|\{2^{qs}\|\Delta_q f\|_{L^p_x}
  \}_{q\geq -1}\|_{\ell^r} \big\|_{L^\sigma_T }<\infty.
\end{equation*}

Bernstein's inequality is fundamental in the analysis involving frequency localized functions.
\begin{lemma}\label{lem Bern}
  Let $1\leq p\leq q\leq \infty$, $0<a<b<\infty$, $k\geq 0$, $\lambda>0$ and $f\in L^p(\mathbb{R}^2)$. Then,
\begin{align*}
  \textrm{if}\quad \mathrm{supp}\,\widehat{f}\subset\{\zeta: |\zeta|\leq \lambda b\},\;\Longrightarrow\;
  \||D|^k f\|_{L^q(\mathbb{R}^2)}\lesssim \lambda^{k+2(\frac{1}{p}-\frac{1}{q})}\|f\|_{L^p(\mathbb{R}^2)};
\end{align*}
and
\begin{align*}
  \textrm{if}\quad\mathrm{supp}\,\widehat{f}\subset  \{\zeta: a\lambda\leq |\zeta|\leq b\lambda\},\;\Longrightarrow\;
  \||D|^k f\|_{L^p(\mathbb{R}^2)}\approx \lambda^k \|f\|_{L^p(\mathbb{R}^2)}.
\end{align*}

\end{lemma}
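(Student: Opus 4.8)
The plan is to represent $|D|^k$, acting on a frequency-localized function, as convolution against a single fixed kernel rescaled by $\lambda$, and then to extract all the estimates from Young's convolution inequality together with the scaling of that kernel's $L^r(\mathbb{R}^2)$ norm; the dimension $2$ will enter only through the exponent $2(\frac1p-\frac1q)$ produced by this scaling.

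For the first (ball) assertion I would fix a cutoff $\phi\in C_c^\infty(\mathbb{R}^2)$ with $\phi\equiv 1$ on $B_b$ and $\mathrm{supp}\,\phi\subset B_{2b}$. Since $\mathrm{supp}\,\widehat f\subset\{|\zeta|\leq \lambda b\}$, one has $\widehat{|D|^kf}(\zeta)=|\zeta|^k\phi(\zeta/\lambda)\widehat f(\zeta)$, so that $|D|^kf=h_\lambda * f$, where $h_\lambda$ is the inverse Fourier transform of $|\zeta|^k\phi(\zeta/\lambda)$. Writing $M(\eta)=|\eta|^k\phi(\eta)$ and using $|\zeta|^k\phi(\zeta/\lambda)=\lambda^k M(\zeta/\lambda)$, a change of variables gives $h_\lambda(x)=\lambda^{k+2}\check M(\lambda x)$, whence
\begin{equation*}
  \|h_\lambda\|_{L^r(\mathbb{R}^2)}=\lambda^{k+2(1-1/r)}\,\|\check M\|_{L^r(\mathbb{R}^2)},\qquad r\in[1,\infty].
\end{equation*}
Choosing $r$ through $1/r=1-(1/p-1/q)$ and applying Young's inequality $\|h_\lambda * f\|_{L^q}\leq\|h_\lambda\|_{L^r}\|f\|_{L^p}$ then produces exactly the factor $\lambda^{k+2(1/p-1/q)}$, with implicit constant depending only on $b,k,p,q$.

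The second (annulus) assertion splits into two inequalities. The upper bound $\||D|^kf\|_{L^p}\lesssim\lambda^k\|f\|_{L^p}$ follows as above with $q=p$, now starting from a cutoff $\psi$ that equals $1$ on $\{a\leq|\zeta|\leq b\}$ and is supported in a slightly larger annulus. For the reverse bound $\lambda^k\|f\|_{L^p}\lesssim\||D|^kf\|_{L^p}$ the idea is to invert the symbol: taking $\psi\in C_c^\infty(\mathbb{R}^2)$ with $\psi\equiv1$ on $\{a\leq|\zeta|\leq b\}$ and $\mathrm{supp}\,\psi$ contained in an annulus bounded away from the origin, one writes $\widehat f(\zeta)=|\zeta|^{-k}\psi(\zeta/\lambda)\widehat{|D|^kf}(\zeta)$, hence $f=G_\lambda*(|D|^kf)$ with $G_\lambda$ the inverse Fourier transform of $|\zeta|^{-k}\psi(\zeta/\lambda)=\lambda^{-k}N(\zeta/\lambda)$, $N(\eta)=|\eta|^{-k}\psi(\eta)$. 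Since $\psi$ vanishes near the origin, $N$ is smooth and compactly supported, so $\check N$ is Schwartz; the scaling $G_\lambda(x)=\lambda^{2-k}\check N(\lambda x)$ gives $\|G_\lambda\|_{L^1}=\lambda^{-k}\|\check N\|_{L^1}$, and Young's inequality $\|f\|_{L^p}\leq\|G_\lambda\|_{L^1}\||D|^kf\|_{L^p}$ finishes this direction.

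The one genuinely delicate point, which I expect to be the main obstacle, is the integrability $\check M\in L^r(\mathbb{R}^2)$ for all $r\in[1,\infty]$ needed in the ball case; this is nontrivial precisely because $|\eta|^k$ is not smooth at the origin when $k$ is not an even integer. I would resolve it by observing that $\check M$ is bounded and continuous (as $M\in L^1$) while its decay is governed by the homogeneous singular factor: since $\phi$ is smooth with compact support, $\check M$ is $\check\phi$ convolved with the Fourier transform of $|\cdot|^k$ (a homogeneous distribution of degree $-2-k$), so $\check M(x)=O(|x|^{-2-k})$ as $|x|\to\infty$. For $k>0$ this tail lies in every $L^r$ with $r\geq1$ because $(2+k)r>2$, and for $k=0$ the kernel is simply the Schwartz function $\check\phi$; either way $\check M\in L^r$ for all $r\in[1,\infty]$. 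No such difficulty occurs in the annulus case, where all symbols are smooth and supported away from the origin.
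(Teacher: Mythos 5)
The paper states this lemma as a standard fact and gives no proof at all, so there is nothing to compare your argument against line by line; what you have written is essentially the classical proof (frequency cutoff, rescaled convolution kernel, Young's inequality), and it is correct. The exponent bookkeeping is right: with $1/r=1-(1/p-1/q)$ the scaling $\|h_\lambda\|_{L^r}=\lambda^{k+2(1-1/r)}\|\check M\|_{L^r}$ produces exactly $\lambda^{k+2(1/p-1/q)}$, and the annulus case, where all symbols are smooth and supported away from the origin, is unproblematic in both directions. You also correctly identify the only genuinely delicate point, namely the integrability of $\check M=\mathcal{F}^{-1}(|\cdot|^k\phi)$ when $k$ is not an even integer. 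Your justification of the decay $\check M(x)=O(|x|^{-2-k})$ is true but compressed: pairing the finite-part homogeneous distribution $\mathcal{F}^{-1}(|\cdot|^k)$ with the translated Schwartz function $\check\phi(x-\cdot)$ requires splitting the pairing into the regions $|y|\leq|x|/2$ and $|y|\geq|x|/2$ and controlling the regularized part by finitely many derivatives of $\check\phi$; note also that the naive integration-by-parts bound $|x|^{2N}|\check M(x)|\lesssim\|\Delta^N M\|_{L^1}$ only works for $2N<k+2$ and at best yields the non-integrable rate $|x|^{-2}$, so the homogeneity argument (or an equivalent) really is needed. If you want to avoid this point entirely, the standard alternative for the ball case with $k>0$ is to first prove the annulus estimate, then decompose $f=\sum_{2^j\lesssim\lambda}\dot\Delta_j f$ and sum the geometric series $\sum_{2^j\lesssim\lambda}2^{jk}\lesssim\lambda^k$; this uses only smooth symbols supported in annuli and is how the inequality is usually proved in the Littlewood--Paley literature the paper implicitly cites.
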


We shall use the following lemma in the proof of the local existence.
\begin{lemma}\label{lem ExiKey}
  Let $f$ be a smooth real function on $\mathbb{R}^2$ and $u$ be a smooth vector field of $\mathbb{R}^2$.
Then the following assertions hold.
\begin{enumerate}[(1)]
\item For every $s\geq 0$, we have
\begin{equation}\label{eq ExiKey1}
\begin{split}
  & \sum_{j\geq 0} 2^{2js} \Big|\int_{\mathbb{R}^2}\Delta_j \big(\nabla \cdot(u\, f)\big)(x) \, \Delta_j f(x)\mathrm{d}x\Big|
  \lesssim\, \|\nabla u\|_{L^\infty} \|f\|_{ H^s}^2 + \|f\|_{L^\infty} \|\nabla u\|_{H^{s}} \|f\|_{H^s}.
\end{split}
\end{equation}

\item If $u=(\mathcal{R}_1\mathcal{R}_2^2 |D|^{-1} g,0)$, we have that for every $ s>1$ and $p\in ]1,2[$,
\begin{equation}\label{eq exiKey2}
  \sum_{j\in \mathbb{N}} \|\Delta_j \big(\nabla \cdot(u\, f) \big)\|_{L^p} \lesssim \|g\|_{H^s\cap L^p} \|f\|_{H^s}.
\end{equation}
\end{enumerate}
\end{lemma}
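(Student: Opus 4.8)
The plan is to establish both estimates with Bony's paraproduct decomposition $ab = T_a b + T_b a + R(a,b)$, where $T_a b = \sum_k S_{k-1}a\,\Delta_k b$ is the paraproduct (with $S_{k-1}=\sum_{i\le k-2}\Delta_i$ the low-frequency cut-off) and $R(a,b)=\sum_{|k'-k|\le 1}\Delta_{k'}a\,\Delta_k b$ collects the high--high interactions. In both parts the derivative allocation is the organizing principle: when a derivative lands on $f$ one pays $\|f\|_{H^s}$ together with a crude norm of $u$, and when it lands on $u$ one pays a crude norm of $f$ together with $\|\nabla u\|_{H^s}$ (Part 1) or $\|g\|_{H^s}$ (Part 2). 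The decisive structural input in Part 1 is the divergence form, which via integration by parts guarantees that the bare field $u$---not controlled in $L^2$ or $L^\infty$ for Riesz-type velocities---never appears undifferentiated.

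\emph{Part (1).} Since $\Delta_j$ commutes with $\nabla$, I first integrate by parts,
\[
\int_{\mathbb{R}^2}\Delta_j\big(\nabla\cdot(uf)\big)\,\Delta_j f\,\mathrm{d}x = -\sum_{\ell}\int_{\mathbb{R}^2}\Delta_j(u_\ell f)\,\partial_\ell\Delta_j f\,\mathrm{d}x,
\]
and then expand $u_\ell f = T_{u_\ell}f + T_f u_\ell + R(u_\ell,f)$. For the paraproduct $T_{u_\ell}f=\sum_{|k-j|\le 4}S_{k-1}u_\ell\,\Delta_k f$ I split off the commutator, using $\|[\Delta_j,S_{k-1}u_\ell]\Delta_k f\|_{L^2}\lesssim 2^{-j}\|\nabla u\|_{L^\infty}\|\Delta_k f\|_{L^2}$ to absorb the factor $2^j$ coming from $\partial_\ell\Delta_j f$, while the remaining transport piece $S_{j-1}u\cdot\nabla\Delta_j f$ against $\Delta_j f$ symmetrizes into $\tfrac12\langle \mathrm{div}\,S_{j-1}u,|\Delta_j f|^2\rangle$, bounded by $\|\nabla u\|_{L^\infty}\|\Delta_j f\|_{L^2}^2$; summing $2^{2js}$ yields $\|\nabla u\|_{L^\infty}\|f\|_{H^s}^2$. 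For $T_f u_\ell$ and $R(u_\ell,f)$ the derivative falls on $u$: I bound the $f$-factor in $L^\infty$ and use $|k-j|\le 4$ (resp.\ the extra factor $2^{j-k}\le C$ from the divergence for the high--high term) to rewrite $2^j\|\Delta_k u\|_{L^2}\sim\|\Delta_k\nabla u\|_{L^2}$, producing $\|f\|_{L^\infty}\|\nabla u\|_{H^s}\|f\|_{H^s}$; the remainder series converges because $s\ge 0$.

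\emph{Part (2).} Here I would exploit the special structure: the symbol of $\mathcal{R}_1\mathcal{R}_2^2|D|^{-1}$ is homogeneous of degree $-1$, so $u_1$ is one derivative smoother than $g$. Writing $\nabla\cdot(uf)=\partial_1(u_1 f)$ and extracting the factor $2^j$ by Bernstein, I estimate each block of $T_{u_1}f+T_f u_1+R(u_1,f)$ in $L^p$ through H\"older with the exponent split $\tfrac1p=\tfrac1r+\tfrac12$, $r\in\,]2,\infty[$, placing the $g$-factor in $L^r$ and the $f$-factor in $L^2$. For $T_{u_1}f$ the low-frequency factor obeys $\|S_{k-1}u_1\|_{L^r}\lesssim\|u_1\|_{L^r}\lesssim\|g\|_{L^p}$ by Hardy--Littlewood--Sobolev (the $2$D relation $\tfrac1r=\tfrac1p-\tfrac12$), leaving the series $\|g\|_{L^p}\sum_j 2^{j(1-s)}\big(2^{js}\|\Delta_j f\|_{L^2}\big)$, summable by Cauchy--Schwarz precisely when $s>1$. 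For $T_f u_1$ and $R(u_1,f)$ I estimate the $u_1$-block by combining the order $-1$ smoothing with the Bernstein embedding $L^2\hookrightarrow L^r$, namely $\|\Delta_k u_1\|_{L^r}\lesssim 2^{k(1-2/p)}\|\Delta_k g\|_{L^2}$; the resulting series converge for $s>1$ and give $\|f\|_{L^2}\|g\|_{H^s}$.

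The step I expect to be the most delicate is the derivative bookkeeping in Part (1). Because velocities built from Riesz transforms need not decay, any estimate that lets an undifferentiated $u$ appear would fail, so the whole argument hinges on using the divergence form and the commutator cancellation in the $T_u f$ paraproduct to ensure that only $\|\nabla u\|_{L^\infty}$ and $\|\nabla u\|_{H^s}$ enter, exactly matching the stated right-hand side. In Part (2) the corresponding subtlety is purely quantitative: one must check that the Hardy--Littlewood--Sobolev relation, the Bernstein gain, and the threshold $s>1$ (together with $p<2$, which supplies the integrability needed to land in $L^p$) make every dyadic series summable.
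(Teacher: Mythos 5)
Your proposal is correct and follows essentially the same route as the paper: for part (2) the paper uses exactly your Bony decomposition with H\"older at the exponent $2p/(2-p)$, Hardy--Littlewood--Sobolev for the low-frequency velocity blocks, and the order $-1$ Bernstein gain for the high-frequency ones (it only isolates the $\Delta_{-1}u$ block as a separate term, a bookkeeping point your sketch glosses over but that is handled by the same HLS bound you already invoke). For part (1) the paper simply cites Lemma 2.4 of \cite{LiRZ}; the standard commutator-plus-symmetrization argument you outline is the expected content of that reference, and your derivative bookkeeping is sound.
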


\begin{proof}[Proof of Lemma \ref{lem ExiKey}]
(1) The proof of \eqref{eq ExiKey1} essentially follows from the proof of \cite[Lemma 2.4]{LiRZ} with proper modification,
and here we omit the details.

(2) By Bony's decomposition, we get
\begin{equation*}
\begin{split}
  \sum_{j\in\mathbb{N}}\|\Delta_j \big(\nabla\cdot (u\, f) \big)\|_{L^p} = &
  \sum_{j\in\mathbb{N};|k-j|\leq 4} \|\Delta_j \big(\nabla\cdot(S_{k-1} u\, \Delta_k f)\big)\|_{L^p}
  + \sum_{j\in\mathbb{N}} \|\Delta_j \big(\nabla\cdot(\Delta_{-1}u\, S_1 f )\big)\|_{L^p} + \\
  & + \sum_{j\in\mathbb{N};k\geq j-4, k\in\mathbb{N}} \|\Delta_j\big(\nabla\cdot(\Delta_k u\, S_{k+2}f) \big)\|_{L^p} \\
  \triangleq & \,\mathrm{A}_1 +\mathrm{A}_2 +\mathrm{A}_3,
\end{split}
\end{equation*}
where $S_k =\sum_{-1\leq k'\leq k-1}\Delta_{k'}$ for every $k\in\mathbb{N}$. For $A_1$, from Bernstein's inequality, H\"older's inequality
and Hardy-Littlewood-Sobolev's inequality, we obtain
\begin{equation*}
\begin{split}
  A_1  & \lesssim \sum_{j\in \mathbb{N};|k-j|\leq 4} 2^j \|S_{k-1}u\|_{L^{2p/(2-p)}}\|\Delta_k f\|_{L^2} \\
  & \lesssim \|g\|_{L^p} \sum_{j\in\mathbb{N}} 2^{j(1-s)} 2^{k s} \|\Delta_k f\|_{L^2} \lesssim \|g\|_{L^p} \|f\|_{H^s}
\end{split}
\end{equation*}
For $A_2$, since $\Delta_j(\Delta_{-1}u\,S_1 f )= 0$ for $j\geq 3$, we get
\begin{equation*}
  A_2 \lesssim \sum_{0\leq j\leq 2}\|\Delta_j (\Delta_{-1}u\,S_1 f)\|_{L^p}\lesssim \|g\|_{L^p} \|f\|_{L^2}.
\end{equation*}
For $A_3$, from Bernstein's inequality, H\"older's inequality and Young's inequality, we have
\begin{equation*}
\begin{split}
  A_3 &\lesssim \sum_{j\in\mathbb{N}} \sum_{k\geq j-4,k\in\mathbb{N}} 2^j \|\Delta_k u\|_{L^{2p/(2-p)}} \|S_{k+2} f\|_{L^2} \\
  & \lesssim \|f\|_{L^2} \sum_{j\in\mathbb{N}} \sum_{k\geq j-4, k\in\mathbb{N}} 2^j 2^{-k}2^{k\frac{2(p-1)}{p}}\|\Delta_k g\|_{L^2} \\
  & \lesssim \|f\|_{L^2} \sum_{k\in\mathbb{N}} 2^{k\frac{2(p-1)}{p}}\|\Delta_k g\|_{L^2} \lesssim \|f\|_{L^2} \|g\|_{H^s}.
\end{split}
\end{equation*}
Gathering the upper estimates leads to \eqref{eq exiKey2}.
\end{proof}

The logarithmic inequality as follows will be used to show a refined blowup criterion.
\begin{lemma}\label{lem LogInq}
  Let $f\in H^m(\mathbb{R}^2)$ with $m>1$. Suppose that $S\in C^\infty (\mathbb{R}^2\setminus \{0\}) $ is a zero-order homogeneous function
and $\mathcal{T}$ is the operator on $\mathbb{R}^2$ with $S$ the symbol. Then we have
\begin{equation*}
  \|\mathcal{T}f\|_{L^\infty(\mathbb{R}^2)}\leq C+  C \|f\|_{L^\infty(\mathbb{R}^2)} \log\big(e + \|f\|_{H^m(\mathbb{R}^2)}\big).
\end{equation*}
\end{lemma}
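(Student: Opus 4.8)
The plan is to estimate $\mathcal{T}f$ through its Littlewood--Paley pieces, controlling the low and intermediate frequencies by $\|f\|_{L^\infty}$ and the high frequencies by $\|f\|_{H^m}$, and then to balance the two contributions by optimizing a frequency cutoff; the logarithm is exactly the price of this optimization. Concretely, since $m>1$ guarantees $H^m(\mathbb{R}^2)\hookrightarrow L^\infty$, for an integer $N\geq 0$ to be chosen I would write
\[
  \mathcal{T}f=\mathcal{T}\Delta_{-1}f+\sum_{0\leq j\leq N}\mathcal{T}\Delta_j f+\sum_{j>N}\mathcal{T}\Delta_j f,
\]
and estimate the three groups separately in $L^\infty$.

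For the intermediate block $0\le j\le N$, the key structural observation is that $\mathcal{T}\Delta_j$ is a convolution operator whose kernel has an $L^1$ norm bounded uniformly in $j$. Indeed, after the dilation $\zeta\mapsto 2^j\zeta$ and using that $S$ is homogeneous of degree zero, its symbol $S(\zeta)\varphi(2^{-j}\zeta)$ becomes $S(\zeta)\varphi(\zeta)$, a smooth function supported in a fixed annulus; hence its inverse Fourier transform is Schwartz and the dilation preserves the $L^1$ norm. This gives $\|\mathcal{T}\Delta_j f\|_{L^\infty}\lesssim\|f\|_{L^\infty}$ for each $j\ge 0$, and therefore $\big\|\sum_{0\le j\le N}\mathcal{T}\Delta_j f\big\|_{L^\infty}\lesssim N\|f\|_{L^\infty}$. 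For the high block $j>N$, I would combine Bernstein's inequality (Lemma~\ref{lem Bern}, with $p=2$, $q=\infty$) with the $L^2$-boundedness of $\mathcal{T}$ (its symbol is bounded) and then bring in the $H^m$ weight via Cauchy--Schwarz:
\[
  \sum_{j>N}\|\mathcal{T}\Delta_j f\|_{L^\infty}
  \lesssim\sum_{j>N}2^{j}\|\Delta_j f\|_{L^2}
  \lesssim\Big(\sum_{j>N}2^{2j(1-m)}\Big)^{1/2}\|f\|_{H^m}
  \lesssim 2^{N(1-m)}\|f\|_{H^m},
\]
where the geometric sum converges precisely because $m>1$. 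Choosing $N\approx\frac{1}{m-1}\log_2\big(e+\|f\|_{H^m}\big)$ makes the high-frequency term $O(1)$ while keeping $N\lesssim\log(e+\|f\|_{H^m})$, so these two groups already produce the asserted bound $C+C\|f\|_{L^\infty}\log(e+\|f\|_{H^m})$.

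The main obstacle is the lowest block $\mathcal{T}\Delta_{-1}f$, which cannot be absorbed in the same way: because $S$ is merely homogeneous of degree zero and is in general discontinuous at the origin, the symbol $S(\zeta)\chi(\zeta)$ is not smooth there, its kernel $K_{-1}$ decays only like $|x|^{-2}$, and $\mathcal{T}\Delta_{-1}$ fails to be bounded on $L^\infty$ (the familiar failure of Riesz-type transforms at the $L^\infty$ endpoint). I would handle this single block by hand: splitting the convolution $K_{-1}*f$ according to $|x-y|\le M$ and $|x-y|>M$, the near part contributes $\lesssim\|f\|_{L^\infty}\log M$ (from $\int_{1<|z|\le M}|z|^{-2}\,\mathrm{d}z\approx\log M$, the part $|z|\le1$ being harmless since $K_{-1}$ is bounded there) and the far part contributes $\lesssim M^{-1}\|f\|_{L^2}$ by Cauchy--Schwarz (using $\int_{|z|>M}|z|^{-4}\,\mathrm{d}z\approx M^{-2}$); taking $M\approx e+\|f\|_{H^m}$ again yields a logarithmic bound. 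Equivalently, and perhaps more cleanly, one can observe that $\mathcal{T}$ maps $L^\infty$ into $BMO$ with $\|\mathcal{T}f\|_{BMO}\lesssim\|f\|_{L^\infty}$ while $\|\mathcal{T}f\|_{H^m}\lesssim\|f\|_{H^m}$, and then apply the Kozono--Taniuchi logarithmic inequality to $g=\mathcal{T}f$. Either route gives the claim; the only real work is this low-frequency endpoint, and everything else is the routine balancing described above.
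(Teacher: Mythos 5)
Your proof is correct, but it follows a slightly different route from the paper's. The paper uses the \emph{homogeneous} Littlewood--Paley decomposition $\sum_{j\in\mathbb{Z}}\dot\Delta_j$ and splits into $j\le -J$, $-J<j<J$, $j\ge J$: the middle and high ranges are treated exactly as in your argument (uniform $L^\infty$-boundedness of $\dot\Delta_j\mathcal{T}$ on annuli, and Bernstein plus the $H^m$ weight with $2^{-J(m-1)}\|f\|_{H^m}$), while the far low frequencies are disposed of by Bernstein alone, $\sum_{j\le -J}\|\dot\Delta_j\mathcal{T}f\|_{L^\infty}\lesssim\sum_{j\le -J}2^{j}\|\dot\Delta_j\mathcal{T}f\|_{L^2}\lesssim 2^{-J}\|f\|_{L^2}$, which is $O(1)$ after the same choice $J\approx\log(e+\|f\|_{H^m})$ (the paper takes $a=\min\{1,m-1\}$ to cover both tails with one exponent). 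By working with the nonhomogeneous decomposition you are instead forced to confront the single block $\mathcal{T}\Delta_{-1}f$, and your diagnosis is accurate: $S\chi$ is not smooth at the origin, the kernel decays only like $|x|^{-2}$, and $\mathcal{T}\Delta_{-1}$ fails to be $L^\infty$-bounded. Your near/far splitting of the convolution, trading $\|f\|_{L^\infty}\log M$ against $M^{-1}\|f\|_{L^2}$, is a valid fix and yields the same bound (as does the $BMO$/Kozono--Taniuchi detour you mention). The net comparison: the paper's homogeneous splitting makes the low-frequency endpoint disappear into a geometric sum at the cost of invoking $\|f\|_{L^2}\le\|f\|_{H^m}$ at the far low end, whereas your version keeps the standard nonhomogeneous blocks but requires the extra kernel analysis; both optimizations of the cutoff produce the identical logarithm.
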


\begin{proof}[Proof of Lemma \ref{lem LogInq}]
  By a high-low frequency decomposition, and from Bernstein's inequality and Calde\'ron-Zygmund's theorem, we have that for some $J\in\mathbb{N}$,
\begin{equation*}
\begin{split}
  \|\mathcal{T}f\|_{L^\infty} &\leq \Big(\sum_{j\leq -J} + \sum_{-J<j<J}+\sum_{j\geq J}\Big)
  \big( \|\dot \Delta_j\mathcal{T}f\|_{L^\infty}\big)\\
  & \lesssim \sum_{j\leq-J}2^j\|\dot\Delta_j\mathcal{T} f\|_{L^2} + \sum_{-J<j<J}\|\dot\Delta_j\mathcal{T}f\|_{L^\infty}
  +\sum_{j\geq J} 2^{j(1-m)}  2^{jm}\|\dot\Delta_j \mathcal{T} f\|_{L^2} \\
  & \lesssim 2^{-J} \|f\|_{L^2} + J \|f\|_{L^\infty} + 2^{-J(m-1)} \|f\|_{H^m} \\
  & \leq C 2^{-J a} \|f\|_{H^m} + CJ \|f\|_{L^\infty},
\end{split}
\end{equation*}
where $a\triangleq \min\{1,m-1\}$. Thus in order to make
$2^{-Ja}\|f\|_{H^m}\approx 1$, we can choose
\begin{equation*}
  J\triangleq [\log(e+\|f\|_{ H^m})/a]+1
\end{equation*}
with $[x]$ denoting the integer part of a real number $x$, and the desired estimate follows.
\end{proof}

We have the following integral expression of the operator $|D|^\alpha$ ($\alpha\in ]0,2[$) (cf. \cite[Theorem 1]{DroI}).
\begin{lemma}\label{lem DalpExp}
  Let $\alpha\in ]0,2[$, $r>0$ and $f\in C_b^2(\mathbb{R}^2)$ ($= C^2(\mathbb{R}^2)\cap W^{2,\infty}(\mathbb{R}^2)$). Then for every $x\in\mathbb{R}^2$,
\begin{equation*}
  |D|^\alpha f(x) = -\,c_\alpha \bigg(\int_{B_r}\frac{f(x+y)-f(x)-y\cdot \nabla f(x)}{|y|^{2+\alpha}} \mathrm{d}y
  + \int_{B_r^c} \frac{f(x+y)-f(x)}{|y|^{2+\alpha}}\mathrm{d}y\bigg),
\end{equation*}
where $c_\alpha = \frac{\alpha \Gamma(1+\alpha/2)}{2\pi^{1+\alpha}\Gamma(1-\alpha/2)}$ and $\Gamma$ is the usual Euler's function.
\end{lemma}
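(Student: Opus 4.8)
The plan is to recognize the right-hand side as a translation-invariant linear operator, hence a Fourier multiplier, and to check that its symbol is exactly $|\zeta|^\alpha$, which is the defining property of $|D|^\alpha$. Before computing the symbol I would record two preliminary facts. First, the integral is absolutely convergent for $f\in C_b^2$: Taylor's formula gives $|f(x+y)-f(x)-y\cdot\nabla f(x)|\lesssim \|f\|_{W^{2,\infty}}|y|^2$, so the $B_r$-integrand is $O(|y|^{-\alpha})$, integrable near $0$ since $\alpha<2$; while on $B_r^c$ the integrand is $O(|y|^{-2-\alpha})$, integrable at infinity since $\alpha>0$. Second, the expression does not depend on $r$: the difference between the value for two radii $r_1<r_2$ equals $\int_{r_1<|y|<r_2}\frac{y\cdot\nabla f(x)}{|y|^{2+\alpha}}\,\mathrm{d}y$, which vanishes because the integrand is odd in $y$ over a symmetric annulus. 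Since both sides are linear and translation invariant, it suffices to identify the symbol; I would first argue on Schwartz functions, where the Fourier transform is classical, and then extend to $C_b^2$ by mollification.

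For the symbol, I would move the Fourier transform (in the convention $\widehat g(\zeta)=\int e^{ix\cdot\zeta}g(x)\,\mathrm{d}x$) inside the $y$-integrals, which is legitimate by the absolute convergence above and Fubini. Using $\widehat{f(\cdot+y)}(\zeta)=e^{-iy\cdot\zeta}\widehat f(\zeta)$ and $\widehat{y\cdot\nabla f}(\zeta)=-i(y\cdot\zeta)\widehat f(\zeta)$, the right-hand side becomes $m(\zeta)\widehat f(\zeta)$ with
$$m(\zeta)=-c_\alpha\left(\int_{B_r}\frac{e^{-iy\cdot\zeta}-1+i\,y\cdot\zeta}{|y|^{2+\alpha}}\,\mathrm{d}y+\int_{B_r^c}\frac{e^{-iy\cdot\zeta}-1}{|y|^{2+\alpha}}\,\mathrm{d}y\right).$$
Splitting $e^{-iy\cdot\zeta}$ into its even and odd parts, the odd contributions $i(y\cdot\zeta-\sin(y\cdot\zeta))$ on $B_r$ and $-i\sin(y\cdot\zeta)$ on $B_r^c$ drop out over the symmetric domains, so the two pieces recombine into the single convergent integral
$$m(\zeta)=c_\alpha\int_{\mathbb{R}^2}\frac{1-\cos(y\cdot\zeta)}{|y|^{2+\alpha}}\,\mathrm{d}y.$$

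It remains to evaluate this integral. Rotational invariance of the kernel shows $m(\zeta)$ depends only on $|\zeta|$, and the substitution $y\mapsto y/|\zeta|$ extracts the homogeneity, giving $m(\zeta)=c_\alpha A_\alpha|\zeta|^\alpha$ with $A_\alpha\triangleq\int_{\mathbb{R}^2}\frac{1-\cos y_1}{|y|^{2+\alpha}}\,\mathrm{d}y$. Thus the whole statement reduces to the purely numerical identity that $c_\alpha$ is the reciprocal of $A_\alpha$ in the relevant Fourier normalization, and I expect this constant evaluation to be the only genuinely delicate point. I would compute $A_\alpha$ in polar coordinates, where the radial part is handled by the classical formula $\int_0^\infty\frac{1-\cos(as)}{s^{1+\alpha}}\,\mathrm{d}s=a^\alpha\frac{\pi}{2\Gamma(1+\alpha)\sin(\pi\alpha/2)}$ (valid for $\alpha\in]0,2[$) and the angular part by a Beta-function integral, after which one simplifies the resulting product of Gamma factors via the reflection and duplication formulas. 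This bookkeeping, which must be carried out with careful attention to the Fourier normalization underlying the stated value $c_\alpha=\frac{\alpha\Gamma(1+\alpha/2)}{2\pi^{1+\alpha}\Gamma(1-\alpha/2)}$, is precisely the content of \cite[Theorem 1]{DroI}, to which I would appeal for the explicit constant; the passage from $\mathcal{S}$ to $C_b^2$ then finishes the proof.
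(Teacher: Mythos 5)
Your proposal is essentially sound, but note that the paper itself offers no proof of this lemma at all: it simply cites \cite[Theorem 1]{DroI}. So what you have written is strictly more than the paper does. Your outline is the standard multiplier argument and each step checks out: the Taylor bound gives integrability $O(|y|^{-\alpha})$ near the origin (needing $\alpha<2$) and $O(|y|^{-2-\alpha})$ at infinity (needing $\alpha>0$); the $r$-independence follows from oddness of $y\mapsto y\cdot\nabla f(x)|y|^{-2-\alpha}$ on a symmetric annulus; the odd parts of $e^{-iy\cdot\zeta}$ cancel so the symbol collapses to $c_\alpha\int_{\mathbb{R}^2}(1-\cos(y\cdot\zeta))|y|^{-2-\alpha}\,\mathrm{d}y$; and scaling plus rotation invariance give homogeneity of degree $\alpha$. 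The radial formula $\int_0^\infty(1-\cos(as))s^{-1-\alpha}\,\mathrm{d}s=a^\alpha\pi/(2\Gamma(1+\alpha)\sin(\pi\alpha/2))$ and the Beta integral for $\int_0^{2\pi}|\cos\theta|^\alpha\,\mathrm{d}\theta$ do combine, via duplication and reflection, to an explicit value of $A_\alpha$, so the constant evaluation you defer is genuinely routine once set up.

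Two caveats are worth making explicit. First, your normalization warning is well placed: carrying out the computation with the paper's stated convention $\widehat f(\zeta)=\int e^{ix\cdot\zeta}f(x)\,\mathrm{d}x$ and symbol $|\zeta|^\alpha$ yields $1/A_\alpha=\alpha 2^{\alpha-1}\Gamma(1+\alpha/2)/(\pi\Gamma(1-\alpha/2))$, which differs from the quoted $c_\alpha=\alpha\Gamma(1+\alpha/2)/(2\pi^{1+\alpha}\Gamma(1-\alpha/2))$ by a factor $(2\pi)^\alpha$; the stated constant is the Droniou--Imbert one, tied to their $e^{-2\pi i x\cdot\xi}$ convention, so your argument would actually surface a (harmless) inconsistency in the paper rather than reproduce its constant verbatim. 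Second, the final passage from $\mathcal{S}$ to $C^2_b$ is not purely a density argument: for non-decaying $f\in C^2_b$ the product $|\zeta|^\alpha\widehat f$ is not automatically meaningful as a tempered distribution (the symbol is not smooth at the origin), so one must either specify in what sense $|D|^\alpha f$ is defined for such $f$ (e.g.\ via the subordinated semigroup or by taking the integral formula as the definition and checking consistency) or justify the mollification limit on both sides; a sentence acknowledging this would close the only real gap in the write-up.
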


The following positivity lemma is also useful (cf. \cite[Lemma 2.7]{LiRZ}).
\begin{lemma}\label{lem Pos}
  Let $\kappa\geq 0$, $\alpha\in ]0,2]$, $p\in[1,\infty[$ and $T>0$. Denote $U_T\triangleq ]0,T]\times \mathbb{R}^2$,
and $C^{i,j}_{t,x}(U_T)\triangleq C^i_t(]0,T];C^j_x(\mathbb{R}^2))$, $i,j\in\mathbb{N}$.
Assume that $u\in C^{0,1}_{t,x}(U_T)$ is a real vector field of
$\mathbb{R}^2$, $\theta_0\in C(\mathbb{R}^2)$
is a real scalar and
$$\theta\in C^{1,0}_{t,x}(U_T)\cap C^{0,2}_{t,x}(U_T)
\cap C_{t,x}^0(\overline{U}_T)\cap L^p(U_T)$$
is a real scalar function satisfying the following pointwise inequality
\begin{equation*}
\begin{cases}
  \partial_t\theta + \nabla\cdot(u\, \theta) \geq - \kappa |D|^\alpha \theta, &\quad (t,x)\in U_T, \\
  \theta(0,x)=\theta_0(x),  & \quad x\in \mathbb{R}^2.
\end{cases}
\end{equation*}
We also suppose that there is a positive constant $C<\infty$ such that
\begin{equation*}
  \sup_{\overline{U}_T} |\theta| + \sup_{U_T} \big(|\partial_t \theta| + |\nabla \theta| + |\nabla^2\theta|\big)
  + \sup_{U_T} |\mathrm{div}\, u| \leq C,
\end{equation*}
Then if $\theta_0\geq 0$, we have $\theta\geq 0$ in $\overline{U}_T$.
\end{lemma}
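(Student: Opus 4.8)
The plan is to prove this nonlocal maximum principle by reducing everything to a one–dimensional differential inequality for the spatial infimum $m(t)\triangleq \inf_{x\in\mathbb{R}^2}\theta(t,x)$, and then running a Gronwall argument showing that $m$ cannot become negative given $m(0)=\inf\theta_0\ge 0$. The heart of the matter is the pointwise sign of the nonlocal term at a spatial minimum: I will show that whenever $\theta(t,\cdot)$ attains a minimum at a point $x_0$, one has $|D|^\alpha\theta(t,x_0)\le 0$, so that the dissipative term $-\kappa|D|^\alpha\theta$ is \emph{nonnegative} there and helps rather than hurts.

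For this pointwise fact, fix $t$ and a minimizer $x_0$ of $\theta(t,\cdot)$. The bounds on $|\theta|,|\nabla\theta|,|\nabla^2\theta|$ guarantee $\theta(t,\cdot)\in C_b^2(\mathbb{R}^2)$, so for $\alpha\in]0,2[$ the representation of Lemma~\ref{lem DalpExp} applies. At $x_0$ we have $\nabla\theta(t,x_0)=0$ and $\theta(t,x_0+y)-\theta(t,x_0)\ge 0$ for every $y$, whence both integrands in that formula are nonnegative and, since $c_\alpha>0$, $|D|^\alpha\theta(t,x_0)\le 0$. The endpoint $\alpha=2$ is the classical case $|D|^2=-\Delta$, for which $|D|^2\theta(t,x_0)=-\Delta\theta(t,x_0)\le 0$ because the Hessian is positive semidefinite at a minimum. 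Feeding this into the hypothesized inequality and using $\nabla\cdot(u\,\theta)=u\cdot\nabla\theta+\theta\,\mathrm{div}\,u$ together with $\nabla\theta(t,x_0)=0$, I obtain at the minimizer
\begin{equation*}
  \partial_t\theta(t,x_0)\ge -\kappa|D|^\alpha\theta(t,x_0)-\theta(t,x_0)\,\mathrm{div}\,u(t,x_0)\ge -m(t)\,\mathrm{div}\,u(t,x_0)\ge -C|m(t)|,
\end{equation*}
where $C\triangleq \sup_{U_T}|\mathrm{div}\,u|$.

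The main obstacle is that the infimum need not a priori be attained, since $\theta$ is only assumed to lie in $L^p(U_T)$ and carries no prescribed spatial decay. I will resolve this by first showing $\lim_{|x|\to\infty}\theta(t,x)=0$ for every $t\in]0,T]$: since $\sup_{U_T}|\nabla\theta|\le C$ and $\theta(t,\cdot)\in L^p(\mathbb{R}^2)$ for a.e.\ $t$, the standard fact that an $L^p$ function with uniformly bounded gradient vanishes at infinity applies for a.e.\ $t$, and the uniform bound $\sup_{U_T}|\partial_t\theta|\le C$ (which yields Lipschitz continuity in $t$, uniformly in $x$) upgrades this to every $t$. Consequently, whenever $m(t)<0$ the infimum is attained at a finite point $x_0(t)$, so the pointwise estimate above is available. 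Finally, $m$ is Lipschitz (again from $\sup_{U_T}|\partial_t\theta|\le C$), hence differentiable a.e.; at a.e.\ $t$ with $m(t)<0$ the envelope-type inequality $m'(t)\ge \partial_t\theta(t,x_0(t))\ge -C|m(t)|=Cm(t)$ holds. Setting $w(t)\triangleq\max(-m(t),0)\ge 0$, this gives $w'(t)\le Cw(t)$ a.e.\ together with $w(0)=0$, so Gronwall's inequality forces $w\equiv 0$, i.e.\ $m(t)\ge 0$ and $\theta\ge 0$ on $\overline{U}_T$. The two delicate points are the passage from the a.e.-in-$t$ vanishing at infinity to all $t$, and the a.e.\ differentiation of $m(t)$ with the identification $m'(t)\ge\partial_t\theta(t,x_0(t))$; both are consequences of the uniform bounds assumed in the statement.
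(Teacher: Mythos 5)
The paper does not actually prove this lemma: it is quoted verbatim from \cite[Lemma 2.7]{LiRZ} and the proof is deferred to that reference, so there is no in-paper argument to compare against. Your proposal is a correct, self-contained proof of the standard C\'ordoba--C\'ordoba / pointwise-maximum-principle type, and every ingredient you invoke is available from the hypotheses: the $C^2_b$ bound legitimizes the integral representation of Lemma \ref{lem DalpExp} and gives $|D|^\alpha\theta(t,x_0)\le 0$ at a spatial minimizer (with $\alpha=2$ handled by the Hessian); the $L^p(U_T)$ assumption together with the uniform Lipschitz bounds in $x$ and $t$ yields vanishing at spatial infinity for every $t\in\,]0,T]$, so the infimum is attained whenever $m(t)<0$; and the Rademacher--Gronwall closing step is sound. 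Two points deserve one more line each if you write this up. First, the inequality $m'(t)\ge\partial_t\theta(t,x_0(t))$ at a point of differentiability is best obtained from the left difference quotient ($m(t+h)\le\theta(t+h,x_0(t))$ divided by $h<0$ reverses the inequality); the right difference quotient gives the opposite bound, so in fact $m'(t)=\partial_t\theta(t,x_0(t))$ there --- your stated inequality is the one you need and it does hold, but the derivation should make clear which side of $t$ is being used. Second, the Gronwall argument needs $\lim_{t\to 0^+}m(t)\ge 0$, and since $\sup_{U_T}|\partial_t\theta|\le C$ is only assumed on $]0,T]$ while $\theta_0\ge 0$ lives at $t=0$, you should note that the uniform bound $|\theta(t,x)-\theta(s,x)|\le C|t-s|$ for $0<s<t$ combined with the pointwise continuity of $\theta$ on $\overline{U}_T$ gives $|\theta(t,x)-\theta_0(x)|\le Ct$ uniformly in $x$, hence $m(t)\ge -Ct$ and $w(0^+)=0$. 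With these two remarks filled in, the argument is complete.
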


\subsection{Modulus of continuity.}

We begin with introducing some terminology.
\begin{definition}
  A function $\omega:[0,\infty[ \mapsto [0,\infty[$ is called a modulus of continuity (abbr. MOC)
  if $\omega$ is continuous on $[0,\infty[$, increasing, concave, and
  piecewise $C^2$ with one-sided derivatives defined at each point in $[0,\infty[$ (maybe infinite at $\xi=0$).
  We call that a function $f:\mathbb{R}^2\rightarrow \mathbb{R}$ has (or obeys) the modulus of continuity $\omega$
  if $|f(x)-f(y)| \leq \omega(|x-y|)$ for every $x, y\in \mathbb{R}^2$. We also say that $f$ strictly obeys the modulus
  of continuity if the above inequality is strict for $x\neq y$.
\end{definition}

We first have the lemma concerning the action of the zero-order pseudo-differential operator like $\mathcal{R}_1^2\mathcal{R}_2^2$
on the function obeying MOC.
\begin{lemma}\label{lem MudRR}
Let $f,g: \mathbb{R}^2\mapsto\mathbb{R}$ obey the modulus of continuity $\omega$ and
the vector field $u=(\mathcal{R}_1^2\mathcal{R}_2^2 (f-g),0)$. Then the following assertions hold.
\begin{enumerate}[(1)]
  \item $u$ obeys the following modulus of continuity
  \begin{equation}\label{eq Omega}
    \Omega(\xi)= A_1 \omega(\xi) + A_2\Big(\int_0^\xi \frac{\omega(\eta)}{\eta}\mathrm{d}\eta
    + \xi \int_\xi^\infty \frac{\omega(\eta)}{\eta^2}\mathrm{d}\eta \Big),
  \end{equation}
  where $A_1$ and $A_2$ are positive absolute constants.
  \item If $f$ don't strictly have the MOC $\omega$ and there exists two separate points $x, y\in\mathbb{R}^2$ satisfying
  $f(x)-f(y)=\omega(\xi)$ with $\xi=|x-y|$. Then,
  \begin{equation}\label{eq MudRR1}
    |u\cdot\nabla f(x)- u\cdot\nabla f(y)|\leq \Omega(\xi)\omega'(\xi).
  \end{equation}
\end{enumerate}
\end{lemma}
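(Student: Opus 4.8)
The plan is to prove the two parts in order, with essentially all of the work concentrated in Part (1); Part (2) will then follow as a short geometric consequence combined with Part (1).

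For Part (1), I would first reduce matters to a single convolution estimate. The second component of $u$ is identically $0$ and so trivially obeys any modulus of continuity, while $\phi\triangleq f-g$ obeys the MOC $2\omega$ by the triangle inequality; hence it suffices to bound the oscillation of $u_1=\mathcal{R}_1^2\mathcal{R}_2^2\phi$. The operator $T=\mathcal{R}_1^2\mathcal{R}_2^2$ has the homogeneous degree-zero symbol $m(\zeta)=\zeta_1^2\zeta_2^2/|\zeta|^4$, whose spherical average is $c=\frac{1}{2\pi}\int_0^{2\pi}\cos^2\theta\sin^2\theta\,\mathrm{d}\theta=\frac18\neq 0$. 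The key structural point is to split $T=cI+\widetilde T$, where $\widetilde T$ is the convolution operator whose multiplier $m(\zeta)-c$ has vanishing mean on the unit circle. Since $m$ is smooth on the circle, $\widetilde T$ is then a classical Calder\'on--Zygmund operator whose kernel $K$ is homogeneous of degree $-2$, smooth away from the origin, has vanishing mean on each circle centered at the origin, and satisfies $|K(w)|\lesssim |w|^{-2}$ and $|\nabla K(w)|\lesssim|w|^{-3}$. The identity part $c\,\phi$ obeys the MOC $2c\,\omega$, and this is precisely the origin of the term $A_1\omega(\xi)$ in \eqref{eq Omega}.

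It then remains to show that $\widetilde T\phi$ obeys a MOC of the form $A_2\big(\int_0^\xi \omega(\eta)/\eta\,\mathrm{d}\eta+\xi\int_\xi^\infty\omega(\eta)/\eta^2\,\mathrm{d}\eta\big)$, which is the main obstacle. Fixing $x,y$ with $\xi=|x-y|$ and using the vanishing-mean cancellation of $K$ to subtract the constant $\phi(x)$, I would write
\[
\widetilde T\phi(x)-\widetilde T\phi(y)=\mathrm{p.v.}\int_{\mathbb{R}^2}\big(K(x-z)-K(y-z)\big)\big(\phi(z)-\phi(x)\big)\,\mathrm{d}z,
\]
and split the $z$-integral at the scale $|z-x|=2\xi$. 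On the inner region I bound each kernel separately by $|K|\lesssim|\cdot|^{-2}$ together with $|\phi(z)-\phi(x)|\le 2\omega(|z-x|)$, producing after passing to polar coordinates a contribution $\lesssim\int_0^{C\xi}\omega(r)/r\,\mathrm{d}r$; on the outer region I use the gradient bound $|K(x-z)-K(y-z)|\lesssim \xi\,|z-x|^{-3}$ and the MOC to produce a contribution $\lesssim\xi\int_{c\xi}^\infty\omega(r)/r^2\,\mathrm{d}r$. Adding the two gives the integral part of \eqref{eq Omega}, with absolute constants $A_1,A_2$ depending only on $K$. The delicate points I expect here are the justification of the principal value and the cancellation for a $\phi$ that merely obeys a (possibly unbounded) MOC, and the convergence of the outer integral; in the applications $\phi$ is smooth and the relevant quantities are finite, so these are handled by the usual truncation and limiting arguments.

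For Part (2), I would exploit that $u=(u_1,0)$ forces $u\cdot\nabla f=u_1\,\partial_1 f$, and then extract the geometry of the breakthrough configuration. Since $f$ obeys $\omega$ and $x\neq y$ satisfy $f(x)-f(y)=\omega(\xi)$ with $\xi=|x-y|$, the pair $(x,y)$ maximizes $f(x')-f(y')-\omega(|x'-y'|)$, so the first-order conditions in $x'$ and $y'$ yield $\nabla f(x)=\nabla f(y)=\omega'(\xi)\,e$ with $e=(x-y)/|x-y|$. In particular $\partial_1 f(x)=\partial_1 f(y)=\omega'(\xi)\,e_1$, whence
\[
u\cdot\nabla f(x)-u\cdot\nabla f(y)=\omega'(\xi)\,e_1\,\big(u_1(x)-u_1(y)\big).
\]
Because $|e_1|\le 1$ and, by Part (1), $u$ obeys the MOC $\Omega$ so that $|u_1(x)-u_1(y)|\le\Omega(\xi)$, the estimate \eqref{eq MudRR1} follows immediately. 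The only care needed is the differentiability of $\omega$ at $\xi$, which is ensured by the piecewise-$C^2$ hypothesis on the MOC (using one-sided derivatives at a break point), together with the smoothness of $f$ available in the applications.
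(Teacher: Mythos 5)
Your overall strategy is exactly the paper's: split $\mathcal{R}_1^2\mathcal{R}_2^2$ into $c\,\mathrm{Id}$ plus a convolution with a mean-zero, degree $-2$ homogeneous Calder\'on--Zygmund kernel (the paper invokes \cite[Lemma 4.13]{Duo} for this and then defers the rest to \cite{KisNV}), and for part (2) the paper again simply cites \cite{KisNV}. Your part (2) argument via the first-order conditions at the maximum of $f(x')-f(y')-\omega(|x'-y'|)$ is a correct and standard variant (Kiselev--Nazarov--Volberg phrase it through the flow map $h\mapsto f(x+hu(x))-f(y+hu(y))$, but both give $\nabla f(x)=\nabla f(y)=\omega'(\xi)e$ and hence \eqref{eq MudRR1}), modulo the one-sided-derivative caveat you already note.

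There is, however, one step in your part (1) that fails as literally written. On the inner region $\{|z-x|\le 2\xi\}$ you propose to bound \emph{each} kernel separately by $|K|\lesssim|\cdot|^{-2}$ together with $|\phi(z)-\phi(x)|\le 2\omega(|z-x|)$. This works for the $K(x-z)$ piece, but the $K(y-z)$ piece then reads $\int_{|z-x|\le 2\xi}|z-y|^{-2}\,\omega(|z-x|)\,\mathrm{d}z$, and since $y$ lies inside that ball and $\omega(|z-x|)\to\omega(\xi)>0$ as $z\to y$, this integral diverges logarithmically at $z=y$. The fix is the standard one: near $y$ you must also exploit the vanishing mean of $K$ over annuli centered at $y$, i.e.\ subtract $\phi(y)$ rather than $\phi(x)$ there, so that the singular piece becomes $\int|K(y-z)|\,\omega(|z-y|)\,\mathrm{d}z\lesssim\int_0^{C\xi}\omega(r)r^{-1}\,\mathrm{d}r$, while the leftover constant $(\phi(y)-\phi(x))\,\mathrm{p.v.}\!\int K(y-z)\,\mathrm{d}z$ vanishes by the cancellation (or contributes only an $O(\omega(\xi))$ term absorbed into $A_1\omega(\xi)$). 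With that correction the inner and outer contributions give precisely the two integrals in \eqref{eq Omega}, and the rest of your argument is sound.
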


\begin{proof}[Proof of Lemma \ref{lem MudRR}]
  (1) Since $m(\zeta)=\frac{\zeta_1^2\zeta_2^2}{|\zeta|^4}$ is the symbol of $\mathcal{R}_1^2 \mathcal{R}_2^2$ satisfying that
it is a zero-order homogeneous function belonging to $C^{\infty}(\mathbb{R}^2\setminus \{0\})$,
by virtue of \cite[Lemma 4.13]{Duo}, and denoting $\mathbb{S}^1$ the unit circle, we know that there exist $H\in C^{\infty}(\mathbb{S}^1)$
with zero average and two positive constants $a_1=\frac{1}{2\pi}\int_{\mathbb{S}^1}m(\zeta)\mathrm{d}\zeta$, $a_2>0$ such that
\begin{equation*}
  \mathcal{R}_1^2 \mathcal{R}_2^2 (f-g) = a_1\, (f-g) + a_2\,\Big(\textrm{p.v.} \, \frac{H(x')}{|x|^2}\Big)* (f-g),
\end{equation*}
with $x'\in\mathbb{S}^1$. Based on this expression and the fact that $f-g$ has the MOC $2\omega$, the desired result follows from
the deduction in \cite{KisNV} treating the corresponding point.

(2) We refer to \cite{KisNV} for the proof of this point.
\end{proof}

We also need a special action of the dissipation operator $|D|^\alpha$ on the function having MOC.
\begin{lemma}\label{lem MudDiss}
  Let $\alpha\in ]0,2]$, the real scalar function $f\in C^2_b(\mathbb{R}^2)$ obey the MOC $\omega$
but don't strictly obey it. Assume that there are two separate points $x,y\in\mathbb{R}^2$ such that
$f(x)-f(y)=\omega(\xi)$ with $\xi=|x-y|$. Then we have
\begin{equation*}
\begin{split}
  [-|D|^\alpha f](x)-[-|D|^\alpha f](y)\leq \Psi_\alpha(\xi),
\end{split}
\end{equation*}
where
\begin{equation}\label{eq Psi}
  \Psi_\alpha(\xi)=
  \begin{cases}
  B_\alpha \int_0^{\xi/2} \frac{\omega(\xi+2\eta)+
  \omega(\xi-2\eta)-2\omega(\xi)}{\eta^{1+\alpha}} \mathrm{d}\eta
  + B_\alpha \int_{\xi/2}^\infty \frac{\omega(\xi+2\eta) -\omega(2\eta-\xi)-2\omega(\xi)}{\eta^{1+\alpha}} \mathrm{d}\eta, &\quad \alpha\in]0,2[, \\
  2\omega''(\xi), & \quad \alpha=2,
  \end{cases}
\end{equation}
and $B_\alpha>0$.
\end{lemma}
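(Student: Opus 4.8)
The plan is to isolate the endpoint $\alpha=2$, where $-|D|^2=\Delta$ reduces everything to second derivatives, and to handle the range $\alpha\in\,]0,2[$ through the singular-integral representation of Lemma \ref{lem DalpExp}, in both cases reducing to a one-dimensional computation along the segment joining $x$ and $y$. Since $|D|^\alpha$ commutes with translations and rotations, I would first normalize so that $x=(\xi/2,0)$ and $y=(-\xi/2,0)$; the hypothesis that $f$ obeys $\omega$ but attains it at this pair means $f(x)-f(y)=\omega(\xi)$ while $f(a)-f(b)\le\omega(|a-b|)$ for all $a,b$.

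For $\alpha=2$ I would estimate $\partial_{11}f$ and $\partial_{22}f$ separately. In the transverse direction, the map $t\mapsto f(x+te_2)-f(y+te_2)$ is bounded by $\omega(\xi)$ (the arguments always differ by $\xi$) with equality at $t=0$, so it has an interior maximum there, giving $\partial_{22}f(x)-\partial_{22}f(y)\le 0$. In the axial direction, the maps $s\mapsto f(x+se_1)-f(y)$ and $s\mapsto f(x)-f(y-se_1)$ are each bounded by $\omega(\xi+s)$ with equality at $s=0$; comparing second derivatives at the maximum yields $\partial_{11}f(x)\le\omega''(\xi)$ and $-\partial_{11}f(y)\le\omega''(\xi)$, hence $\partial_{11}f(x)-\partial_{11}f(y)\le 2\omega''(\xi)$. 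Summing the two contributions gives exactly $\Psi_2(\xi)=2\omega''(\xi)$.

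For $\alpha\in\,]0,2[$, Lemma \ref{lem DalpExp} together with the cancellation of the odd first-order term under $z\mapsto -z$ furnishes the symmetric representation
\[
  -|D|^\alpha f(x)=\frac{c_\alpha}{2}\int_{\mathbb{R}^2}\frac{f(x+z)+f(x-z)-2f(x)}{|z|^{2+\alpha}}\,\mathrm{d}z,
\]
which converges absolutely since the numerator is $O(|z|^2)$ near the origin and bounded at infinity. Subtracting the analogous expression at $y$ and using $f(x)-f(y)=\omega(\xi)$, I would regroup the four function values across the two points as $[f(x+z)-f(y-z)]+[f(x-z)-f(y+z)]-2\omega(\xi)$ and apply the modulus of continuity to each bracket. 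As the arguments inside the brackets differ by $\xi e_1\pm 2z$, this bounds the difference by the purely geometric integral
\[
  [-|D|^\alpha f](x)-[-|D|^\alpha f](y)\le B_\alpha\int_{\mathbb{R}^2}\frac{\omega(|\xi e_1+2z|)+\omega(|\xi e_1-2z|)-2\omega(\xi)}{|z|^{2+\alpha}}\,\mathrm{d}z.
\]

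The last and most delicate step is to reduce this two-dimensional integral to the one-dimensional expression $\Psi_\alpha(\xi)$ with kernel $\eta^{-1-\alpha}$. I would split the domain according to the geometry of the foci $\pm\frac{\xi}{2}e_1$: in the near region the arguments $\xi\pm2\eta$ stay nonnegative and concavity of $\omega$ makes the integrand $\omega(\xi+2\eta)+\omega(\xi-2\eta)-2\omega(\xi)$ nonpositive, reproducing the first integral of $\Psi_\alpha$; in the far region the crude bound above is too lossy, and after integrating out the transverse contribution one obtains the sharper \emph{difference} form $\omega(\xi+2\eta)-\omega(2\eta-\xi)-2\omega(\xi)$. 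I expect this far-field reduction from two dimensions to one to be the main obstacle, and I would carry it out exactly as in the dissipation estimate of Kiselev--Nazarov--Volberg \cite{KisNV}, whose computation transfers here because it rests only on the monotonicity and concavity of $\omega$ and not on any structural feature of the equation.
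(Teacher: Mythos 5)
Your treatment of the endpoint $\alpha=2$ is correct and is the standard second-derivative-test argument: the transverse map has an interior maximum at $t=0$ giving $\partial_{22}f(x)-\partial_{22}f(y)\le 0$, and the two axial comparisons give $\partial_{11}f(x)\le\omega''(\xi)$ and $-\partial_{11}f(y)\le\omega''(\xi)$. The paper itself omits the proof and defers to \cite{KisNV,Kis}, so the only question is whether your route for $\alpha\in\,]0,2[$ actually reaches $\Psi_\alpha$. It does not: there is a genuine gap at the displayed intermediate inequality. Once you bound the numerator by $\omega(|\xi e_1+2z|)+\omega(|\xi e_1-2z|)-2\omega(\xi)$ and integrate this over all of $\mathbb{R}^2$, you have thrown away the nonpositivity of the exact integrand in the transverse directions, and it cannot be recovered. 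For $z=(0,t)$ the majorant equals $2\omega(\sqrt{\xi^2+4t^2})-2\omega(\xi)>0$, so the transverse contribution to your two-dimensional integral is strictly positive; for a linear modulus the axial contribution in the near region is exactly zero, so the right-hand side of your inequality is positive while $\Psi_\alpha(\xi)<0$. Worse, for $\alpha\le 1$ and $\omega$ growing linearly at infinity the majorant is not even integrable against $|z|^{-2-\alpha}$, so the bound is $+\infty$. Your near/far splitting also conflates the one-dimensional quantity $\omega(\xi+2\eta)+\omega(\xi-2\eta)-2\omega(\xi)$ (nonpositive by concavity) with the two-dimensional integrand $\omega(|\xi e_1+2z|)+\omega(|\xi e_1-2z|)-2\omega(\xi)$, which is nonpositive only on the axis. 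Finally, no amount of ``integrating out the transverse variable'' of your majorant can produce the term $-\omega(2\eta-\xi)$ in the far field, since every occurrence of $\omega$ in your bound carries a plus sign.

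The reduction in \cite{KisNV,Kis} does not pass through a crosswise application of the modulus in all directions. Writing $[-|D|^\alpha f](x)-[-|D|^\alpha f](y)$ with the kernel \emph{difference} $|z-x|^{-2-\alpha}-|z-y|^{-2-\alpha}$ (or, more safely, with the semigroup kernels $K_\alpha(t,z-x)-K_\alpha(t,z-y)$ followed by $t\to 0^+$), one exploits the antisymmetry of this difference under the reflection $z\mapsto\bar z=(-z_1,z_2)$ across the perpendicular bisector of $[x,y]$ together with its positivity on the half-plane $\{z_1>0\}$: pairing $z$ with $\bar z$, the modulus is applied only to $f(z)-f(\bar z)$ with $|z-\bar z|=2z_1$, and the transverse variable is integrated out of the \emph{kernel difference}, whose marginal is the corresponding one-dimensional kernel difference. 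It is this subtraction of shifted one-dimensional kernels, after the change of variables $\eta\mapsto\eta+\xi$, that simultaneously produces the kernel $\eta^{-1-\alpha}$ and the minus sign in front of $\omega(2\eta-\xi)$ for $\eta>\xi/2$. So the step you identify as ``the main obstacle'' is indeed the whole content of the lemma, and the bound you propose to feed into it is already too weak; you would need to restructure the argument around the reflection rather than around your displayed majorization.
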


The proof is essentially contained in \cite{KisNV,Kis}, and we omit the details here.

At last, we state a simple lemma concerning the function having MOC.
\begin{lemma}\label{lem MudLem}
Let $\omega$ be a MOC which in addition satisfies that
\begin{equation*}
  \omega(0)=0, \quad \omega'(0)<\infty,\quad \textrm{and} \;\; \omega''(0+)=-\infty.
\end{equation*}
If the real scalar function $f\in C^2_b(\mathbb{R}^2)$ obeys the MOC $\omega$, then for every $r\in ]0,\infty[$, we have
\begin{equation*}
  \|\nabla f\|_{L^\infty(B_r)} < \omega'(0).
\end{equation*}
\end{lemma}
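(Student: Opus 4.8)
The plan is to first establish the non-strict pointwise bound $|\nabla f(x)|\le \omega'(0)$ for every $x\in\mathbb{R}^2$, then to upgrade it to a \emph{strict} pointwise bound by exploiting the hypothesis $\omega''(0+)=-\infty$ against the $C^2$ regularity of $f$, and finally to pass to $\|\nabla f\|_{L^\infty(B_r)}$ by a compactness argument on $\overline{B_r}$.

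For the first step, fix $x\in\mathbb{R}^2$ and a unit vector $e$. The modulus-of-continuity bound gives $|f(x+he)-f(x)|\le \omega(h)$, so dividing by $h>0$ and letting $h\to 0^+$ yields $|\partial_e f(x)|\le \lim_{h\to 0^+}\omega(h)/h=\omega'(0)$, using $\omega(0)=0$. Choosing $e=\nabla f(x)/|\nabla f(x)|$ when $\nabla f(x)\neq 0$ gives $|\nabla f(x)|\le\omega'(0)$ everywhere. I note in passing that the hypotheses force $\omega'(0)>0$: if $\omega'(0)=0$, then concavity (so $\omega'$ non-increasing) together with monotonicity ($\omega'\ge 0$) would force $\omega'\equiv 0$, hence $\omega\equiv 0$, contradicting $\omega''(0+)=-\infty$.

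The key step is the strict inequality. Suppose toward a contradiction that $|\nabla f(x_0)|=\omega'(0)$ at some point $x_0\in\mathbb{R}^2$; then $\nabla f(x_0)\neq 0$, and I set $e\triangleq \nabla f(x_0)/|\nabla f(x_0)|$ and $g(h)\triangleq f(x_0+he)-f(x_0)$. Thus $g(0)=0$, $g'(0)=\nabla f(x_0)\cdot e=\omega'(0)$, and $g(h)\le \omega(h)$ by the MOC. On one hand, since $f\in C^2_b$ we have $\|\nabla^2 f\|_{L^\infty}\le C<\infty$, so Taylor's theorem gives $g(h)\ge \omega'(0)h-\tfrac{C}{2}h^2$. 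On the other hand, I use $\omega''(0+)=-\infty$: for any $M>0$ there is $\delta>0$ with $\omega''(t)\le -M$ on $(0,\delta)$, and integrating twice (using $\omega(0)=0$ and $\omega'(0)<\infty$) yields $\omega(h)\le \omega'(0)h-\tfrac{M}{2}h^2$ for $0<h<\delta$. Chaining the three estimates gives $\omega'(0)h-\tfrac C2 h^2\le g(h)\le\omega(h)\le \omega'(0)h-\tfrac M2 h^2$, hence $M\le C$, which is absurd once $M>C$ is chosen. Therefore $|\nabla f(x)|<\omega'(0)$ for every $x\in\mathbb{R}^2$.

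Finally, since $f\in C^2_b$ the function $x\mapsto|\nabla f(x)|$ is continuous and attains its maximum over the compact set $\overline{B_r}$ at some $x_*$, so the strict pointwise bound gives $\|\nabla f\|_{L^\infty(B_r)}=|\nabla f(x_*)|<\omega'(0)$, as claimed; the restriction to a ball is exactly what supplies the compactness needed to turn the pointwise strict inequality into a strict bound on the supremum. The main obstacle is the strict-inequality step: the whole point is to quantify how the super-quadratic concavity encoded in $\omega''(0+)=-\infty$ beats the at-most-quadratic deviation of the $C^2$ function $f$ from its tangent line, and the comparison of the two second-order coefficients $M$ versus $C$ is precisely where the contradiction is produced.
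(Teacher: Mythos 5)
Your proof is correct and rests on exactly the same mechanism as the paper's: comparing the at-most-quadratic Taylor deviation of the $C^2_b$ function $f$ from its tangent line against the super-quadratic concavity of $\omega$ forced by $\omega''(0+)=-\infty$. The only difference is organizational (you prove the strict pointwise bound everywhere by contradiction and then invoke compactness of $\overline{B}_r$, whereas the paper runs the same estimate directly at the maximizer of $|\nabla f|$ on $\overline{B}_r$), so this is essentially the paper's argument.
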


\begin{proof}[Proof of Lemma \ref{lem MudLem}]
  The proof is similar to that in \cite{KisNV}. Indeed, since $|\nabla f|$ is a continuous function on $B_r$, we suppose that
it attains the maximum at $x\in \overline{B}_r$. Let $y= x+ \xi \ell$ with $\xi>0$ and
$\ell=\frac{\nabla f(x)}{|\nabla f(x)|}$, and by definition we have
$f(y)-f(x)\leq \omega(\xi)$. According to the Taylor formula, the left side of the inequality is bounded
from below by $|\nabla f(x)|\xi -\frac{1}{2}\|\nabla^2 f\|_{L^\infty}\xi^2$, while the right side
is bounded from above by $\omega'(0)\xi + g(\xi)\xi^2$ with $g(\xi)\rightarrow -\infty$ as $\xi\rightarrow 0+$.
Thus $|\nabla f(x)|\leq \omega'(0) + \xi(g(\xi)+ \frac{1}{2} \|\nabla^2 f\|_{L^\infty})$, and as $\xi$ small enough
the assertion follows.
\end{proof}

%%%%%%%%%%%%%%%%%%%%%%%%%%%%%%%%%%%%%%%%%%%%%%%%%%%%%%%%%%%%%%%%%%%%%%%%%%%%%%%%%%%%%%%%%%%%%%%%%%%%%%%%%%%%%%%%
\section{Proof of Theorem \ref{thm local}}\label{sec local}
\setcounter{section}{3}\setcounter{equation}{0}

Denote $\theta= \theta^+-\theta^-$ and we rewrite the system \eqref{eq 2} as follows
\begin{equation}\label{eq 3.1}
\begin{cases}
  \partial_t \theta^\pm + \partial_1( u^\pm_1 \, \theta^\pm ) + \kappa |D|^\alpha \theta^\pm =0, \quad \alpha\in ]0,2],
  \, \kappa\geq 0, \\
  u^\pm_1= \pm  \mathcal{R}_1 \mathcal{R}_2^2 |D|^{-1} \theta,\quad \theta^\pm|_{t=0} = \theta^\pm_0,
\end{cases}
\end{equation}
where the equation of $\theta^\pm$ should be understood as two equations of $\theta^+$ and $\theta^-$ respectively.

\subsection{A priori estimates}\label{subsec Apri}
In this subsection, we \textit{a priori} suppose that $\theta^\pm\in C(\mathbb{R}^+; H^m\cap L^p)$ and
$\theta\in C(\mathbb{R}^+; H^m\cap L^p)$ with $m>2$, $p\in ]1,2[$ are independent functions
and they satisfy the equation \eqref{eq 3.1}. %Note that we do not call for that $\theta =\theta^+ -\theta^-$.

We first obtain the $L^q$ estimate of $\theta^\pm$ with $q\in [p,\infty[$.
Let $\chi $ be the cut-off function introduced in the subsection \ref{subsec Pre1} and $\chi_R(\cdot)\triangleq\chi(\frac{\cdot}{R})$ for $R>0$.
Multiplying the equations of $\theta^\pm$ by $|\theta^\pm|^{q-2}\theta^\pm \chi_R $
and integrating over the spatial variable, we get
\begin{equation*}
\begin{split}
  \frac{1}{q}\frac{d}{dt} \Big(\int_{\mathbb{R}^2} |\theta^\pm|^q(t,x) \chi_R(x) \mathrm{d}x\Big)
  = &  - \int_{\mathbb{R}^2} \partial_1 (u^\pm_1 \theta^\pm)(t,x)\, |\theta^\pm|^{q-2}\theta^\pm(t,x) \chi_R(x)\mathrm{d}x -\\
  & - \kappa\int_{\mathbb{R}^2} |D|^\alpha \theta^\pm(t,x)\, |\theta^\pm|^{q-2}\theta^\pm(t,x)\chi_R(x)\mathrm{d}x \\
  \triangleq & \, I^\pm(t) + II^\pm(t).
\end{split}
\end{equation*}
For $I^\pm(t)$, from the integration by parts, we have
\begin{equation*}
\begin{split}
  I^\pm(t) &  = -\int_{\mathbb{R}^2} (\partial_1 u^\pm_1) |\theta^\pm|^q \chi_R(x)\mathrm{d}x
  - \int_{\mathbb{R}^2} u^\pm_1 \partial_1\theta^\pm \, |\theta^\pm|^{q-2}\theta^\pm \,\chi_R(x)\mathrm{d}x \\
  & = -(1-1/q) \int_{\mathbb{R}^2} (\partial_1 u^\pm_1) |\theta^\pm|^q \chi_R(x)\mathrm{d}x
  + (1/q) R^{-1}\int_{\mathbb{R}^2} u^\pm_1\,|\theta^\pm|^q\, \partial_1\chi\big(\frac{x}{R}\big) \mathrm{d}x \\
  & \leq (1-1/q)\|\partial_1 u^\pm_1(t)\|_{L^\infty} \|\theta^\pm(t)\|^q_{L^q} + (qR)^{-1}\|\partial_1\chi\|_{L^\infty}
  \|u^\pm_1(t)\|_{L^\infty} \|\theta^\pm(t)\|_{L^q}^q \\
  & \lesssim \big(\|\partial_1 u^\pm_1(t)\|_{L^\infty}+ (qR)^{-1} \|\theta(t)\|_{L^p\cap H^m}\big)
  \|\theta^\pm(t)\|_{L^q}^q,
\end{split}
\end{equation*}
where in the last line we have used the following estimation %(by Bernstein's inequality and Hardy-Littlewood-Sobolev's inequality)
\begin{equation}\label{eq uLinf1}
\begin{split}
  \|u^\pm_1(t)\|_{L^\infty_x} & \leq \|\mathcal{R}_1\mathcal{R}_2^2|D|^{-1}\Delta_{-1}\theta(t)\|_{L^\infty_x}+
  \sum_{j\geq 0}\|\mathcal{R}_1\mathcal{R}_2^2|D|^{-1}\Delta_j\theta(t)\|_{L^\infty_x} \\
  & \lesssim \||D|^{-1}\theta(t)\|_{L^{2p/(2-p)}} + \sum_{j\geq 0} 2^{-j m} (2^{j m}\|\Delta_j \theta(t)\|_{L^2}) \\
  & \lesssim \|\theta(t)\|_{L^p\cap H^m}.
\end{split}
\end{equation}
For $II^\pm(t)$, by virtue of the following pointwise inequality (cf. \cite[Proposition 3.3]{Ju})
\begin{equation*}
  |f(x)|^\beta f(x)\, \big(|D|^\alpha f\big)(x) \geq \frac{1}{\beta+2} \big(|D|^\alpha |f|^{\beta+2}\big) (x), \quad
  \forall \alpha\in [0,2],\, \beta\in [-1,\infty[,
\end{equation*}
we have
\begin{equation*}
\begin{split}
  II^\pm(t) & \leq - \frac{\kappa}{ q} \int_{\mathbb{R}^2} \big(|D|^\alpha |\theta^\pm|^q \big)(t,x)\, \chi_R(x)\mathrm{d}x \\
  & \leq -\frac{\kappa}{ q} \int_{\mathbb{R}^2} |\theta^\pm|^q (t,x)\, \big(|D|^\alpha \chi_R\big)(x)\mathrm{d}x \\
  & \leq \frac{\kappa}{ q} R^{-\alpha}  \||D|^\alpha\chi\|_{L^\infty} \|\theta^\pm(t)\|_{L^q}^q.
\end{split}
\end{equation*}
Integrating in time and gathering the upper results, and from the support property of $\chi$, we get
\begin{equation*}
\begin{split}
  \int_{|x|\leq R}|\theta^\pm(t,x)|^q \mathrm{d}x \leq\, & \int_{\mathbb{R}^2}|\theta^\pm(t,x)|^q \chi_R(x)\mathrm{d}x \\
  \leq\, & \|\theta^\pm_0\|_{L^q}^q + q C \int_0^t \|\partial_1 u^\pm_1(\tau)\|_{L^\infty}
   \|\theta^\pm(\tau)\|_{L^q}^q \mathrm{d}\tau \\  & + C \int_0^t
  \big(R^{-1}\|\theta^\pm(\tau)\|_{L^p\cap H^m} + \kappa R^{-\alpha} \big) \|\theta^\pm(\tau)\|_{L^q}^q \mathrm{d}\tau.
\end{split}
\end{equation*}
According to the monotone convergence theorem and $\theta^\pm\in C(\mathbb{R}^+; H^m\cap L^p)$,
and by passing $R$ to $\infty$, we have that for every $t\in\mathbb{R}^+$ and $q\in [p,\infty[$,
\begin{equation}\label{eq Apri1}
  \int_{\mathbb{R}^2}|\theta^\pm(t,x)|^q \mathrm{d}x
  \leq\,  \|\theta^\pm_0\|_{L^q}^q + q C \int_0^t \|\partial_1 u^\pm_1(\tau)\|_{L^\infty}
  \|\theta^\pm(\tau)\|_{L^q}^q \mathrm{d}\tau \triangleq F^\pm(t)^q.
\end{equation}
Since
\begin{equation*}
\begin{split}
  q F^\pm(t)^{q-1}\frac{d}{dt}F^\pm(t)=\frac{d}{dt}(F^\pm(t)^q) & = q C  \|\partial_1 u^\pm_1(t)\|_{L^\infty}
  \|\theta^\pm(t)\|_{L^q}^q \\
  & \leq\, q C  \|\partial_1 u^\pm_1(t)\|_{L^\infty} \|\theta^\pm(t)\|_{L^q} F^\pm(t)^{q-1},
\end{split}
\end{equation*}
we have
\begin{equation*}
  F^\pm(t)\leq F^\pm(0) + C\int_0^t \|\partial_1 u^\pm_1(\tau)\|_{L^\infty}\|\theta^\pm(\tau)\|_{L^q}\mathrm{d}\tau.
\end{equation*}
This implies that for every $t\in\mathbb{R}^+$ and $q\in [p,\infty[$,
\begin{equation}\label{eq Apri2}
  \|\theta^\pm(t)\|_{L^q}\leq \|\theta^\pm_0\|_{L^q} + C\int_0^t \|\partial_1
  u^\pm_1(\tau)\|_{L^\infty}\|\theta^\pm(\tau)\|_{L^q}\mathrm{d}\tau,
\end{equation}
where $C$ is independent of $q$.

Next we consider the $H^m$ estimate of $\theta^\pm$ with $m>2$. For every $j\in\mathbb{N}$, we apply the dyadic operator $\Delta_j$ to
the equations of $\theta^\pm$ in \eqref{eq 3.1} to get
\begin{equation*}
  \partial_t \Delta_j\theta^\pm  + \kappa |D|^\alpha \Delta_j \theta^\pm =- \Delta_j\partial_1\big(u^\pm_1\, \theta^\pm\big).
\end{equation*}
Multiplying both sides of the upper equations by $\Delta_j\theta^\pm$ and integrating over the spatial variable,
we obtain
\begin{equation*}
  \frac{1}{2}\frac{d}{dt}\|\Delta_j\theta^\pm(t)\|_{L^2}^2 + \kappa \||D|^\frac{\alpha}{2}\Delta_j\theta^\pm(t)\|_{L^2}^2
  = \int_{\mathbb{R}^2}\Delta_j \partial_1 \big(u^\pm_1\,\theta^\pm\big)(t,x)\,\Delta_j \theta^\pm(t,x) \mathrm{d}x.
\end{equation*}
Integrating on the time variable over $[0,t]$ leads to
\begin{equation*}
  \|\Delta_j\theta^\pm(t)\|_{L^2}^2 + 2\kappa \||D|^\frac{\alpha}{2}\Delta_j\theta^\pm\|_{L^2_t L^2}^2
  \leq \|\Delta_j\theta^\pm_0\|_{L^2}^2 +
  2\int_0^t \Big| \int_{\mathbb{R}^2}\Delta_j\partial_1 \big(u^\pm_1\,\theta^\pm\big)(\tau,x)\,\Delta_j \theta^\pm(\tau,x)
  \mathrm{d}x\Big| \mathrm{d}\tau.
\end{equation*}
Then, by multiplying both sides of the above equations by $2^{2jm}$ and summing over $j\in \mathbb{N}$, and from Lemma \ref{lem ExiKey}, we find
\begin{equation}\label{eq Apri3}
\begin{split}
  & \sum_{j\in\mathbb{N}}2^{2jm}\|\Delta_j\theta^\pm(t)\|_{L^2}^2
  + 2\kappa \sum_{j\in\mathbb{N}} 2^{2jm}\||D|^\frac{\alpha}{2}\Delta_j\theta^\pm\|_{L^2_t L^2}^2\leq  \\
  \leq\, \sum_{j\in\mathbb{N}}2^{2jm}& \|\Delta_j\theta^\pm_0\|_{L^2}^2
  + C\int_0^t \Big(\|\nabla u^\pm_1\|_{L^\infty}\|\theta^\pm\|_{H^m}^2
  + \|\theta^\pm\|_{L^\infty} \|\nabla u^\pm_1\|_{H^m}\|\theta^\pm\|_{H^m}\Big)(\tau) \mathrm{d}\tau.
\end{split}
\end{equation}
For $j=-1$, from \eqref{eq Apri1} and Bernstein's inequality, we directly have
\begin{equation*}
  \|\Delta_{-1}\theta^\pm(t)\|_{L^2}^2\leq \|\theta^\pm(t)\|_{L^2}^2
  \leq\,  \|\theta^\pm_0\|_{L^2}^2 + C \int_0^t \|\partial_1 u^\pm_1(\tau)\|_{L^\infty}
  \|\theta^\pm(\tau)\|_{L^2}^2 \mathrm{d}\tau.
\end{equation*}
Gathering the upper two estimates, and from $\|\cdot\|_{B^m_{2,2}}\approx \|\cdot\|_{H^m}$, we get
\begin{equation*}
  \|\theta^\pm(t)\|_{H^m}^2 \leq C_0 \|\theta^\pm_0\|_{H^m}^2 +  C\int_0^t \Big(\|\nabla u^\pm_1\|_{L^\infty}\|\theta^\pm\|_{H^m}^2
  + \|\theta^\pm\|_{L^\infty} \|\nabla u^\pm_1\|_{H^m}\|\theta^\pm\|_{H^m}\Big)(\tau) \mathrm{d}\tau.
\end{equation*}
In a similar way as obtaining \eqref{eq Apri2} from \eqref{eq Apri1}, we see that
\begin{equation}\label{eq Apri3.1}
  \|\theta^\pm(t)\|_{H^m} \leq C_0 \|\theta^\pm_0\|_{H^m} +  C\int_0^t \Big(\|\nabla u^\pm_1\|_{L^\infty}\|\theta^\pm\|_{H^m}
  + \|\theta^\pm\|_{L^\infty} \|\nabla u^\pm_1\|_{H^m}\Big)(\tau) \mathrm{d}\tau,
\end{equation}
with $C_0\geq 1$. From the Sobolev embedding and Calder\'on-Zygmund theorem, we further deduce
\begin{equation}\label{eq Apri3.3}
  \|\theta^\pm(t)\|_{H^m} \leq C_0 \|\theta^\pm_0\|_{H^m} +
  C \int_0^t \|\theta(\tau)\|_{H^m}\|\theta^\pm(\tau)\|_{H^m} \mathrm{d}\tau.
\end{equation}
Gronwall's inequality ensures that
\begin{equation*}
  \|\theta^\pm(t)\|_{H^m} \leq C_0 \|\theta^\pm_0\|_{H^m}
  e^{C \int_0^t \|\theta(\tau)\|_{H^m}\mathrm{d}\tau}.
\end{equation*}

Now, by combining \eqref{eq Apri2} with \eqref{eq Apri3.1}, we have
\begin{equation}\label{eq Apri4}
  \|\theta^\pm(t)\|_{H^m\cap L^p} \leq C_0 \|\theta^\pm_0\|_{H^m\cap L^p}
  +  C\int_0^t \Big(\|\nabla u^\pm_1\|_{L^\infty}\|\theta^\pm\|_{H^m\cap L^p}
  + \|\theta^\pm\|_{L^\infty} \|\nabla u^\pm_1\|_{H^m}\Big)(\tau) \mathrm{d}\tau.
\end{equation}
This estimate also yields
\begin{equation}\label{eq Apri5}
\begin{split}
  \|\theta^+(t)\|_{H^m\cap L^p}+ \|\theta^-(t)\|_{H^m\cap L^p} \leq&
  C_0 ( \|\theta^+_0\|_{H^m\cap L^p} +\|\theta^-_0\|_{H^m\cap L^p}) +\\
  & + C_1 \int_0^t \|\theta(\tau)\|_{H^m}\big(\|\theta^+\|_{H^m\cap L^p}+\|\theta^-\|_{H^m\cap L^p}\big)(\tau) \mathrm{d}\tau.
\end{split}
\end{equation}
Hence, for every $T>0$ satisfying that
\begin{equation}\label{eq T}
  T \leq \frac{1}{4C_0C_1(\|\theta^+_0\|_{H^m \cap L^p}+ \|\theta^-_0\|_{H^m\cap L^p})},
\end{equation}
and $\theta$ satisfying that
\begin{equation}\label{eq aprCodi}
  \|\theta\|_{L^\infty_T H^m} \leq 2 C_0 (\|\theta^+_0\|_{H^m \cap L^p}+ \|\theta^-_0\|_{H^m\cap L^p}),
\end{equation}
we have
\begin{equation}\label{eq aprCocl}
  \|\theta^+\|_{L^\infty_T(H^m\cap L^p)}+ \|\theta^-\|_{L^\infty_T(H^m\cap L^p)} \leq 2 C_0 ( \|\theta^+_0\|_{H^m\cap L^p} +\|\theta^-_0\|_{H^m\cap L^p}).
\end{equation}
From \eqref{eq Apri2} and \eqref{eq Apri3}, we moreover obtain
\begin{equation*}
  \kappa\|\theta^+\|_{L^2_T H^{m+\frac{\alpha}{2}}}+ \kappa\|\theta^-\|_{L^2_T H^{m+\frac{\alpha}{2}}}
  \lesssim_{T,\|\theta^\pm_0\|_{H^m\cap L^p}} 1.
\end{equation*}

\subsection{Uniqueness}
Assume that $(\theta^{1,+},\theta^{1,-})$ and $(\theta^{2,+},\theta^{2,-})$ belonging to $C([0,T]; H^m\cap L^p)$ ($m>2$, $p\in]1,2[$) are two solutions to the system \eqref{eq 2} with initial data $(\theta^{1,+}_0,\theta^{1,-}_0)$ and $(\theta^{2,+}_0,\theta^{2,-}_0)$ respectively. Denote
$\delta\theta^\pm\triangleq \theta^{1,\pm}-\theta^{2,\pm}$, $\delta\theta^\pm_0\triangleq \theta^{1,\pm}_0-\theta^{2,\pm}_0$,
$\theta^i\triangleq \theta^{i,+} -\theta^{i,-}$, $u^{i,\pm}_1\triangleq\pm \mathcal{R}_1\mathcal{R}_2^2|D|^{-1}\theta^i$ for $i=1,2$ and $\delta\theta\triangleq \theta^1-\theta^2= \delta\theta^+-\delta\theta^-$,
$\delta u^\pm_1\triangleq u^{1,\pm}_1-u^{2,\pm}_2=\pm \mathcal{R}_1\mathcal{R}_2^2|D|^{-1}\delta\theta$.
Then we write the equations of $\delta\theta^\pm$ as follows
\begin{equation*}
\begin{split}
  \partial_t \delta\theta^\pm + \partial_1(u^{2,\pm}_1\,\delta\theta^\pm) + \kappa |D|^\alpha \delta\theta^\pm  & =
  -\partial_1(\delta u^\pm_1\, \theta^{1,\pm}) \\
  \delta\theta^\pm |_{t=0} & = \delta\theta^\pm_0.
\end{split}
\end{equation*}
For $R>0$, let $\chi_R$ be the cut-off function introduced in the subsection \ref{subsec Apri}, then we multiply both sides of
the upper equations by $|\delta\theta^\pm|^{p-2}\delta\theta^\pm \chi_R$ and integrate on the spatial variable to obtain
\begin{equation*}
\begin{split}
  \frac{1}{p} \frac{d}{dt}\Big(\int_{\mathbb{R}^2}|\delta\theta^\pm(t,x)|^p \chi_R(x)\mathrm{d}x\Big)= &
  -\int_{\mathbb{R}^2} \partial_1\big(u^{2,\pm}_1\,\delta\theta^\pm \big)(t,x)\,
  |\delta\theta^\pm|^{p-2}\delta\theta^\pm(t,x) \chi_R(x)\mathrm{d}x - \\
  & - \kappa\int_{\mathbb{R}^2} |D|^\alpha\delta\theta^\pm(t,x) \,
  |\delta\theta^\pm|^{p-2}\delta\theta^\pm(t,x) \chi_R(x)\mathrm{d}x - \\
  & -\int_{\mathbb{R}^2} \partial_1\big(\delta u^\pm_1\,\theta^{1,\pm} \big)(t,x)\,
  |\delta\theta^\pm|^{p-2}\delta\theta^\pm(t,x) \chi_R(x)\mathrm{d}x \\
  \triangleq & A^\pm_1(t) + A^\pm_2(t) + A^\pm_3(t).
\end{split}
\end{equation*}
Similarly as estimating $I^\pm(t)$ and $II^\pm(t)$ in the subsection \ref{subsec Apri}, we get
\begin{equation*}
\begin{split}
  A^\pm_1(t)& \leq C\big(\|\partial_1 u^{2,\pm}_1(t)\|_{L^\infty} + R^{-1}\|u^{2,\pm}_1(t)\|_{L^\infty} \big)
  \|\delta\theta^\pm(t)\|_{L^p}^p \\
  & \leq C \big(\|\theta^2(t)\|_{H^m}+R^{-1}\|\theta^2(t)\|_{H^m\cap L^p}\big)\|\delta\theta^\pm(t)\|_{L^p}^p ,
\end{split}
\end{equation*}
and
\begin{equation*}
  A^\pm_2(t)\leq C\kappa R^{-\alpha} \|\delta\theta^\pm(t)\|_{L^p}^p.
\end{equation*}
For $A^\pm_3(t)$, by virtue of the H\"older inequality, Calder\'on-Zygmund theorem and
Hardy-Littlewood-Sobolev inequality, we find
\begin{equation*}
\begin{split}
  A^\pm_3(t) & = -\int_{\mathbb{R}^2} \big((\partial_1\delta u^\pm_1)\,\theta^{1,\pm}
  + \delta u^\pm \, \partial_1\theta^{1,\pm}\big)(t,x)\,
  |\delta\theta^\pm|^{p-2}\delta\theta^\pm(t,x) \chi_R(x)\mathrm{d}x \\
  & \leq \Big(\|\partial_1\delta u^\pm_1\|_{L^p}\|\theta^{1,\pm}\|_{L^\infty}+
  \|\delta u^\pm_1\|_{L^{\frac{2p}{2-p}}} \|\partial_1\theta^{1,\pm}\|_{L^2}\Big)
  \|\delta\theta^\pm\|_{L^p}^{p-1}\|\chi_R\|_{L^\infty} \\
  & \leq C \|\theta^{1,\pm}(t)\|_{H^m} \|\delta \theta(t)\|_{L^p} \|\delta\theta^\pm(t)\|_{L^p}^{p-1}.
\end{split}
\end{equation*}
Collecting the above estimates, and in a similar way as obtaining \eqref{eq Apri1}, we infer that
\begin{equation*}
  \|\delta\theta^\pm(t)\|_{L^p}^p\leq \|\delta\theta^\pm_0\|_{L^p}^p + p C \int_0^t
  \Big(\|\theta^2\|_{H^m} \|\delta\theta^\pm\|_{L^p} +
   \|\theta^{1,\pm}\|_{H^m} \|\delta \theta\|_{L^p}\Big)(\tau) \|\delta\theta^\pm(\tau)\|_{L^p}^{p-1}
   \mathrm{d}\tau.
\end{equation*}
This estimate implies that
\begin{equation}\label{eq UniEst1}
  \|\delta\theta^\pm(t)\|_{L^p}\leq \|\delta\theta^\pm_0\|_{L^p} + C \int_0^t
  \Big(\|\theta^2(\tau)\|_{H^m} \|\delta\theta^\pm(\tau)\|_{L^p} +
   \|\theta^{1,\pm}(\tau)\|_{H^m} \|\delta \theta(\tau)\|_{L^p}\Big)\mathrm{d}\tau.
\end{equation}
Hence, summing over the upper estimates of $\delta\theta^+$ and $\delta\theta^-$, we have
\begin{equation*}
  \|\delta\theta^+(t)\|_{L^p} + \|\delta\theta^-(t)\|_{L^p}\leq  \|\delta\theta^+_0\|_{L^p} + \|\delta\theta^-_0\|_{L^p}
  + \int_0^t C(\tau) \big(\|\delta\theta^+(\tau)\|_{L^p} + \|\delta\theta^-(\tau)\|_{L^p}\big)\mathrm{d}\tau,
\end{equation*}
where $C(\tau)= C \|\theta^2(\tau)\|_{H^m} + C \|\theta^{1,+}(\tau)\|_{H^m} + C \|\theta^{1,-}(\tau)\|_{H^m}$.
%linearly depends on $\|\theta^{i,\pm}(\tau)\|_{H^m}$ ($i=1,2$).
Gronwall's inequality yields that for every $t\in[0,T]$
\begin{equation*}
   \|\delta\theta^+(t)\|_{L^p} + \|\delta\theta^-(t)\|_{L^p}\leq \big( \|\delta\theta^+_0\|_{L^p} + \|\delta\theta^-_0\|_{L^p}\big)
   e^{T\|C(t)\|_{L^\infty_T}},
\end{equation*}
and this clearly guarantees the uniqueness.

\subsection{Existence}
We construct the sequences of approximate solutions $\{(\theta^{n,+},\theta^{n,-})\}_{n\in\mathbb{N}}$ as follows.
Denote $\theta^{0,\pm}(t,x)= e^{-\kappa t |D|^\alpha}\theta^\pm_0(x)$, and for each $n\in\mathbb{N}$,
$(\theta^{n+1,+},\theta^{n+1,-})$ solves the following system
\begin{equation}\label{eq apprx}
\begin{cases}
  \partial_t \theta^{n+1,\pm} + \partial_1 (u^{n,\pm}_1\, \theta^{n+1,\pm}) + \kappa |D|^\alpha\theta^{n+1,\pm}=0, \\
  u^{n,\pm}_1= \pm \mathcal{R}_1\mathcal{R}_2^2 |D|^{-1}(\theta^{n,+}-\theta^{n,-}), \\
  \theta^{n+1, \pm}|_{t=0}=\theta^\pm_0.
\end{cases}
\end{equation}
Since $\theta^\pm_0\in H^m\cap L^p$ with $m>2$, $p\in ]1,2[$, we know that
$\theta^{0,\pm}\in C(\mathbb{R}^+;H^m\cap L^p)$.
Now assuming that for each $n\in\mathbb{N}$, $\theta^{n,\pm}\in C(\mathbb{R}^+;H^m\cap L^p)$,
we further show that $\theta^{n+1,\pm}\in C(\mathbb{R}^+; H^m\cap L^p)$.
By a classical process, it is not hard to show that
$\theta^{n+1,\pm}\in C(\mathbb{R}^+; H^m)$. To prove that $\theta^{n+1,\pm}\in C(\mathbb{R}^+; L^p)$,
we use the Duhamel's formula
\begin{equation*}
  \theta^{n+1,\pm}(t,x)=e^{-\kappa t |D|^\alpha} \theta^\pm_0(x) + \int_0^t e^{-\kappa(t-\tau) |D|^\alpha}
  f^{n+1,\pm}(\tau,x)\mathrm{d}\tau
\end{equation*}
with $f^{n+1,\pm}= \partial_1 (u^{n,\pm}_1\, \theta^{n+1,\pm})$.
By a direct computation, we deduce that for every $t\in [0,\infty[$,
\begin{equation}\label{eq f1pm}
\begin{split}
  \|f^{n+1,\pm}\|_{L^\infty_t L^p} & \leq \|\partial_1 u^{n,\pm}\,\theta^{n+1,\pm}\|_{L^\infty_t L^p}+
  \|u^{n,\pm}\,\partial_1 \theta^{n+1,\pm}\|_{L^\infty_t L^p} \\
  & \leq \|\partial_1 u^{n,\pm}\|_{L^\infty_t L^p} \|\theta^{n+1,\pm} \|_{L^\infty_t L^\infty}
  + \|u^{n,\pm}\|_{L^\infty_t L^{\frac{2p}{2-p}}} \|\partial_1\theta^{n+1,\pm}\|_{L^\infty_t L^2} \\
  & \lesssim \big(\|\theta^{n,+}\|_{L^\infty_t L^p}+\|\theta^{n,-}\|_{L^\infty_t L^p}\big) \|\theta^{n+1,\pm}\|_{L^\infty_t H^m},
\end{split}
\end{equation}
thus
\begin{equation*}
\begin{split}
  \|\theta^{n+1,\pm}(t)\|_{L^p} \lesssim \|\theta^\pm_0\|_{L^p} + t \big(\| \theta^{n,+}\|_{L^\infty_t L^p}
  +\| \theta^{n,-}\|_{L^\infty_t L^p} \big) \|\theta^{n+1,\pm}\|_{L^\infty_t H^m},
\end{split}
\end{equation*}
and this implies that $\theta^{n+1,\pm}\in L^\infty(\mathbb{R}^+; L^p)$. When $\kappa=0$, in a similar manner we can show that
$\theta^{n+1,\pm}\in C(\mathbb{R}^+; L^p)$. When $\kappa>0$, for every $t,s\in [0,\infty[$, $t>s$, we have
\begin{equation*}
\begin{split}
  \theta^{n+1,\pm} (t,x)-\theta^{n+1,\pm}(s,x) = & \big(e^{- \kappa t|D|^\alpha}-e^{-\kappa s|D|^\alpha}\big)\theta^\pm_0(x) +
  \int_s^t e^{-\kappa(t-\tau)|D|^\alpha}f^{n+1,\pm}(\tau,x)\mathrm{d}\tau \\
  & + \int_0^s \big(e^{-\kappa (t-\tau)|D|^\alpha}-e^{-\kappa (s-\tau)|D|^\alpha} \big)f^{n+1,\pm}(\tau,x)\mathrm{d}\tau \\
  \triangleq \, & B_1(t,s,x) + B_2(t,s,x) +B_3(t,s,x),
\end{split}
\end{equation*}
It is obvious that
$$ \lim_{t\rightarrow s}\big( \|B_1(t,s,x)\|_{L^p_x}+\|B_2(t,s,x)\|_{L^p_x} \big) =0.$$
For $B_3$, by Bernstein's inequality, Fubini's theorem, Young's inequality and the following estimate (cf. \cite[Proposition 2.2]{HmdK}) that
\begin{equation*}
  \|e^{-h |D|^\alpha}\Delta_j f\|_{L^p}\leq C e^{-c h 2^{j\alpha}}\|\Delta_j f\|_{L^p}, \qquad \forall j\in\mathbb{N},\,p\in[1,\infty],\,h>0,
\end{equation*}
we find that
\begin{equation*}
\begin{split}
  \|B_3(t,s,x)\|_{L^p_x} \leq & \kappa \int_0^s \int_{s-\tau}^{t-\tau} \| e^{-\kappa \tau'|D|^\alpha}
  |D|^\alpha f^{n+1,\pm}(\tau,x)\|_{L^p_x}\mathrm{d}\tau'\mathrm{d}\tau \\
  \leq & \kappa\int_0^s \int_{s-\tau}^{t-\tau} \| e^{-\kappa \tau'|D|^\alpha}
  |D|^\alpha \Delta_{-1}f^{n+1,\pm}(\tau,x)\|_{L^p_x}\mathrm{d}\tau'\mathrm{d}\tau + \\
  & + \kappa\int_0^s \int_{s-\tau}^{t-\tau} \Big(\sum_{j\in\mathbb{N}}\| e^{-\kappa \tau'|D|^\alpha}
  |D|^\alpha \Delta_j f^{n+1,\pm}(\tau,x)\|_{L^p_x}\Big)\mathrm{d}\tau'\mathrm{d}\tau \\
  \lesssim & \kappa s (t-s) \|f^{n+1,\pm}\|_{L^\infty_s L^p_x} +  \kappa (t-s) \sum_{j\in\mathbb{N}}
  \int_0^s e^{-c (s-\tau) 2^{j\alpha}}2^{j\alpha}\|\Delta_j f^{n+1,\pm}(\tau)\|_{L^p_x}\mathrm{d}\tau \\
  \lesssim & \kappa s(t-s) \|f^{n+1,\pm}\|_{L^\infty_s L^p_x} + \kappa (t-s)\sum_{j\in\mathbb{N}} \|\Delta_j f^{n+1,\pm}\|_{L^1_s L^p_x}.
\end{split}
\end{equation*}
Combining the upper estimate with \eqref{eq f1pm} and \eqref{eq exiKey2} yields
\begin{equation*}
  \|B_3(t,s,x)\|_{L^p_x}\leq C \kappa s(t-s)
\end{equation*}
with $C$ depending only on $\|\theta^{n,\pm}\|_{L^\infty_s(H^m\cap L^p)}$ and $\|\theta^{n+1,\pm}\|_{L^\infty_s H^m}$.
Hence $\theta^{n+1,\pm}\in C(\mathbb{R}^+; H^m\cap L^p)$. By induction,
we have $\theta^{n,\pm}\in C(\mathbb{R}^+; H^m\cap L^p)$ for every $n\in\mathbb{N}$.

We also show that $\{(\theta^{n,+},\theta^{n,-})\}_{n\in\mathbb{N}}$ are $n$-uniformly bounded in $C([0,T]; H^m\cap L^p)$
with $T$ defined by \eqref{eq T}, that is,
\begin{equation}\label{eq UnifBd}
  \|\theta^{n,+}\|_{L^\infty_T(H^m\cap L^p)}+ \|\theta^{n,-}\|_{L^\infty_T(H^m\cap L^p)} \leq 2 C_0 ( \|\theta^+_0\|_{H^m\cap L^p} +\|\theta^-_0\|_{H^m\cap L^p}).
\end{equation}
Indeed, from \eqref{eq T}-\eqref{eq aprCocl}, it reduces to prove that \eqref{eq aprCodi} is satisfied for every $n\in\mathbb{N}$.
This can be seen from the estimate that $\|\theta^{0,+}-\theta^{0,-}\|_{L^\infty_T H^m}\leq \|\theta^+_0-\theta^-_0\|_{H^m}\leq
2C_0\big(\|\theta^+_0\|_{H^m\cap L^p}+ \|\theta^-_0\|_{H^m\cap L^p} \big)$ and the induction method.

Next we show that $\{\theta^{n,\pm}\}_{n\in\mathbb{N}}$ are convergent in $C([0,T']; L^p)$ with some $T'\in ]0,T]$
fixed later. For $n,k \in \mathbb{N}$, $n>k$, denote
$\theta^{n,k,\pm}\triangleq \theta^{n+1,\pm}-\theta^{k+1,\pm}$, and the difference equations write
\begin{equation*}
\begin{cases}
  \partial_t\theta^{n,k,\pm} + \partial_1(u^{n+1,\pm}_1\,\theta^{n,k,\pm}) + \kappa |D|^\alpha \theta^{n,k,\pm}   =
  -\partial_1( u^{n,k,\pm}_1\, \theta^{k+1,\pm}) \\
  u^{n,k,\pm}_1\triangleq \pm \mathcal{R}_1\mathcal{R}_2^2|D|^{-1}(\theta^{n-1,k-1,+}-\theta^{n-1,k-1,-}), \\
  \theta^{n,k,\pm} |_{t=0} = 0.
\end{cases}
\end{equation*}
In a similar way as obtaining \eqref{eq UniEst1}, we get
\begin{equation*}
\begin{split}
  \|\theta^{n,k,\pm}(t)\|_{L^p}\leq &  C \int_0^t  \|(\theta^{n+1,+}-\theta^{n+1,-})(\tau)\|_{H^m}
  \|\theta^{n,k,\pm}(\tau)\|_{L^p}\mathrm{d}\tau \, + \\
  &  \,+ C\int_0^t \|\theta^{k+1,\pm}(\tau)\|_{H^m} \|(\theta^{n-1,k-1,+}-\theta^{n-1,k-1,-})(\tau)\|_{L^p} \mathrm{d}\tau.
\end{split}
\end{equation*}
Denoting $\Theta^{n,k}(t)\triangleq \|\theta^{n,k,+}(t)\|_{L^p} + \|\theta^{n,k,-}(t)\|_{L^p}$ for every $t\in [0,T]$,
we further have
\begin{equation*}
  \Theta^{n,k}(t)\leq \int_0^t h_n(\tau) \Theta^{n,k}(\tau)\mathrm{d}\tau
  + \int_0^t h_k(\tau) \Theta^{n-1,k-1}(\tau)\mathrm{d}\tau
\end{equation*}
where $h_i(\tau)=C\|\theta^{i+1,+}(\tau)\|_{H^m} + C\|\theta^{i+1,-}(\tau)\|_{H^m}$, $i=n,k$
satisfies the uniform estimate $\|h_i(\tau)\|_{L^\infty_T}\leq C M$ with $M$ an upper bound from \eqref{eq UnifBd}.
Hence, Gronwall's inequality leads to that for every $t\in [0,T]$
\begin{equation*}
\begin{split}
  \Theta^{n,k}(t) & \leq e^{\int_0^t h_n(\tau)\mathrm{d}\tau} \int_0^t h_k(\tau) \Theta^{n-1,k-1}(\tau)\mathrm{d}\tau \\
  & \leq e^{CMt }t C M \Theta^{n-1,k-1}(t).
\end{split}
\end{equation*}
By choosing $t$ small enough, i.e., for $t\in [0,T']$ (noting that $T'$ still only depends on $\|\theta^\pm_0\|_{H^m\cap L^p}$),
then there exists a constant $\mu<1$ such that
\begin{equation*}
  \Theta^{n,k}(t)\leq \mu \Theta^{n-1,k-1}(t), \quad \forall t\in[0,T'].
\end{equation*}
From iteration, we find that for every $n,k\in\mathbb{N}$, $n>k$,
\begin{equation*}
\begin{split}
  \Theta^{n,k}(t) & \leq \mu^{k+1} \big(\|\theta^{n-k,+}+\theta^{0,+}\|_{L^\infty_t L^p}
  +\|\theta^{n-k,-}+\theta^{0,-}\|_{L^\infty_t L^p}\big) \\
  & \leq CM \mu^{k+1}.
\end{split}
\end{equation*}
This ensures that $\{\theta^{n,\pm}\}_{n\in \mathbb{N}}$ are Cauchy sequences in $C([0,T']; L^p)$.
Therefore there exist $\theta^\pm\in C([0,T']; L^p)$ such that $\theta^{n,\pm}\rightarrow \theta^\pm$ strongly in
$C([0,T']; L^p)$.

Now we consider more properties of the limiting functions $\theta^\pm$. From \eqref{eq UnifBd} and
interpolation, we have that for every $\tilde{m}\in [0,m[$,
\begin{equation*}
\begin{split}
  \|\theta^{n,\pm}-\theta^\pm\|_{L^\infty_{T'} H^{\tilde{m}}(\mathbb{R}^2)} & \lesssim
  \|\theta^{n,\pm}-\theta^\pm\|_{L^\infty_{T'}L^p(\mathbb{R}^2)}^\gamma
  \|\theta^{n,\pm}-\theta^\pm\|_{L^\infty_{T'}H^m(\mathbb{R}^2)}^{1-\gamma} \\
  & \lesssim M^{1-\gamma} \|\theta^{n,\pm}-\theta^\pm\|_{L^\infty_{T'}L^p(\mathbb{R}^2)}^\gamma,
\end{split}
\end{equation*}
where $\gamma=\frac{m-{\tilde{m}}}{m+2/p-1}$.
Hence $\theta^{n,\pm}\rightarrow \theta^\pm$ strongly in $C([0,T']; H^{\tilde{m}})$ with ${\tilde{m}}\in [0,m[$.
By a classical argument, we know that $\theta^\pm$ solve the limiting equations \eqref{eq 2}, and if $m>3$,
they satisfy the equations in the classical sense. From Fatou's lemma, we get $\theta^\pm\in L^\infty([0,T']; H^m)$.

Similarly as proving the corresponding point in Theorem 1.1 of \cite{LiRZ}, we can also show that
$\theta^\pm\in C([0,T']; H^m)\cap C^1([0,T'[;H^{m_0})$ with $m_0=\min\{m-1,m-\alpha\}$.

\subsection{Blowup Criterion}
First we know that the system \eqref{eq 2} has a natural blowup criterion: if $T^*<\infty$, then necessarily
\begin{equation*}
  \|\theta^+\|_{L^\infty([0,T^*[; H^m\cap L^p )}+ \|\theta^-\|_{L^\infty([0,T^*[;H^m\cap L^p)} =\infty.
\end{equation*}
Otherwise the solution will go beyond the time $T^*$.

Next, from \eqref{eq Apri4} and the Calder\'on-Zygmund theorem, we find
\begin{equation*}
  \|\theta^\pm(t)\|_{H^m\cap L^p} \leq C_0 \|\theta^\pm_0\|_{H^m\cap L^p}
  +  C\int_0^t \Big(\|\mathcal{R}_1\mathcal{R}_2^2|D|^{-1}\nabla \theta\|_{L^\infty}\|\theta^\pm\|_{H^m\cap L^p}
  + \|\theta^\pm\|_{L^\infty} \|\theta\|_{H^m}\Big)(\tau) \mathrm{d}\tau,
\end{equation*}
Denote $G(t)=\|\theta^+(t)\|_{H^m\cap L^p}+\|\theta^-(t)\|_{H^m\cap L^p}$ for every $t\in [0,T^*[$,
then from Lemma \ref{lem LogInq} and $\theta=\theta^+-\theta^-$, we get
\begin{equation*}
\begin{split}
  G(t)& \leq C_0 G(0) + C \int_0^t \Big(\|\mathcal{R}_1\mathcal{R}_2^2|D|^{-1}\nabla\theta(\tau)\|_{L^\infty}
  +\|\theta^+(\tau)\|_{L^\infty}+\|\theta^-(\tau)\|_{L^\infty}\Big)\, G(\tau)\mathrm{d}\tau \\
  & \leq C_0G(0)+ C\int_0^t \Big(1+\|\theta^+(\tau)\|_{L^\infty}+\|\theta^-(\tau)\|_{L^\infty}\Big)
  \log\big(e+G(\tau)\big)G(\tau)\mathrm{d}\tau.
\end{split}
\end{equation*}
Direct computation yields that for every $t\in[0,T^*[$,
\begin{equation*}
  G(t)\leq (C_0 G(0)+e)^{\exp\big\{Ct+ C\int_0^t(\|\theta^+(\tau)\|_{L^\infty}+\|\theta^-(\tau)\|_{L^\infty})\mathrm{d}\tau \big\}}.
\end{equation*}
Therefore, if $T^*<\infty$, we necessarily need that $\int_0^{T^*}(\|\theta^+(t)\|_{L^\infty}+\|\theta^-(t)\|_{L^\infty})\mathrm{d}t=\infty$.

%%%%%%%%%%%%%%%%%%%%%%%%%%%%%%%%%%%%%%%%%%%%%%%%%%%%%%%%%%%%%%%%%%%%%%%%%%%%%%%%%%%%%%%%%%%%%%%%%%%%%%%%%%%%%%%%%%%
\section{Proof of Proposition \ref{prop FurP}}
\setcounter{section}{4}\setcounter{equation}{0}

Throughout this section, we assume that $(\theta^+,\theta^-)\in C([0,T^*[; H^m\cap L^p)\cap C^1([0,T^*[;H^{m_0})$
with $m>4$, $p\in]1,2[$, $m_0=\min\{m-1,m-\alpha\}$ is the corresponding maximal lifespan solution obtained in Theorem
\ref{thm local}.

\subsection{Proof of Proposition \ref{prop FurP}-(1): the non-negativity of the solutions.}
For every $T\in ]0,T^*[$, denote $U_T=]0,T]\times \mathbb{R}^2$.
According to the Sobolev embedding, we infer that
\begin{equation*}
  \theta^\pm \in C^{1,0}_{t,x}(U_T)\cap C^{0,2}_{t,x}(U_T)\cap
  C^0_{t,x}(\overline{U}_T) \cap L^2(U_T)
\end{equation*}
satisfies
\begin{equation*}
  \sup_{U_T} \big(|\partial_t \theta^\pm| + |\nabla \theta^\pm| + |\nabla^2\theta^\pm|\big) + \sup_{\overline{U}_T} |\theta^\pm|
  \lesssim_{\|\theta^\pm\|_{L^\infty_T(H^m\cap L^p)}} 1,
\end{equation*}
and $\theta^\pm$ solve the following equations pointwise
\begin{equation*}
\begin{cases}
  \partial_t \theta^\pm + \nabla\cdot(u^\pm \, \theta^\pm) = \kappa |D|^\alpha \theta^\pm, \\
  u^\pm = \pm \big(\mathcal{R}_1 \mathcal{R}_2^2 |D|^{-1}(\theta^+-\theta^-),0 \big), \\
  \theta^\pm(0,x) =\theta^\pm_0(x).
\end{cases}
\end{equation*}
To show that $u^\pm\in C^{0,1}_{t,x}(U_T)$, noticing $\nabla u^\pm= \nabla(\mathcal{R}_1\mathcal{R}_2^2 |D|^{-1}(\theta^+-\theta^-),0)$,
it suffices to prove that $\theta^\pm \in C(]0,T]; B^0_{\infty,1})$, and this turns out to be a consequence of $\partial_t\theta^\pm\in C([0,T]; H^{m_0})$
with $m_0=\min\{m-1,m-\alpha\}$ and Sobolev's embedding. It is also clear to see that $\theta^\pm_0\in C^2(\mathbb{R}^2)$ and
\begin{equation*}
  \sup_{U_T} |\mathrm{div} u^\pm| = \sup_{U_T} |\mathcal{R}_1^2\mathcal{R}_2^2 (\theta^+-\theta^-)|
  \lesssim \|\theta^+\|_{L^\infty_T H^m}+ \|\theta^-\|_{L^\infty_T H^m}.
\end{equation*}
Hence by virtue of Lemma \ref{lem Pos}, and from $\theta^\pm_0\geq 0$, we have $\theta^\pm \geq 0$ in $U_T$.
Since $T\in ]0,T^*[$ is arbitrary, this implies $\theta^\pm\geq 0$ for all $[0,T^*[\times \mathbb{R}^2$.

\subsection{Proof of Proposition \ref{prop FurP}-(2).}\label{sec FurP2}
Let $T\in]0,T^*[$ be arbitrary, $\phi \in C^\infty(\mathbb{R})$ be an even cut-off function satisfying that
\begin{equation*}
  0\leq \phi \leq 1,\quad \mathrm{supp}\,\phi \subset ]-2,2[,\quad \phi\equiv 1\; \mathrm{on}\; [-1,1].
\end{equation*}
Denote $\phi_R(\cdot)=\phi(\frac{\cdot}{R})$ for $R>0$.
Multiplying both sides of the equations of $\theta^\pm$ by $\phi_R(x_1)$ and integrating over the $x_1$-variable, we get
\begin{equation*}
\begin{aligned}
  \frac{d}{dt}\int_{\mathbb{R}} \theta^\pm(t,x)\phi_R(x_1)\mathrm{d}x_1
  & =- \int_{\mathbb{R}} \partial_1(u^\pm_1\, \theta^\pm)(t,x)\phi_R(x_1)\mathrm{d} x_1
  -\kappa \int_{\mathbb{R}}|D|^\alpha\theta^\pm (t,x)\phi_R(x_1)\mathrm{d}x_1 \\
  & \triangleq \mathrm{I}^\pm(t,x_2) + \mathrm{II}^\pm(t,x_2),
\end{aligned}
\end{equation*}
with $u^\pm_1\triangleq \pm \mathcal{R}_1\mathcal{R}_2^2 |D|^{-1}(\theta^+-\theta^-)$.
For $\mathrm{I}^\pm$, from the integration by parts and H\"older's inequality, we obtain that for every
$(t,x_2)\in[0,T]\times \mathbb{R}$
\begin{equation*}
\begin{aligned}
  \mathrm{I}^\pm(t,x_2) &= \frac{1}{R}\int_{\mathbb{R}}u^\pm_1(t,x)\theta^\pm(t,x) (\partial_1\phi)(x_1/R)\mathrm{d}x_1 \\
  & \leq \frac{1}{R^{1/2}}\|u^\pm_1\|_{L^\infty_T L^\infty_x} \|\theta^\pm\|_{L^\infty_T L^{\infty,2}_{x_2,x_1}}
  \|\nabla\phi\|_{L^2}.
\end{aligned}
\end{equation*}
From \eqref{eq uLinf1}, we see
\begin{equation}\label{eq uLinf}
  \|u^\pm_1\|_{L^\infty_T L^\infty_x} \lesssim \|\theta^+\|_{L^\infty_T (H^m\cap L^p)} + \|\theta^-\|_{L^\infty_T (H^m\cap L^p)}.
\end{equation}
By the Sobolev embedding, we also find that
\begin{equation*}
  \|\theta^\pm\|_{L^\infty_T L^{\infty,2}_{x_2,x_1}}\lesssim \|(\mathrm{Id}+|D_2|)\theta^\pm\|_{L^\infty_T L^2_x}
  \lesssim \|\theta^\pm\|_{L^\infty_T H^m}.
\end{equation*}
Thus
\begin{equation}
  \|\mathrm{I}^\pm\|_{L^\infty_T L^\infty_{x_2}}  \lesssim \frac{1}{R^{1/2}}  (\|\theta^+\|^2_{L^\infty_T (H^m\cap L^p)}
  + \|\theta^-\|^2_{L^\infty_T (H^m\cap L^p)})\|\nabla\phi\|_{L^2}.
\end{equation}
We can rewrite $\mathrm{II}^\pm$ as follows
\begin{equation*}
\begin{split}
  \mathrm{II}^\pm(t,x_2)  & = -\kappa \int_{\mathbb R} |D_2|^\alpha \theta^\pm(t,x) \phi_R(x_1)\mathrm{d}x_1
  -\kappa\int_{\mathbb R} (|D|^\alpha -|D_2|^\alpha)\theta^\pm(t,x) \phi_R(x_1)\mathrm{d}x_1  \\
  & \triangleq \mathrm{II}^\pm_1(t,x_2)  + \mathrm{II}_2^\pm(t,x_2).
\end{split}
\end{equation*}
It is obvious to see
\begin{equation*}
  \mathrm{II}^\pm_1(t,x_2)=-\kappa |D_2|^\alpha \Big(\int_{\mathbb R} \theta^\pm(t,x) \phi_R(x_1)\mathrm{d}x_1\Big).
\end{equation*}
For $\mathrm{II}^\pm_2$, observe that
\begin{equation*}
\begin{split}
  \mathrm{II}^\pm_2(t,x_2) & =-\kappa\int_{\mathbb{R}}\Big(\frac{|D|^\alpha-|D_2|^\alpha}{|D_1|^\alpha}\theta^\pm\Big)(t,x)\,
  |D_1|^\alpha(\phi_R)(x_1)\mathrm{d}x_1 \\
  & =-\kappa R^{-\alpha}\int_{\mathbb{R}}\Big(\frac{|D|^\alpha-|D_2|^\alpha}{|D_1|^\alpha} \theta^\pm\Big)(t,x)\,
  \big(|D_1|^\alpha\phi\big)\big(\frac{x_1}{R}\big)\mathrm{d}x_1.
\end{split}
\end{equation*}
If $\alpha\in]1/2,2]$, from H\"older's inequality and the fact that $|\zeta|^\alpha-|\zeta_2|^\alpha\leq |\zeta_1|^\alpha$
for all $\alpha\in]0,2]$, $\zeta=(\zeta_1,\zeta_2)\in\mathbb{R}^2$, we obtain
\begin{equation*}
\begin{split}
  \|\mathrm{II}^\pm_2\|_{L^\infty_T L^\infty_{x_2}} & \leq \kappa R^{-\alpha} \Big\|\frac{|D|^\alpha-|D_2|^\alpha}{|D_1|^\alpha}
  \theta^\pm\Big\|_{L^\infty_T L^{\infty,2}_{\tilde{x}_2,x_1}}
  \Big\|\big(|D_1|^\alpha\phi\big)\big(\frac{x_1}{R}\big)\Big\|_{L^2_{x_1}} \\
  & \lesssim \kappa R^{-(\alpha-\frac{1}{2})} \Big\|(\mathrm{Id}+|D_2|)\frac{|D|^\alpha-|D_2|^\alpha}{|D_1|^\alpha}
  \theta^\pm \Big\|_{L^\infty_T L^2_x} \||D|^\alpha\phi\|_{L^2} \\
  & \lesssim \kappa R^{-(\alpha-\frac{1}{2})} \|\theta^\pm\|_{L^\infty_T H^m}\||D|^\alpha\phi\|_{L^2},
\end{split}
\end{equation*}
where we also have used the estimate that $\|f\|_{L^{\infty,2}_{x_2,x_1}}\leq \|f\|_{L^{2,\infty}_{x_1,x_2}} \lesssim \|(\mathrm{Id}+|D_2|)f\|_{L^2_x}$.
Since
\begin{equation*}
  \frac{d}{dt}\int_{\mathbb{R}} \theta^\pm(t,x)\phi_R(x_1)\mathrm{d}x_1
  + \kappa |D_2|^\alpha\Big( \int_{\mathbb{R}} \theta^\pm(t,x)\phi_R(x_1)\mathrm{d}x_1 \Big)
  = \mathrm{I}^\pm(t,x_2) + \mathrm{II}^\pm_2(t,x_2),
\end{equation*}
we get
\begin{equation*}
\begin{split}
  \Big\|\int_{|x_1|\leq R}\theta^\pm(t,x)\mathrm{d}x_1 \Big\|_{L^\infty_T L^\infty_{x_2}}
  & \leq \Big\|\int_{\mathbb{R}}\theta^\pm(t,x)\phi_R(x_1)\mathrm{d}x_1 \Big\|_{L^\infty_T L^\infty_{x_2}} \\
  & \leq \Big\|\int_{\mathbb{R}}\theta^\pm_0(x)\phi_R(x_1)\mathrm{d}x_1 \Big\|_{L^\infty_{x_2}}
  + T \big(\|\mathrm{I}^\pm\|_{L^\infty_T L^\infty_{x_2}} +\|\mathrm{II}_2^\pm\|_{L^\infty_T L^\infty_{x_2}}\big) \\
  & \leq \|\theta^\pm_0\|_{L^{\infty,1}_{x_2,x_1}} + C T \big(R^{-\frac{1}{2}}+ R^{-(\alpha-\frac{1}{2})}\big),
\end{split}
\end{equation*}
where $C$ is an absolute constant depending on $\kappa$, $\|\theta^\pm\|_{L^\infty_T (H^m\cap L^p)}$ and $\phi$.
From $\theta^\pm(t)\geq 0$ for all $t\in[0,T]$ and the monotone convergence theorem, and by passing $R$ to infinity, we find
\begin{equation*}
  \|\theta^\pm\|_{L^\infty_T L^{\infty,1}_{x_2,x_1}}\leq \|\theta^\pm_0\|_{L^{\infty,1}_{x_2,x_1}} .
\end{equation*}
Hence this estimate combined with the fact that $T\in ]0,T^*[$ is arbitrary leads to \eqref{eq MP}.

Now, since $\theta^\pm\in C([0,T^*[;H^m\cap L^p)$ with $m>4$ and $p\in]1,2[$, we have
\begin{equation}\label{eq FACT}
  \lim_{x_1\rightarrow -\infty}\Big(\theta^\pm(t,x)+ \sum_{k=1,2,3} |\nabla^k \theta^\pm(t,x)| \Big)=0,
  \qquad \forall (t,x_2)\in[0,T^*[ \times\mathbb{R},
\end{equation}
thus we moreover deduce that for every $t\in[0,T^*[$
\begin{equation}\label{eq rhoLinf}
  \|\rho^\pm(t,x)\|_{L^\infty_x} \leq \Big\|\int_{-\infty}^{x_1}\theta^\pm(t,\tilde{x}_1,x_2)
  \mathrm{d}\tilde{x}_1\Big\|_{L^\infty_{x}}
  \leq \Big\|\int_\mathbb{R} \theta^\pm(t,x)\mathrm{d}x_1\Big\|_{L^\infty_{x_2}}\leq \|\theta^\pm_0\|_{L^{\infty,1}_{x_2,x_1}},
\end{equation}
and
\begin{equation}\label{eq FACT1}
  \lim_{x_1\rightarrow -\infty} \rho^\pm(t,x)=\lim_{x_1\rightarrow -\infty} \int_{-\infty}^{x_1}\theta^\pm(t,\tilde{x}_1,x_2)
  \mathrm{d}\tilde{x}_1=0, \quad \forall (t,x_2)\in[0,T^*[\times\mathbb{R}.
\end{equation}
Next we shall justify that $\rho^\pm$ are the mild solutions of the system \eqref{eq 1} for $(t,x)\in [0,T^*[\times \mathbb{R}^2$.
From Theorem \ref{thm local}, we know that
\begin{equation*}
  \theta^\pm(t,x)=e^{-\kappa t|D|^\alpha}\theta^\pm_0(x) - \int_0^t e^{-\kappa(t-\tau)|D|^\alpha}
  \partial_1(u^\pm \,\theta^\pm)(\tau,x)\mathrm{d}\tau, \quad \forall (t,x)\in [0,T^*[\times \mathbb{R}^2,
\end{equation*}
with
\begin{equation*}
  u^\pm_1 =\pm \mathcal{R}_1\mathcal{R}_2^2|D|^{-1}(\theta^+-\theta^-)= \pm \mathcal{R}_1^2\mathcal{R}_2^2(\rho^+-\rho^-).
\end{equation*}
Taking advantage of the relation $\theta^\pm=\partial_1\rho^\pm$, we get
\begin{equation*}
\begin{split}
  \rho^\pm(t,x) =& \int_{-\infty}^{x_1}\theta^\pm(t,\tilde{x}_1,x_2)\mathrm{d}\tilde{x}_1 \\
  =& \int_{-\infty}^{x_1} e^{-\kappa t|D|^\alpha}\partial_1\rho^\pm_0(\tilde{x}_1,x_2) \mathrm{d}\tilde{x}_1
  - \int_{-\infty}^{x_1} \int_0^t e^{-\kappa(t-\tau)|D|^\alpha}\partial_1(u_1^\pm \partial_1\rho^\pm)(\tau,\tilde{x}_1,x_2)
  \mathrm{d}\tau\mathrm{d}\tilde{x}_1 \\
  = & \, e^{-\kappa t|D|^\alpha}\rho^\pm_0(x) -\int_0^t e^{-\kappa(t-\tau)|D|^\alpha}(u_1^\pm\, \partial_1\rho^\pm)(\tau,x)
  \mathrm{d}\tau+ E(t,x_2),
\end{split}
\end{equation*}
with
\begin{equation*}
\begin{split}
  E(t,x_2)& = E_1(t,x_2)+ E_2(t,x_2) \\
  & \triangleq   - \lim_{\tilde{x}_1\rightarrow -\infty}e^{-\kappa t|D|^\alpha}\rho^\pm_0(\tilde{x}_1,x_2)
  + \lim_{\tilde{x}_1\rightarrow -\infty} \int_0^t e^{-\kappa(t-\tau)|D|^\alpha}
  (u_1^\pm\,\theta^\pm)(\tau,\tilde{x}_1,x_2)\mathrm{d}\tau.
\end{split}
\end{equation*}
When $\kappa=0$, by virtue of \eqref{eq FACT1}, \eqref{eq uLinf} and \eqref{eq FACT}, it just reduces to
\begin{equation*}
  \rho^{\pm}(t,x)= \rho^\pm_0(x) - \int_0^t (u_1^\pm\, \partial_1\rho^\pm)(\tau,x) \mathrm{d}\tau.
\end{equation*}
%Note that at this case, $\rho^\pm$ also solve the system \eqref{eq 1} in the pointwise sense
%\begin{equation*}
%  \partial_t \rho^\pm + u_1^\pm \,\partial_1\rho^\pm =0, \qquad  \rho^\pm|_{t=0}=\rho^\pm_0.
%\end{equation*}
When $\kappa>0$, noticing that
\begin{equation}\label{eq Exp2}
  e^{-\kappa t|D|^\alpha}\rho^\pm_0(x)=\int_{\mathbb{R}^2} K_\alpha(\kappa t,y)\rho^\pm_0(x-y)\mathrm{d}y, \quad \alpha\in]0,2],
\end{equation}
where $K_\alpha(\kappa t,y)=(\kappa t)^{-2/\alpha}K_\alpha(y/(\kappa t)^{1/\alpha})$ and
$K_\alpha(y)=\mathcal{F}^{-1}(e^{-|\zeta|^\alpha})(y)$ ($\alpha\in ]0,2]$) satisfies
\begin{equation}\label{eq KerPro}
  K_\alpha\geq 0,\quad
  \begin{cases}
   K_\alpha(y)\approx \frac{1}{(1+|y|^2)^{(2+\alpha)/2}}, & \quad  y\in\mathbb{R}^2,\, \alpha\in]0,2[, \\
  K_2(y) = \frac{1}{4}e^{-|y|^2/4}, & \quad y\in \mathbb{R}^2,\, \alpha=2,
  \end{cases}
\end{equation}
thus from \eqref{eq rhoLinf}, \eqref{eq FACT1} and the dominated convergence theorem, we find $E_1(t,x_2)=0$.
Similarly, from \eqref{eq uLinf} and \eqref{eq FACT}, we also get $E_2(t,x_2)=0$. Hence we have for every
$(t,x)\in[0,T^*[\times\mathbb{R}^2$,
\begin{equation}\label{eq bbb}
  \rho^\pm(t,x)= e^{-\kappa t|D|^\alpha}\rho^\pm_0(x) -\int_0^t e^{-\kappa(t-\tau) |D|^\alpha}
  (u_1^\pm\, \partial_1\rho^\pm)(\tau,x) \mathrm{d}\tau.
\end{equation}

\subsection{Proof of Proposition \ref{prop FurP}-(3).}
We first show that for $k=1,2,3$, $\nabla^k\rho^\pm(t)\in L^\infty_x$ for all $t\in[0,T^*[$ under some appropriate assumptions of $\rho^\pm_0$.
Clearly, since $\nabla^{k-1}\partial_1\rho^\pm(t)=\nabla^{k-1}\theta^\pm(t)\in L^\infty_x$ for all $t\in [0,T^*[$,
it suffices to consider the case of $\partial^k_2\rho^\pm$. Due to that $\theta^\pm\in C([0,T^*[;H^m\cap L^p)$
with $m>4$ and $p\in ]1,2[$, the nonlinear term satisfies that for every $T\in]0,T^*[$,
\begin{equation}\label{eq FACT2}
\begin{split}
  \|\partial_2^k(u^\pm_1\,\partial_1\rho^\pm)\|_{L^\infty_T L^\infty_x} & \leq
  \sum_{0\leq j\leq k} \|\partial_2^j u^\pm_1 \, \partial_2^{k-j}\theta^\pm\|_{L^\infty_T L^\infty_x} \\
  & \leq \sum_{0\leq j\leq k} \|\partial_2^j u^\pm_1\|_{L^\infty_T L^\infty_x}\|\partial_2^{k-j}\theta^\pm\|_{L^\infty_T L^\infty_x}\\
  & \lesssim (\|\theta^+\|_{L^\infty_T (H^m\cap L^p)}+\|\theta^-\|_{L^\infty_T (H^m\cap L^p)})\|\theta^\pm\|_{L^\infty_T H^m}.
\end{split}
\end{equation}
Thus, from \eqref{eq bbb} and $\partial_2^k\rho^\pm_0\in L^\infty_x$, we have
\begin{equation}\label{eq par^kRho}
  \partial_2^k\rho^\pm(t,x)= e^{-\kappa t|D|^\alpha}\partial_2^k\rho^\pm_0(x)
  -\int_0^t e^{-\kappa(t-\tau)|D|^\alpha}\partial_2^k(u_1^\pm\, \partial_1\rho^\pm)(\tau,x) \mathrm{d}\tau,
\end{equation}
and
\begin{equation*}
  \|\partial_2^k\rho^\pm\|_{L^\infty_T L^\infty_x}\leq \|\partial_2^k \rho^\pm_0\|_{L^\infty_x}+ C T,
\end{equation*}
with $C$ depending on $\|\theta^\pm\|_{L^\infty_T (H^m\cap L^p)}$, which implies that
$\partial_2^k\rho^\pm(t)\in L^\infty_x$ for all $t\in[0,T^*[$. Moreover, thanks to $\lim_{x_1\rightarrow -\infty}\partial_2^k\rho^\pm_0(x)=0$
for every $x_2\in\mathbb{R}$, \eqref{eq FACT} and the dominated convergence theorem, we also have
\begin{equation}\label{eq lim1}
  \lim_{x_1\rightarrow -\infty}\partial_2^k\rho^\pm(t,x)=0,\qquad \forall (t,x_2)\in [0,T^*[\times \mathbb{R}.
\end{equation}

Next we show that $\rho^\pm$ solve the system \eqref{eq 1} in the classical pointwise sense.
Since $\theta^\pm$ are the classical solutions to the system \eqref{eq 2} and $\partial_1\rho^\pm =\theta^\pm$, we have
that for every $(t,x)\in]0,T^*[\times\mathbb{R}^2$,
\begin{equation*}
\begin{aligned}
  \partial_t \rho^\pm (t,x) & = \int_{-\infty}^{x_1}\partial_t \theta^\pm(t,\tilde{x}_1,x_2)\mathrm{d}\tilde{x}_1 \\
  & =-\int_{-\infty}^{x_1}\partial_1(u^\pm_1\, \theta^\pm )(t,\tilde{x}_1,x_2)\mathrm{d}\tilde{x}_1
  - \kappa \int_{-\infty}^{x_1}|D|^\alpha\theta^\pm(t,\tilde{x}_1,x_2)\mathrm{d}\tilde{x}_1 \\
  & =-\int_{-\infty}^{x_1}\partial_1(u^\pm_1\, \partial_1\rho^\pm )(t,\tilde{x}_1,x_2)\mathrm{d}\tilde{x}_1
  - \kappa \int_{-\infty}^{x_1}\partial_1|D|^\alpha \rho^\pm(t,\tilde{x}_1,x_2)\mathrm{d}\tilde{x}_1 \\
  & =-u^\pm_1\,\partial_1\rho^\pm(t,x) - \kappa |D|^\alpha \rho^\pm(t,x) + \widetilde{E}^\alpha(t,x_2),
\end{aligned}
\end{equation*}
where
\begin{equation*}
  \widetilde{E}^\alpha(t,x_2)=\lim_{x_1\rightarrow -\infty} |D|^\alpha \rho^\pm(t,x),
\end{equation*}
and in the last line we have used \eqref{eq uLinf} and \eqref{eq FACT}. When $\alpha=2$, from \eqref{eq FACT} and \eqref{eq lim1},
we directly get $\widetilde{E}^2(t,x)=0$. When $\alpha\in ]0,2[$, due to $\rho^\pm\in L^\infty([0,T^*[; C^2_b(\mathbb{R}^2))$,
from Lemma \ref{lem DalpExp} we have
\begin{equation*}
\begin{split}
  |D|^\alpha\rho^\pm(t,x)  = & -c_\alpha \bigg( \int_{B_1}\frac{\rho^\pm(t,x+y)-\rho^\pm(t,x)
  -\nabla\rho^\pm(t,x)\cdot y }{|y|^{2+\alpha}} \mathrm{d}y +
  \int_{B_1^c}\frac{\rho^\pm(t,x+y)-\rho^\pm(t,x)}{|y|^{2+\alpha}} \mathrm{d}y \bigg) \\
  = &-c_\alpha\bigg( \int_{B_1}\int_0^1\int_0^1\frac{\big(y\cdot \nabla^2\rho^\pm(t,x+ s\tau y)\big)\cdot y }
  {|y|^{2+\alpha}}\tau\mathrm{d}s\mathrm{d}\tau\mathrm{d}y
  + \int_{B_1^c}\frac{\rho^\pm(t,x+y)-\rho^\pm(t,x)}{|y|^{2+\alpha}} \mathrm{d}y \bigg),
\end{split}
\end{equation*}
and by the dominated convergence theorem, \eqref{eq FACT} and \eqref{eq lim1}, we find $\widetilde{E}^\alpha(t,x_2)=0$.

Similarly, we can prove that $\nabla\rho^\pm$ solve the equations in the classical pointwise sense
\begin{equation*}
  \partial_t (\nabla\rho^\pm) + u^\pm_1\, \partial_1 (\nabla \rho^\pm)+ \kappa |D|^\alpha(\nabla\rho^\pm)
  = -\nabla u^\pm_1\,\partial_1 \rho^\pm,\quad \nabla\rho^\pm|_{t=0}=\nabla\rho^\pm_0,
\end{equation*}
and $\partial_t(\nabla\rho^\pm)\in L^\infty([0,T^*[; L^\infty)$ which implies that
$\nabla\rho^\pm\in C([0,T^*[; L^\infty)$.

\subsection{Proof of Proposition \ref{prop FurP}-(4).}
Since $\theta^\pm\in C([0,T^*[;H^m(\mathbb{R}^2) )$ with $m>4$, then for every $t\in[0,T^*[$, there exists a constant $R_1>0$ (that may depend on $t$)
such that
\begin{equation*}
  \|\partial_1\rho^\pm\|_{L^\infty([0,t];L^\infty(B_{R_1}^c))}=\|\theta^\pm\|_{L^\infty([0,t];L^\infty(B_{R_1}^c)}
  \leq \|\partial_1\rho^\pm_0\|_{L^\infty}.
\end{equation*}

For $\partial_2\rho^\pm$, from \eqref{eq par^kRho}, and by denoting $f^\pm(t,x)=\partial_2(u^\pm_1 \,\partial_1\rho^\pm)(t,x)$, we infer that for every $t\in  ]0,T^*[$
and for some constant $R_2>0$ chosen later,
\begin{equation*}
  \|\partial_2\rho^\pm\|_{L^\infty([0,t]; L^\infty(B^c_{R_2}))}\leq \|\partial_2\rho^\pm_0\|_{L^\infty}
  +\int_0^t  \big\| e^{-\kappa(t-\tau)|D|^\alpha}f^\pm(\tau,\cdot)\big\|_{L^\infty(B^c_{R_2})}\mathrm{d}\tau .
\end{equation*}
Let $\chi$ be the cut-off function in the subsection \ref{subsec Pre1},
%satisfying that
and denote $\psi(x)\triangleq 1-\chi(x)$ for every $x\in\mathbb{R}^2$. Clearly, $\psi(x)\in C^\infty(\mathbb{R}^2)$ satisfies that
\begin{equation*}
  0\leq \psi\leq 1, \quad \mathrm{supp}\, \psi\subset B_1^c, \quad \psi\equiv 1 \,\;\mathrm{on}\,\; \overline{B}_{\frac{4}{3}}^c,
\end{equation*}
thus we get
\begin{equation*}
  \int_0^t \big\| e^{-\kappa(t-\tau)|D|^\alpha}f^\pm(\tau,\cdot)\big\|_{L^\infty(B^c_{R_2})} \mathrm{d}\tau \leq
  \int_0^t \big\| e^{-\kappa(t-\tau)|D|^\alpha}f^\pm(\tau,x)\, \psi\big(\frac{2x}{R_2}\big) \big\|_{L^\infty_x}\mathrm{d}\tau
  \triangleq \Gamma^\pm(t).
\end{equation*}
We divide it into several cases
\begin{equation*}
\begin{split}
  \Gamma^\pm(t)\leq & \int_0^t \Big\| e^{-\kappa(t-\tau)|D|^\alpha}\Big(f^\pm(\tau,\cdot)
  \psi(\frac{\cdot}{R_2/2})\Big)(x)\Big\|_{L^\infty_x} \mathrm{d}\tau+ \\
  & + \int_0^t \Big\|\Big( \big[e^{-\kappa(t-\tau)|D|^\alpha},\psi\big(\frac{\cdot}{R_2/2})\big]
  f^\pm (\tau,\cdot)\Big)(x)\Big\|_{L^\infty_x} \mathrm{d}\tau \\
  \triangleq & \, \Gamma^\pm_1(t) + \Gamma^\pm_2(t),
\end{split}
\end{equation*}
where $[X,Y]\triangleq XY-YX$ is the commutator.
For $\Gamma^\pm_1$, noticing that as $r\rightarrow \infty$,
\begin{equation*}
\begin{split}
  \|f^\pm\|_{L^\infty_t L^\infty(B^c_r)} \lesssim \|(\theta^+-\theta^-)\|_{L^\infty_t (H^m\cap L^p)}
  \big(\|\theta^\pm\|_{L^\infty_t (L^\infty(B^c_r))}+\|\partial_2\theta^\pm\|_{L^\infty_t (L^\infty(B^c_r))}\big)\longrightarrow 0,
\end{split}
\end{equation*}
we can choose $R_2$ large enough so that for every $t\in ]0,T^*[$,
\begin{equation*}
  \Gamma^\pm_1(t)\leq C t \|f^\pm \psi(2x/R_2)\|_{L^\infty_t L^\infty_x}
  \leq C t \|f^\pm\|_{L^\infty_t (L^\infty(B^c_{R_2/2}))}\leq \frac{\epsilon}{2}.
\end{equation*}
From \eqref{eq Exp2}, we can rewrite $\Gamma^\pm_2$ as
\begin{equation*}
\begin{split}
  \Gamma^\pm_2(t) & =\int_0^t \Big\|\int_{\mathbb{R}^2}K_\alpha( \kappa(t-\tau),y)f^\pm(\tau,x-y)
  \Big(\psi\big(\frac{x-y}{R_2/2}\big)-\psi\big(\frac{x}{R_2/2}\big)\Big)\mathrm{d}y\Big\|_{L^\infty_x} \mathrm{d}\tau \\
  & =\int_0^t \Big\|\int_{\mathbb{R}^2}K_\alpha(\kappa(t-\tau),y)f^\pm(\tau,x-y)
  \Big(\chi\big(\frac{x-y}{R_2/2}\big)-\chi\big(\frac{x}{R_2/2}\big)\Big)\mathrm{d}y\Big\|_{L^\infty_x} \mathrm{d}\tau.
\end{split}
\end{equation*}
Thus by using the estimate that
$$|g(z_1)-g(z_2)|\leq \|g\|_{C^{\alpha/2}(\mathbb{R}^2)} |z_1-z_2|^{\alpha/2}, \quad \forall z_1,z_2\in \mathbb{R}^2,$$
and the Minkowiski inequality, \eqref{eq FACT2}, \eqref{eq KerPro}, we obtain that
\begin{equation*}
\begin{aligned}
  \Gamma^\pm_2(t) & \leq \|\chi\|_{C^{\alpha/2}} \int_0^t \Big\|\int_{\mathbb{R}^2} K_\alpha(\kappa(t-\tau),y)
  \, |f^\pm(\tau,x-y)|\,\Big(\frac{|y|}{R_2/2}\Big)^{\alpha/2} \mathrm{d}y \Big\|_{L^\infty_x}\mathrm{d}\tau \\
  & \lesssim \|\chi\|_{C^{\alpha/2}} \|f^\pm\|_{L^\infty_t L^\infty_x}
  \int_0^t \int_{\mathbb{R}^2} \big(\kappa(t-\tau)\big)^{-\frac{2}{\alpha}}
  K_\alpha\Big( \frac{y}{(\kappa(t-\tau))^{1/\alpha}}\Big) \frac{|y|^{\alpha/2}}{R_2^{\alpha/2}}\mathrm{d}y \mathrm{d}\tau \\
  & \lesssim  \frac{\|\chi\|_{C^{\alpha/2}}}{R_2^{\alpha/2}} \|\theta^\pm\|_{L^\infty_t (H^m\cap L^p)}^2
  \int_0^t \big(\kappa (t-\tau) \big)^{\frac{1}{2}}\mathrm{d}\tau
  \int_{\mathbb{R}^2} K_\alpha(y) |y|^{\frac{\alpha}{2}}\mathrm{d}y \\
  & \lesssim R_2^{-\alpha/2} \|\chi\|_{C^{\alpha/2}} \|\theta^\pm\|_{L^\infty_t (H^m\cap L^p)}^2 (\kappa t)^{3/2}.
\end{aligned}
\end{equation*}
Thus through choosing $R_2$ large enough, we also have
\begin{equation*}
  \Gamma^\pm_2(t)\leq \frac{\epsilon}{2},\qquad \forall t\in]0,T^*[.
\end{equation*}
Denote $R=\max\{R_1,R_2\}$, then gathering the above estimates leads to \eqref{eq NRDecay}.

%%%%%%%%%%%%%%%%%%%%%%%%%%%%%%%%%%%%%%%%%%%%%%%%%%%%%%%%%%%%%%%%%%%%%%%%%%%%%%%%%%%%%%%%%%%%%%%%%%%%%%%%%%%%%%%%%%%
\section{Proof of Theorem \ref{thm glob}}\label{sec glob}
\setcounter{section}{5}\setcounter{equation}{0}

From Theorem \ref{thm local} and Proposition \ref{prop FurP},
we assume that $T^*>0$ is the maximal existence time of the solutions
$(\theta^+,\theta^-)\in C([0,T^*[; H^m\cap L^p)\cap L^\infty([0,T^*[; L^{\infty,1}_{x_2,x_1})\cap C^1([0,T^*[; H^{m_0})$
and $(\rho^+,\rho^-)\in L^\infty([0,T^*[;W^{2,\infty})\cap C([0,T^*[; W^{1,\infty})$ with $m>4$, $p\in]1,2[$ and $m_0=\min\{m-1,m-\alpha\}$.
There is also a blowup criterion: if $T^*<\infty$, we necessarily have
\begin{equation}\label{eq BlowC}
  \int_0^{T^*}\|(\theta^+,\theta^-)(t)\|_{L^\infty}\mathrm{d}t = \int_0^{T^*}\|(\partial_1\rho^+,\partial_1\rho^-)(t)\|_{L^\infty}\mathrm{d}t =\infty.
\end{equation}
We shall apply the nonlocal maximum principle method to the system \eqref{eq 1} to
show that some appropriate modulus of continuity is preserved, which implies that the Lipschitz
norm of $(\rho^+(t),\rho^-(t))$ is bounded uniformly in time. Clearly, this combined with \eqref{eq BlowC} leads to $T^*=\infty$.

Let $\lambda\in ]0,\infty[$ be a real number chosen later, $\omega$ be a stationary modulus of continuity
with its explicit formula shown later. According to the scaling transformation of \eqref{eq 1}, we set
\begin{equation}
  \omega_\lambda(\xi)\triangleq \lambda^{\alpha-1} \omega(\lambda \xi),\qquad \forall\, \xi\in [0,\infty[.
\end{equation}

First, we show that $(\rho^+_0,\rho^-_0)$ strictly obeys the MOC $\omega_\lambda$ for some $\lambda$.
From \eqref{eq Exp} and the non-negativity of $\theta$, we know that
$\|\rho^\pm_0\|_{L^\infty}\leq  \|\theta^\pm_0\|_{L^{\infty,1}_{x_2,x_1}}$. Denote $\omega_\lambda^{-1}$ and $\omega^{-1}$
the inverse functions of $\omega_\lambda$ and $\omega$
(if they are multi-valued for some $z$, we choose the smallest ones as their values), then we need
$\omega^{-1}_\lambda(3\|(\theta^+_0,\theta^-_0)\|_{L^{\infty,1}_{x_2,x_1}})<\infty$, so that for every $x,y$ satisfying
$|x-y|\geq \omega^{-1}_\lambda(3\|(\theta^+_0,\theta^-_0)\|_{L^{\infty,1}_{x_2,x_1}})$, we have
\begin{equation*}
  |\rho^\pm_0(x)-\rho^\pm_0(y)|\leq 2\|\theta^\pm_0\|_{L^{\infty,1}_{x_2,x_1}}\leq \frac{2}{3}\omega_\lambda(|x-y|).
\end{equation*}
For $\alpha\in]1,2]$, with no loss of generality we suppose that there are fixed constants $c_0, \xi_0>0$
depending on $\alpha$ such that
$\omega(\xi_0)= c_0$, which yields $\omega^{-1}(c_0)\leq \xi_0$.
Then we can choose some $\lambda\in ]0,\infty[$ such that
\begin{equation}\label{eq lamdCod1}
  \lambda^{\alpha-1} >\frac{3\|(\theta^+_0,\theta^-_0)\|_{L^{\infty,1}_{x_2,x_1}}}{c_0},
\end{equation}
and from $\omega_\lambda^{-1}(z)=\frac{1}{\lambda}\omega^{-1}(\frac{z}{\lambda^{\alpha-1}})$, we get
\begin{equation}\label{eq aaa}
  \omega^{-1}_{\lambda}(3\|(\theta_0^+,\theta_0^-)\|_{L^{\infty,1}_{x_2,x_1}})\leq \frac{\xi_0}{\lambda} < \infty.
\end{equation}
For $\alpha=1$, we have to call for that
$\omega$ is unbounded near infinity, so that $\omega^{-1}_\lambda(3\|(\theta^+_0,\theta^-_0)\|_{L^{\infty,1}_{x_2,x_1}})
=\lambda^{-1}\omega^{-1}(3\|(\theta^+_0,\theta^-_0)\|_{L^{\infty,1}_{x_2,x_1}})$ is meaningful for the large data. Thus for every $x,y$ satisfying
$\lambda|x-y|\geq \widetilde{C}_0$ with
\begin{equation*}
  \widetilde{C}_0 =
  \begin{cases}
    \xi_0, \quad  & \textrm{for} \;\,\alpha\in]1,2], \\
    \omega^{-1}(3\|(\theta^+_0,\theta^-_0)\|_{L^{\infty,1}_{x_2,x_1}}), \quad &  \textrm{for}\;\, \alpha=1,
  \end{cases}
\end{equation*}
we obtain
\begin{equation}
  |\rho^\pm_0(x)-\rho^\pm_0(y)|\leq \frac{2}{3}\omega_\lambda(|x-y|).
\end{equation}
The other treatment we can rely on is the mean value theorem, from which we have
\begin{equation*}
  |\rho^\pm_0(x)-\rho^\pm_0(y)|\leq \|\nabla\rho^\pm_0\|_{L^\infty} |x-y|.
\end{equation*}
Let $0<\delta_0<\widetilde{C}_0$. Due to the concavity of $\omega$, we infer that for every $x,y$ such that $\lambda|x-y|\leq \delta_0$,
\begin{equation*}
  \frac{\lambda^{\alpha-1} \omega(\delta_0)}{\delta_0}\leq \frac{\omega_\lambda(|x-y|)}{\lambda |x-y|}.
\end{equation*}
Thus by choosing $\lambda$ such that
\begin{equation}\label{eq lamdCod2}
  \lambda^\alpha > \frac{\delta_0}{\omega(\delta_0)} \|(\nabla\rho^+_0,\nabla\rho^-_0)\|_{L^\infty},
\end{equation}
we get that for every $x,y$ satisfying $x\neq y$ and $\lambda|x-y|\leq \delta_0$,
\begin{equation}
  |\rho^\pm_0(x)-\rho^\pm_0(y)|<\omega_\lambda(|x-y|).
\end{equation}
Finally, we consider the case of $x,y$ satisfying $\delta_0\leq \lambda|x-y|\leq \widetilde{C}_0$. Observe that $|\rho^\pm_0(x)-\rho^\pm_0(y)|\leq
\frac{\widetilde{C}_0}{\lambda}\|(\nabla\rho^+_0,\nabla\rho^-_0)\|_{L^\infty}$ and $\lambda^{\alpha-1}\omega(\delta_0)\leq \omega_\lambda(|x-y|)$.
Thus by choosing $\lambda$ satisfying
\begin{equation}\label{eq lamdCod3}
  \lambda^\alpha > \frac{\widetilde{C}_0}{\omega(\delta_0)} \|(\nabla\rho_0^+, \nabla \rho^-)\|_{L^\infty},
\end{equation}
we obtain that for every $x,y$ satisfying $\delta_0\leq \lambda|x-y|\leq \widetilde{C}_0$,
\begin{equation}
  |\rho^\pm_0(x)-\rho^\pm_0(y)|<\omega_\lambda(|x-y|).
\end{equation}
Hence, to fit our purpose, we can choose
\begin{equation}\label{eq lambda}
 \lambda\triangleq
 \begin{cases}
   \max\Big\{\Big(\frac{4\|(\theta^+_0,\theta^-_0)\|_{L^{\infty,1}_{x_2,x_1}}}{c_0}\Big)^{\frac{1}{\alpha-1}},
   \frac{\xi_0}{\|(\theta^+_0,\theta^-_0)\|_{L^{\infty,1}_{x_2,x_1}}}
   \|(\nabla\rho^+_0,\nabla\rho^-_0)\|_{L^\infty} \Big\},\quad &\alpha\in]1,2], \\
   \frac{\omega^{-1}\big(3\|(\theta^+_0,\theta^-_0)\|_{L^{\infty,1}_{x_2,x_1}}\big)}{\|(\theta^+_0,\theta^-_0)\|_{L^{\infty,1}_{x_2,x_1}}}
   \|(\nabla\rho^+_0,\nabla\rho^-_0)\|_{L^\infty},\quad & \alpha=1,
 \end{cases}
\end{equation}
and $\delta_0\triangleq \omega^{-1}\big(2\|(\theta^+_0,\theta^-_0)\|_{L^{\infty,1}_{x_2,x_1}}/\lambda^{\alpha-1}\big)$.

Let $T_*>0$ be the first time that the strict MOC $\omega_\lambda$ is lost by $\rho^\pm(t)$, i.e.,
\begin{equation}\label{eq T_*}
  T_*\triangleq \sup \{T\in [0,T^*[;  |\rho^\pm(t,x)-\rho^\pm(t,y)|< \omega_\lambda(|x-y|),\,\forall t\in [0,T[,\,
  \forall x\neq y\in\mathbb{R}^2 \}.
\end{equation}
Then we have the following assertion. %(with its proof postponed in the end of this section).
\begin{lemma}\label{lem scenario}
  Let $T_* >0$ be defined by \eqref{eq T_*}. Assume that $\omega$ moreover satisfies that
\begin{equation}\label{eq OmeC}
  \omega(0)=0, \quad \omega'(0)<\infty, \quad \omega''(0+)=-\infty.
\end{equation}
Then only three cases can occur:
\begin{enumerate}[(i)]
  \item $\rho^-$ strictly obeys the MOC $\omega_\lambda$ and there exist two separate points $x^+,y^+\in\mathbb{R}^2$ such that
  \begin{equation}\label{eq scena1}
    \rho^+(T_*,x^+)-\rho^+(T_*,y^+)=\omega_\lambda(\xi^+),\quad \textrm{with}\quad \xi^+=|x^+-y^+|;
  \end{equation}
  \item $\rho^+$ strictly obeys the MOC $\omega_\lambda$ and there exist two separate points $x^-,y^-\in\mathbb{R}^2$ such that
  \begin{equation}\label{eq scena2}
    \rho^-(T_*,x^-)-\rho^-(T_*,y^-)=\omega_\lambda(\xi^-),\quad \textrm{with}\quad \xi^-=|x^--y^-|;
  \end{equation}
  \item there exist four points $x^\pm,y^\pm\in\mathbb{R}^2$, $x^\pm\neq y^\pm$ such that
  \begin{equation}\label{eq scena3}
    \rho^\pm(T_*,x^\pm)-\rho^\pm(T_*,y^\pm)=\omega_\lambda(\xi^\pm),\quad \textrm{with}\quad \xi^\pm=|x^\pm-y^\pm|.
  \end{equation}
\end{enumerate}
Note that all $\xi^+$ and $\xi^-$ satisfy that $\xi^\pm \leq
\omega_\lambda^{-1}(3\|(\theta^+_0,\theta^-_0)\|_{L^{\infty,1}_{x_2,x_1}})$.
\end{lemma}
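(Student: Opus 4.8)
The plan is to read off the three cases from two structural facts: at the time $T_*$ at least one of $\rho^+,\rho^-$ must fail to \emph{strictly} obey $\omega_\lambda$, and any such failure occurs through an equality attained at two genuinely separate points whose separation is bounded. First I would record that, by the definition of $T_*$ in \eqref{eq T_*} as a supremum, for every $t<T_*$ both $\rho^+(t)$ and $\rho^-(t)$ strictly obey $\omega_\lambda$; since $(\rho^+,\rho^-)\in C([0,T^*[;W^{1,\infty})$, the pointwise differences $\rho^\pm(t,x)-\rho^\pm(t,y)$ depend continuously on $t$, so passing to the limit $t\uparrow T_*$ yields the non-strict bound $\rho^\pm(T_*,x)-\rho^\pm(T_*,y)\le\omega_\lambda(|x-y|)$ for all $x,y$. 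The three enumerated cases (i)--(iii) are then simply the logical partition according to which of $\rho^+$, $\rho^-$, or both attain equality, so it suffices to prove the two structural facts together with the attainment statement.

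For the separation bound I would invoke Proposition~\ref{prop FurP}-(2): combining \eqref{eq MP} with \eqref{eq rhoLinf} gives $\|\rho^\pm(t)\|_{L^\infty}\le\|\theta^\pm_0\|_{L^{\infty,1}_{x_2,x_1}}$ uniformly in $t\in[0,T^*[$, whence $|\rho^\pm(t,x)-\rho^\pm(t,y)|\le 2\|(\theta^+_0,\theta^-_0)\|_{L^{\infty,1}_{x_2,x_1}}$. Thus if an equality $\rho^\pm(T_*,x^\pm)-\rho^\pm(T_*,y^\pm)=\omega_\lambda(\xi^\pm)$ holds, then $\omega_\lambda(\xi^\pm)<3\|(\theta^+_0,\theta^-_0)\|_{L^{\infty,1}_{x_2,x_1}}$, and since $\omega_\lambda$ is increasing with $\omega_\lambda^{-1}\big(3\|(\theta^+_0,\theta^-_0)\|_{L^{\infty,1}_{x_2,x_1}}\big)<\infty$ (guaranteed in the setup by \eqref{eq aaa} for $\alpha>1$ and by the unboundedness of $\omega$ for $\alpha=1$), I would conclude $\xi^\pm\le\omega_\lambda^{-1}\big(3\|(\theta^+_0,\theta^-_0)\|_{L^{\infty,1}_{x_2,x_1}}\big)$, exactly the bound claimed in the final line.

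To rule out a loss of strictness ``at zero separation'' I would apply Lemma~\ref{lem MudLem} to each $\rho^\pm(T_*)$. Since $(\rho^+,\rho^-)\in L^\infty([0,T^*[;W^{3,\infty})$ we have $\rho^\pm(T_*)\in C_b^2$, and a direct computation shows $\omega_\lambda$ inherits \eqref{eq OmeC}, namely $\omega_\lambda(0)=0$, $\omega_\lambda'(0)=\lambda^\alpha\omega'(0)<\infty$ and $\omega_\lambda''(0+)=\lambda^{\alpha+1}\omega''(0+)=-\infty$. Lemma~\ref{lem MudLem} then gives $\|\nabla\rho^\pm(T_*)\|_{L^\infty}<\omega_\lambda'(0)$, so that for $y=x+\xi\ell$ the mean value theorem and the concavity of $\omega_\lambda$ force the ratio $[\rho^\pm(T_*,x)-\rho^\pm(T_*,y)]/\omega_\lambda(|x-y|)$ to converge, as $|x-y|\to0$, to a quantity of modulus at most $\|\nabla\rho^\pm(T_*)\|_{L^\infty}/\omega_\lambda'(0)<1$. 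Hence strictness can never be lost in the limit of coinciding points, and any loss must occur at genuinely separated $x^\pm\neq y^\pm$, producing exactly the form \eqref{eq scena1}--\eqref{eq scena3}.

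The main obstacle is to prove that at $T_*$ at least one function \emph{actually attains} equality at finite points. I would argue by contradiction: if at $T_*$ both $\rho^\pm$ strictly obeyed $\omega_\lambda$, I would extend the strict bound slightly beyond $T_*$ using the $t$-continuity recorded above, contradicting the maximality in \eqref{eq T_*}. The delicate point, absent in the periodic setting of \cite{KisNV}, is that a sequence of near-extremal pairs $(x_n,y_n)$ could escape to spatial infinity, where $\rho^\pm$ does \emph{not} decay; one must prevent the supremum from being realized only in this limit. This is exactly where Proposition~\ref{prop FurP}-(4) enters: it bounds the Lipschitz norm of $\rho^\pm$ on $B_R^c$ by $\|\nabla\rho^\pm_0\|_{L^\infty}+\epsilon$, while the choice of $\lambda$ through \eqref{eq lamdCod2}--\eqref{eq lamdCod3} makes $\omega_\lambda(\xi)$ strictly dominate $(\|\nabla\rho^\pm_0\|_{L^\infty}+\epsilon)\,\xi$ on the admissible range of separations (bounded above by the second step and bounded below away from $0$ by the third). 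Consequently no near-maximal ratio can be supported near infinity, the extremizing configuration is confined to a compact set, and equality is attained there for at least one sign.
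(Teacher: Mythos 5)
Your proposal is correct and follows essentially the same route as the paper's proof: assume no equality is attained at $T_*$, split the pairs $(x,y)$ into large separation (controlled by the uniform $L^\infty$ bound \eqref{eq rhoLinf}), pairs with a point near infinity (controlled by \eqref{eq NRDecay}, the concavity of $\omega_\lambda$ and the choice of $\lambda$), small separation inside a compact set (Lemma \ref{lem MudLem}), and the remaining compact region (uniform gap by compactness), then propagate strictness to $[T_*,T_*+h]$ by time continuity, contradicting \eqref{eq T_*}. The one point to tighten is that Lemma \ref{lem MudLem} gives $\|\nabla\rho^\pm(T_*)\|_{L^\infty(B_r)}<\omega_\lambda'(0)$ only on bounded balls $B_r$, not globally, so it must be applied on the fixed ball singled out by the far-field step (the far-field pairs at small separation being covered already by \eqref{eq NRDecay} together with concavity, so no lower bound on their separation is needed); this is exactly how the paper arranges the argument.
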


\begin{proof}[Proof of Lemma \ref{lem scenario}]
  It is clear to see that for every $t<T_*$, $\rho^\pm(t)$ strictly obeys the MOC $\omega_\lambda$, and from the time continuity
of $\rho^\pm(t)$, we have that for every $x,y\in\mathbb{R}^2$,
\begin{equation}\label{eq rhoFact}
  |\rho^\pm(T_*,x)-\rho^\pm(T_*,y)|\leq \omega_\lambda(|x-y|).
\end{equation}
Then for every
$x,y\in\mathbb{R}^2$, $x\neq y$, define
\begin{equation*}
  F^\pm(t,x,y)\triangleq \frac{|\rho^\pm(t,x)-\rho^\pm(t,y)|}{\omega_\lambda(|x-y|)},\qquad \forall t\in ]0,T^*[.
\end{equation*}
Obviously, $F^\pm(T_*,x,y)\leq 1$. We assume that $F^\pm(T_*,x,y)<1$ for all $x\neq y\in\mathbb{R}^2$, since otherwise
the claim follows.

First, denote $\overline{C}_0\triangleq\omega_\lambda^{-1}\big(3\|(\theta^+_0,\theta^-_0)\|_{L^{\infty,1}_{x_2,x_1}}\big)
=\lambda^{-1}\omega^{-1}\big(3\|(\theta^+_0,\theta^-_0)\|_{L^{\infty,1}_{x_2,x_1}}/\lambda^{\alpha-1}\big)$,
and we find that for every $x,y$ satisfying $|x-y|\geq \overline{C}_0$,
\begin{equation*}
  2\|\theta^\pm_0\|_{L^{\infty,1}_{x_2,x_1}}\leq \frac{2}{3}\omega_\lambda(\overline{C}_0)\leq \frac{2}{3}\omega_\lambda(|x-y|).
\end{equation*}
Thus by \eqref{eq rhoLinf}, we have for every $t\in ]0,T^*[$ and $x,y$ satisfying $|x-y|\geq \overline{C}_0$,
\begin{equation}
  |\rho^\pm(t,x)-\rho^\pm(t,y)|\leq 2\|\theta^\pm_0\|_{L^{\infty,1}_{x_2,x_1}} \leq \frac{2}{3} \omega_\lambda(|x-y|).
\end{equation}
Second, we consider the case of $x,y$ near infinity. From the mean value theorem, we get
for every $t\in]0,T^*[$ and for every $x,y$ satisfying that
$0<|x-y|\leq\overline{C}_0$ and $x$ or $y$ belongs to $B^c_{R+\overline{C}_0}$ with $R>0$ fixed later,
\begin{equation*}
  |\rho^\pm(t,x)-\rho^\pm(t,y)|\leq \|\nabla\rho^\pm\|_{L^\infty([0,t]; L^\infty(B^c_R))} |x-y|.
\end{equation*}
By the concavity of $\omega$ and $|x-y|\leq \overline{C}_0$, we find that
\begin{equation*}
  \lambda\,\frac{3\|(\theta^+_0,\theta^-_0)\|_{L^{\infty,1}_{x_2,x_1}}}
  {\omega^{-1}(3\|(\theta^+_0,\theta^-_0)\|_{L^{\infty,1}_{x_2,x_1}}/\lambda^{\alpha-1})}
  =\frac{\omega_\lambda(\overline{C}_0)}{\overline{C}_0} \leq \frac{\omega_\lambda(|x-y|)}{|x-y|}.
\end{equation*}
In order to make
\begin{equation*}
  \|\nabla\rho^\pm\|_{L^\infty([0,t]; L^\infty(B^c_R))}\leq \frac{\lambda}{2}
  \frac{3\|(\theta^+_0,\theta^-_0)\|_{L^{\infty,1}_{x_2,x_1}}}
  {\omega^{-1}(3\|(\theta^+_0,\theta^-_0)\|_{L^{\infty,1}_{x_2,x_1}}/\lambda^{\alpha-1})},
\end{equation*}
from $\lambda\geq \frac{\widetilde{C}_0}{\|(\theta_0^+,\theta_0^-)\|_{L^{\infty,1}_{x_2,x_1}}}\|(\nabla\rho^+_0,\nabla\rho^-_0)\|_{L^\infty}$
and \eqref{eq lamdCod1}, it suffices to choose $R$ such that
\begin{equation}\label{eq key1}
  \|\nabla\rho^\pm\|_{L^\infty([0,t]; L^\infty(B^c_R))}\leq \frac{3}{2}\|(\nabla\rho^+_0,\nabla\rho^-_0)\|_{L^\infty}.
\end{equation}
This estimate can be guaranteed by \eqref{eq NRDecay}, and we denote the chosen number by $R(t)$.
Thus we obtain that for every
$x,y$ satisfying that $0<|x-y|\leq\overline{C}_0$ and $x$ or $y$ belongs to $B^c_{R(t)+\overline{C}_0}$,
\begin{equation*}
  |\rho^\pm(t,x)-\rho^\pm(t,y)|\leq \frac{1}{2}\omega_\lambda(|x-y|), \quad \forall t\in ]0,T^*[.
\end{equation*}
In particular, there exists a number $h_1>0$ such that for every $x,y$ satisfying that $0<|x-y|\leq\overline{C}_0$
and $x$ or $y$ belongs to $B^c_{R(T_*+h_1)+\overline{C}_0}$,
\begin{equation}
  |\rho^\pm(t,x)-\rho^\pm(t,y)|\leq \frac{1}{2}\omega_\lambda(|x-y|), \quad \forall t\in [T_*,T_*+h_1],
\end{equation}
Next we reduce to consider the case that $x,y\in B_{R(T_*+h_1)+\overline{C}_0}$ and $0<|x-y|\leq \overline{C}_0$.
Since \eqref{eq OmeC} and $\rho^\pm(T_*)\in W^{2,\infty}$, from Lemma \ref{lem MudLem} we get that
\begin{equation*}
  \|\nabla\rho^\pm(T_*)\|_{L^\infty(B_{R(T_*+h_1)+\overline{C}_0})}< \omega_\lambda'(0)= \lambda^\alpha \omega'(0).
\end{equation*}
From $\rho^\pm\in C([0,T^*[; W^{1,\infty})$, there exists small constants $h_2,\tilde{\delta}>0$ such that
for every $t\in[T_*,T_*+h_2]$,
\begin{equation*}
  \|\nabla\rho^\pm(t)\|_{L^\infty(B_{R(T_*+h_1)+\overline{C}_0})}\leq
  (1-\tilde{\delta})\lambda^\alpha \frac{\omega(\tilde{\delta})}{\tilde{\delta}}.
\end{equation*}
Thus for every $x,y\in B_{R(T_*+h_1)+\overline{C}_0}$ satisfying $0<\lambda|x-y|\leq \tilde{\delta}$, from that
\begin{equation*}
  \frac{\lambda^{\alpha-1}\omega(\tilde\delta)}{\tilde\delta}\leq \frac{\omega_\lambda(|x-y|)}{\lambda|x-y|},
\end{equation*}
we obtain
\begin{equation}
\begin{split}
  |\rho^\pm(t,x)-\rho^\pm(t,y)| & \leq \|\nabla\rho^\pm(t)\|_{L^\infty(B_{R(T_*+h_1)+\overline{C}_0})}|x-y| \\
  & \leq (1-\tilde\delta)\omega_\lambda(|x-y|), \qquad \quad\forall t\in[T_*,T_*+h_2].
\end{split}
\end{equation}
Now it remains to treat the case that the continuous function $F^\pm(t,x,y)$ on the compact set
\begin{equation*}
  \mathcal{K}:= \{(x,y)\in\mathbb{R}^2\times\mathbb{R}^2; \max\{|x|,|y|\}
  \leq R(T_*+h_1)+\overline{C}_0,\,\,|x-y|\geq \tilde\delta/\lambda \}.
\end{equation*}
By virtue of $F^\pm(T_*,x,y)<1$ for all $(x,y)\in\mathcal{K}$, we have that there exist small constants $h_3,\bar\delta>0$ such that
\begin{equation}
  F^\pm(t,x,y)\leq 1-\bar\delta, \qquad \forall \, (t,x,y)\in [T_*,T_*+h_3]\times \mathcal{K}.
\end{equation}
Set $h\triangleq \min\{h_1,h_2,h_3 \}>0$, then by gathering the above estimates, we know that $\rho^\pm(T_*+h)$
strictly obeys the MOC $\omega_\lambda$ and this clearly contradicts with the definition of $T_*$.

\end{proof}

Now we shall show that this scenarios $(i)$-$(iii)$ can not happen. More precisely, we shall prove
\begin{equation}\label{eq REDUCT}
\begin{cases}
  \textrm{for}\; (i),&\quad (f^+)'(T_*)<0,\qquad \textrm{with}\;\; f^+(T_*)=\rho^+(T_*,x^+)-\rho^+(T_*,y^+), \\
  \textrm{for}\; (ii),&\quad (f^-)'(T_*)<0, \qquad \textrm{with}\;\; f^-(T_*)=\rho^-(T_*,x^-)-\rho^-(T_*,y^-), \\
  \textrm{for}\; (iii),&\quad (f^\pm)'(T_*)<0, \qquad \textrm{with}\;\; f^\pm(T_*)=\rho^\pm(T_*,x^\pm)-\rho^\pm(T_*,y^\pm).
\end{cases}
\end{equation}
Clearly, this means that for some $t<T_*$, the strict MOC $\omega_\lambda$ is lost by $\rho^+(t)$ or $\rho^-(t)$,
and this contradicts the definition of $T_*$.

Since $\rho^\pm$ solves the equation \eqref{eq 1} in the classical pointwise sense, we directly have
\begin{equation*}
\begin{split}
  \partial_t\rho^\pm(T_*,x^\pm)-\partial_t\rho^\pm(T_*,y^\pm)
  = & - u^\pm \cdot\nabla\rho^\pm(T_*,x^\pm)+ u^\pm\cdot\nabla\rho^\pm(T_*,y^\pm) + \\
  & + [- |D|^\alpha]\rho^\pm(T_*,x^\pm)- [- |D|^\alpha]\rho^\pm(T_*,y^\pm)
\end{split}
\end{equation*}
with
\begin{equation*}
  u^\pm=\pm(\mathcal{R}_1^2 \mathcal{R}_2^2(\rho^+-\rho^-),0).
\end{equation*}
Taking advantage of Lemma \ref{lem MudRR}, \ref{lem MudDiss} and the change of variable,
we find that
\begin{equation*}
\begin{cases}
  \textrm{for}\; (i),&\quad (f^+)'(T_*)\leq \lambda^{2\alpha-1}(\Omega \omega'+ \Psi_\alpha)(\lambda \xi^+), \\
  \textrm{for}\; (ii), &\quad (f^-)'(T_*)\leq \lambda^{2\alpha-1}(\Omega \omega' + \Psi_\alpha)(\lambda \xi^-), \\
  \textrm{for}\; (iii), &\quad (f^\pm)'(T_*) \leq \lambda^{2\alpha-1}(\Omega\omega' +\Psi_\alpha)(\lambda \xi^\pm),
\end{cases}
\end{equation*}
with $\Omega$ and $\Psi_\alpha$ defined by \eqref{eq Omega} and \eqref{eq Psi} respectively.

Next we construct appropriate moduli of continuity satisfying \eqref{eq OmeC} in the spirit of \cite{KisNV}.
Let $0<\gamma<\delta<1$ be two absolute constants chosen later, then for every $\alpha\in [1,2]$,
we define the following continuous functions that for $\alpha=1$
\begin{equation}\label{eq MOC}
\mathrm{MOC}_1\,\begin{cases}
  \omega(\xi)=\, \xi-\xi^{3/2},  \quad & \textrm{for}\;\, \xi \in [0,\delta],\\
  \omega'(\xi)=\frac{\gamma}{\xi(4+\log(\xi/\delta))},\quad & \textrm{for}\;\; \xi\in ]\delta,\infty[,
\end{cases}
\end{equation}
and for $\alpha\in ]1,2]$
\begin{equation}\label{eq MOCal}
\mathrm{MOC}_\alpha\,\begin{cases}
  \omega(\xi)=\, \xi-\xi^{3/2},  \quad & \textrm{for}\;\, \xi \in [0,\delta],\\
  \omega'(\xi)=0,\quad & \textrm{for}\;\; \xi\in ]\delta,\infty[,
\end{cases}
\end{equation}
Notice that, for small $\delta$, we have $\omega'(\delta-)\approx 1$, while $\omega'(\delta+)\leq \frac{1}{4}$, thus
$\omega$ is a concave piecewise $C^2$ function if $\delta$ is small enough. Obviously, $\omega(0)=0$,
$\omega'(0)=1$ and $\omega''(0+)=-\infty$. For $\alpha=1$, $\omega$ is unbounded near infinity, and for $\alpha\in ]1,2]$,
$\omega$ is a bounded function with maximum $\delta-\delta^{3/2}$ (in \eqref{eq lambda}, we can choose
$c_0=\delta-\delta^{3/2}$ and $\xi_0=\delta$).

Then our target is to prove that for suitable MOC given by \eqref{eq MOC} and \eqref{eq MOCal},
\begin{equation}\label{eq TARGET}
  \Omega(\xi) \omega'(\xi) + \Psi_\alpha(\xi)<0,
\end{equation}
for all $0<\xi\leq \lambda\overline{C}_0 =\lambda\omega_\lambda^{-1}(3\|(\theta_0^+,\theta^-_0)\|_{L^{\infty,1}_{x_2,x_1}})$,
more precisely,
\begin{equation*}
\begin{split}
  \Big(A_1 \omega(\xi)  + A_2\int_0^\xi \frac{\omega(\eta)}{\eta}\mathrm{d}\eta
  + A_2 \xi \int_\xi^\infty \frac{\omega(\eta)}{\eta^2}\mathrm{d}\eta \Big)\omega'(\xi)
  + \Psi_\alpha(\xi) <0, \qquad \forall \,\xi\in ]0, \lambda\overline{C}_0].
\end{split}
\end{equation*}
Note that from \eqref{eq aaa}, we know $\lambda\overline{C}_0\leq \delta$ for $\alpha\in ]1,2]$.

We divide into two cases.
\vskip0.1cm
Case 1: $\alpha\in[1,2]$ and $0<\xi\leq \delta$.
\vskip0.1cm
Since $\frac{\omega(\eta)}{\eta}\leq \omega'(0)=1$ for all $\eta>0$, we have
$\int_0^\xi \frac{\omega(\eta)}{\eta}\textrm{d}\eta \leq \xi$
and $\int_\xi^\delta \frac{\omega(\eta)}{\eta^2}\textrm{d}\eta \leq \int_\xi^\delta \frac{1}{\eta}\textrm{d}\eta= \log(\delta/\xi).$
Further,
\begin{equation*}
  \begin{cases}
  \int_\delta^\infty \frac{\omega(\eta)}{\eta^2}\textrm{d}\eta= \frac{\omega(\delta)}{\delta}+
  \int_\delta^\infty\frac{\gamma}{\eta^{2}(4+\log(\eta/\delta))}\textrm{d} \eta
  \leq 1+ \frac{\gamma}{4 \delta } \leq 2, \quad &\mathrm{for}\;\,\alpha=1, \\
  \int_\delta^\infty \frac{\omega(\eta)}{\eta^2}\textrm{d}\eta\leq \int_\delta^\infty \frac{\delta}{\eta^2} \mathrm{d}\eta = 1.
  \quad &\mathrm{for}\;\, \alpha \in ]1,2],
\end{cases}
\end{equation*}
Obviously $\omega'(\xi)\leq\omega'(0)=1$, so we get that the positive
part is bounded by $\xi(A_1+3A_2+ A_2\log(\delta/\xi))$.

For the negative part, we have $\omega''(\xi)=-\frac{3}{4}\xi^{-\frac{1}{2}}<0$, and
\begin{equation*}
\begin{split}
 \int_0^{\frac{\xi}{2}}\frac{\omega(\xi+2\eta)+\omega(\xi-2\eta)-2\omega(\xi)}{\eta^{1+\alpha}}\textrm{d}
 \eta \leq \int_0^{\frac{\xi}{2}}\frac{\omega''(\xi)2\eta^{2}}{\eta^{1+\alpha}}\textrm{d}\eta
 \leq -\frac{3}{4}\xi^{\frac{3}{2}-\alpha},\quad \mathrm{for}\;\,\alpha\in [1,2[.
\end{split}
\end{equation*}

Hence by choosing $\delta$ small enough, we have for all $\xi\in]0,\delta],$
\begin{equation*}
\begin{cases}
  \xi\big( A_1+3A_2 +A_2\log(\delta/\xi) -\frac{3 B_\alpha}{4} \xi^{\frac{1}{2}-\alpha}\big)<0,\quad &  \mathrm{for}\;\,\alpha\in [1,2[, \\
  \xi\big( A_1+2A_2 +A_2\log(\delta/\xi) -\frac{3 }{4} \xi^{-\frac{3}{2}}\big)<0,\quad &  \mathrm{for}\;\,\alpha=2.
\end{cases}
\end{equation*}

\vskip0.1cm
Case 2: $\alpha=1$ and $\xi\geq \delta$.
\vskip0.1cm

To show \eqref{eq TARGET}, this is almost identical to the corresponding part of \cite{KisNV}, and it suffices to choose $\gamma$
small enough; we here omit the details.

Therefore, \eqref{eq REDUCT} holds, and it implies that $T_*=T^*$. Moreover, for every $t\in [0,T^*[$, we have
$\|\nabla\rho^\pm(t)\|_{L^\infty}\leq \omega'_\lambda(0)=\lambda^\alpha$ with $\lambda$ defined by \eqref{eq lambda}.
This estimate combining with the breakdown criterion \eqref{eq BlowC} yields that $T^*=\infty$.

\section{Appendix}
\setcounter{section}{6}\setcounter{equation}{0}

In this section, we consider the Groma-Balogh model with generalized dissipation
\begin{equation}\label{eq App1}
\begin{cases}
  \partial_t \rho^+ + u \cdot\nabla \rho^+ +  |D|^\alpha \rho^+ =0, \quad \alpha\in ]0,2], \\
  \partial_t \rho^- - u \cdot\nabla \rho^- +  |D|^\beta \rho^- = 0, \quad \beta\in ]0,2],\\
  u=\big(\mathcal{R}_1^2 \mathcal{R}_2^2 (\rho^+ -\rho^-),0\big), \\
  \rho^+|_{t=0}=\rho^+_0,\quad \rho^-|_{t=0}=\rho^-_0.
\end{cases}
\end{equation}
In terms of the dislocation densities $\theta^\pm\triangleq \partial_1\rho^\pm$, we write
\begin{equation}\label{eq App2}
\begin{cases}
  \partial_t \theta^+ + \partial_1(u_1\, \theta^+)
  +  |D|^\alpha \theta^+ =0, \quad \alpha\in ]0,2], \\
  \partial_t \theta^- - \partial_1 (u_1\, \theta^-)
  +  |D|^\beta \theta^- = 0,\quad \beta\in ]0,2], \\
  u_1=\mathcal{R}_1 \mathcal{R}_2^2 |D|^{-1}(\theta^+-\theta^-), \\
  \theta^+|_{t=0}=\theta^+_0,\quad \theta^-|_{t=0}=\theta^-_0.
\end{cases}
\end{equation}

Similarly as Theorem \ref{thm glob}, we get the following global result in the subcritical regime.
\begin{proposition}\label{prop glob-g}
  Let $(\alpha,\beta)\in ]1,2]^2$, $\alpha\neq \beta$, $(\theta^+_0,\theta^-_0)\in H^m(\mathbb{R}^2)\cap L^p(\mathbb{R}^2) \cap L^{\infty,1}_{x_2,x_1}(\mathbb{R}^2)$ with $m>4$, $p\in ]1,2[$
be composed of non-negative real scalar functions. Assume that $\rho^\pm_0(x_1,x_2)=\int_{-\infty}^{x_1}\theta^\pm_0(\tilde{x}_1,x_2)\mathrm{d}
\tilde{x}_1$ satisfy that for each $k=1,2,3$, $\partial_2^k \rho^\pm_0\in L_x^\infty(\mathbb{R}^2)$ and
$\lim_{x_1\rightarrow-\infty}\partial_2^k \rho^\pm_0(x)=0$ for every $x_2\in\mathbb{R}$.
Then there exists a unique global solution
\begin{equation*}
  (\theta^+,\theta^-)\in C([0,\infty[;H^m\cap L^p)\cap L^\infty([0,\infty[;L^{\infty,1}_{x_2,x_1})
\end{equation*}
to the equation \eqref{eq App2}. Moreover, $(\rho^+,\rho^-)\in L^\infty([0,\infty[; W^{3,\infty})\cap C([0,\infty[;W^{1,\infty})$ solves
the equation \eqref{eq App1} in the classical pointwise sense.
\end{proposition}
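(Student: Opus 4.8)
The plan is to mirror the argument of Theorem \ref{thm glob} in Section \ref{sec glob}, the only genuinely new feature being the two distinct dissipation exponents $\alpha\neq\beta$ acting on $\rho^+$ and $\rho^-$. First I would check that the local theory transfers verbatim: the $L^q$, $H^m$ and uniqueness estimates of Section \ref{sec local} treat the two transport equations separately, and each dissipation contributes only the favourable terms (the $II^\pm$ of Subsection \ref{subsec Apri}), so Theorem \ref{thm local} and its blowup criterion hold for \eqref{eq App2} with $(\alpha,\beta)$ in place of $(\alpha,\alpha)$. Likewise Proposition \ref{prop FurP} goes through: since $\alpha,\beta\in]1,2]\subset]\tfrac12,2]$, part (2) gives the conservation $\|\theta^\pm(t)\|_{L^{\infty,1}_{x_2,x_1}}\le\|\theta^\pm_0\|_{L^{\infty,1}_{x_2,x_1}}$ and the representation \eqref{eq Exp}; parts (1) and (3) give the non-negativity and the fact that $(\rho^+,\rho^-)\in L^\infty([0,T^*[;W^{3,\infty})\cap C([0,T^*[;W^{1,\infty})$ solves \eqref{eq App1} in the classical pointwise sense; and part (4) supplies the non-growth of the Lipschitz norm near infinity \eqref{eq NRDecay}. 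It therefore remains to bound $\int_0^{T^*}\|(\partial_1\rho^+,\partial_1\rho^-)\|_{L^\infty}\mathrm dt$ for every finite $T^*$.

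For the nonlocal maximum principle I would use a \emph{single} bounded modulus, but rescaled with the exponent of the weaker dissipation. Take $\omega$ exactly as in $\mathrm{MOC}_\alpha$, see \eqref{eq MOCal} (the subcritical profile $\omega(\xi)=\xi-\xi^{3/2}$ on $[0,\delta]$, $\omega'\equiv0$ beyond), set $s\triangleq\min\{\alpha,\beta\}-1\in]0,1[$ and $\omega_\lambda(\xi)\triangleq\lambda^{s}\omega(\lambda\xi)$. Because $s>0$, choosing $\lambda$ large forces $\omega_\lambda'(0)=\lambda^{s+1}$ and $\sup\omega_\lambda=\lambda^{s}(\delta-\delta^{3/2})$ to dominate $\|(\nabla\rho^+_0,\nabla\rho^-_0)\|_{L^\infty}$ and $3\|(\theta^+_0,\theta^-_0)\|_{L^{\infty,1}_{x_2,x_1}}$ respectively, so that $\rho^\pm_0$ strictly obey $\omega_\lambda$, exactly as in \eqref{eq lambda}. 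Defining the first breakdown time $T_*$ by \eqref{eq T_*} and repeating Lemma \ref{lem scenario} verbatim (its proof uses only \eqref{eq OmeC}, \eqref{eq rhoLinf} and the decay \eqref{eq NRDecay}, all of which persist), one reduces to the three touching scenarios (i)--(iii), with all touching distances $\xi^\pm\le\lambda\overline C_0\le\delta$.

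Next I would compute the breakthrough derivatives. Since $\rho^+$ and $\rho^-$ both obey $\omega_\lambda$, their difference obeys $2\omega_\lambda$, so by Lemma \ref{lem MudRR}(1) the velocity $u=(\mathcal R_1^2\mathcal R_2^2(\rho^+-\rho^-),0)$ obeys the modulus $\Omega_\lambda$ built from $\omega_\lambda$ through \eqref{eq Omega}; applying Lemma \ref{lem MudRR}(2) to the advection and Lemma \ref{lem MudDiss} to the dissipation, with exponent $\alpha$ for $\rho^+$ and exponent $\beta$ for $\rho^-$, yields
\begin{equation*}
(f^+)'(T_*)\le \Omega_\lambda(\xi^+)\,\omega_\lambda'(\xi^+)+\Psi^{\omega_\lambda}_\alpha(\xi^+),\qquad
(f^-)'(T_*)\le \Omega_\lambda(\xi^-)\,\omega_\lambda'(\xi^-)+\Psi^{\omega_\lambda}_\beta(\xi^-),
\end{equation*}
with $\Psi_\alpha,\Psi_\beta$ given by \eqref{eq Psi}. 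Using the scalings $\Omega_\lambda(\xi)\omega_\lambda'(\xi)=\lambda^{2s+1}(\Omega\omega')(\lambda\xi)$ and $\Psi^{\omega_\lambda}_\gamma(\xi)=\lambda^{s+\gamma}\Psi_\gamma(\lambda\xi)$, and writing $\zeta=\lambda\xi$, each right-hand side becomes $\lambda^{2s+1}\big[(\Omega\omega')(\zeta)+\lambda^{\gamma-1-s}\Psi_\gamma(\zeta)\big]$ with $\gamma=\alpha$ and $\gamma=\beta$ respectively.

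The crux, and the only place where $\alpha\neq\beta$ intervenes, is the choice $s=\min\{\alpha,\beta\}-1$, which makes both amplification exponents $\alpha-1-s=\alpha-\min\{\alpha,\beta\}\ge0$ and $\beta-1-s\ge0$ nonnegative, one of them vanishing. For the equation whose dissipation exponent equals $\min\{\alpha,\beta\}$ the bracket is exactly $(\Omega\omega'+\Psi_{\min\{\alpha,\beta\}})(\zeta)$, which is negative on $]0,\delta]$ by the computation of Case 1 in Section \ref{sec glob} (valid for any exponent $>\tfrac12$). For the other equation the dissipation is amplified by $\lambda^{|\alpha-\beta|}\ge1$; since $\Psi_\gamma<0$ this only helps, and the bracket is bounded above by the corresponding base inequality $(\Omega\omega'+\Psi_\gamma)(\zeta)<0$. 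Hence $(f^\pm)'(T_*)<0$ in every scenario, contradicting the definition of $T_*$ and forcing $T_*=T^*$. Consequently $\|\nabla\rho^\pm(t)\|_{L^\infty}\le\omega_\lambda'(0)=\lambda^{\min\{\alpha,\beta\}}$ uniformly on $[0,T^*[$, so $\int_0^{T^*}\|(\partial_1\rho^+,\partial_1\rho^-)\|_{L^\infty}\mathrm dt\le 2\lambda^{\min\{\alpha,\beta\}}T^*<\infty$ whenever $T^*<\infty$, and the blowup criterion gives $T^*=\infty$. The main obstacle is precisely reconciling the two dissipation scalings inside one modulus; the device $s=\min\{\alpha,\beta\}-1$ resolves it because amplifying a negative dissipation contribution never spoils its sign, so both preserved inequalities collapse to the already-established subcritical ones.
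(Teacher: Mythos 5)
Your proposal is correct and follows essentially the same route as the paper: transfer the local theory and Proposition \ref{prop FurP} to the two-exponent system, rescale a single bounded subcritical modulus by $\lambda^{\min\{\alpha,\beta\}-1}$ (the paper takes $1<\alpha<\beta\le 2$ WLOG and uses $\omega_\lambda(\xi)=\lambda^{\alpha-1}\omega(\lambda\xi)$ with $\lambda\ge1$), and observe that the stronger dissipation is amplified by $\lambda^{\beta-\alpha}\ge1$, which only improves the sign since $\Psi_\beta<0$, so both breakthrough inequalities reduce to the already-established subcritical estimates on $]0,\delta]$. The scaling identities, the choice of $\lambda$, and the conclusion via the Lipschitz bound and the blowup criterion all match the paper's argument.
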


\begin{remark}
  When $1=\alpha<\beta\leq 2$ or $1=\beta<\alpha\leq 2$, in a similar way we can obtain the same global result
under the condition that the norm $\|(\theta^+_0,\theta^-_0)\|_{L^{\infty,1}_{x_2,x_1}}$ is small enough.
\end{remark}

\begin{proof}[Proof of Proposition \ref{prop glob-g}]
  Note that Theorem \ref{thm local} and Proposition \ref{prop FurP} also hold for the systems \eqref{eq App1}
and \eqref{eq App2}, and it remains to show that for every $T\in ]0,T^*[$, there is an upper bound of the quantity
$\int_0^T \|(\partial_1\rho^+,\partial_1\rho^-)(t)\|_{L^\infty}\mathrm{d}t$.

With no loss of generality, we fix $1<\alpha<\beta\leq 2$ in the sequel. Let $\omega$ be an appropriate MOC chosen later, and denote
\begin{equation*}
  \omega_\lambda(\xi)=\lambda^{\alpha-1}\omega(\lambda \xi),\qquad \forall \xi>0.
\end{equation*}
Let $\lambda\geq 1$ be defined by \eqref{eq lambda} with $\alpha\in]1,2]$ (if the quantity in \eqref{eq lambda} is less than 1,
set $\lambda=1$), similarly as in Section \ref{sec glob}, we get $\rho^\pm_0$ strictly satisfy the MOC $\omega_\lambda$.
Let $T_*$ be defined by \eqref{eq T_*}, we also find that Lemma \ref{lem scenario} holds true, and it suffices to show that
\eqref{eq REDUCT} is satisfied.

From \eqref{eq App1} we have
\begin{equation*}
  (f^+)'(T_*) =  - u \cdot\nabla\rho^+(T_*,x^+)+ u\cdot\nabla\rho^+(T_*,y^+) +
  [- |D|^\alpha]\rho^+(T_*,x^+)- [- |D|^\alpha]\rho^+(T_*,y^+),
\end{equation*}
and
\begin{equation*}
  (f^-)'(T_*) =   u \cdot\nabla\rho^-(T_*,x^-)- u \cdot\nabla\rho^-(T_*,y^-) +
  [- |D|^\beta]\rho^-(T_*,x^-)- [- |D|^\beta]\rho^-(T_*,y^-),
\end{equation*}
with $u =(\mathcal{R}_1^2 \mathcal{R}_2^2(\rho^+-\rho^-),0)$.
By Lemma \ref{lem MudRR}, \ref{lem MudDiss} and the change of variable,
we obtain that
\begin{equation*}
\begin{cases}
  (f^+)'(T_*)&\leq \lambda^{2\alpha-1}\big(\Omega\omega'
  + \Psi_\alpha\big)(\lambda \xi^+),\qquad\qquad \textrm{for}\; (i),(iii),  \\
   (f^-)'(T_*)&\leq \lambda^{2\alpha-1}\big(\Omega \omega'
  + \lambda^{\beta-\alpha} \Psi_\beta\big)(\lambda \xi^-) \\
  & \leq \lambda^{2\alpha-1}\big(\Omega \omega'  + \Psi_\beta\big)(\lambda \xi^-),\qquad\qquad \textrm{for}\; (ii),(iii),
\end{cases}
\end{equation*}
where $\Omega$ is defined by \eqref{eq Omega} corresponding to $\omega$, and
$\Psi_\alpha$, $\Psi_\beta$ are defined by \eqref{eq Psi}.

Next we construct suitable modulus of continuity satisfying \eqref{eq OmeC}.
Let $0<\delta<1$ be a fixed constant chosen later, then for every $1<\alpha<\beta\leq 2$,
we define the following continuous function
\begin{equation}\label{eq MOC2}
\mathrm{MOC}\,\begin{cases}
  \omega(\xi) =\, \xi-\xi^{3/2},  \quad & \textrm{for}\;\, \xi \in [0,\delta],\\
  \omega'(\xi)=0, \quad & \textrm{for}\;\; \xi\in ]\delta,\infty[.
\end{cases}
\end{equation}

Then our target is to prove that for the suitable MOC given by \eqref{eq MOC2},
\begin{equation}\label{eq TARGET1}
  \Omega(\xi)\omega'(\xi) + \Psi_\alpha(\xi)<0,\qquad \forall \xi\in ]0,\delta],
\end{equation}
and
\begin{equation}\label{eq TARGET2}
  \Omega(\xi) \omega'(\xi) + \Psi_\beta(\xi)<0,\qquad \forall \xi\in ]0,\delta].
\end{equation}
Similarly as proving \eqref{eq TARGET}, for appropriate positive constants $\delta$ that may depend on $\alpha,\beta$,
we can show \eqref{eq TARGET1} and \eqref{eq TARGET2} are satisfied.

Therefore, we have $T_*=T^*$. Moreover, for every $t\in [0,T^*[$, we have
$\|\nabla\rho^\pm(t)\|_{L^\infty}\leq \lambda^\alpha$.
This combining with the breakdown criterion \eqref{eq BlowC} yields $T^*=\infty$.
\end{proof}

\textbf{Acknowledgements:} The authors would like to express their
deep gratitude to Prof. R\'egis Monneau for helpful discussion and
valuable suggestions on subcritical dissipative cases. Dong Li  was
supported in part by NSF Grant 0908032,  Changxing Miao  and Liutang
Xue  were partially supported by the NSF of China under grants
11171033 and No. 11101042.

%%%%%%%%%%%%%%%%%%%%%%%%%%%%%%%%%%%%%%%%%%%%%%%%%%%%%%%%%%%%%%%%%%%%%%%%%%%%%%%%%%%%%%%%%%%%%%%%%%%%%%%%%%%%%%%%%%%%%
%references

\end{document}